\newtheorem{theorem}{Theorem}
\newtheorem{remark}[theorem]{Remark}
\newtheorem{proposition}[theorem]{Proposition}
\newtheorem{lemma}[theorem]{Lemma}
\newtheorem{corollary}[theorem]{Corollary}
\newcommand{\B}{{\mathbb B}}
\newcommand{\R}{{\mathbb R}}
\newcommand{\HH}{\mathbb H}
\def\dt{\partial_{t}}
\def\div{{\rm div\, }}
\def\u{{\bf u}}
\def\B{{\bf B}}
\def\vphi{\varphi}
\def\eps{\varepsilon}
\def\th{\theta}
\begin{document}

\title[Current-vortex sheets]{Existence of approximate current-vortex sheets near the onset of instability}
\author[A.~Morando] {Alessandro Morando}
\address{DICATAM, Sezione di Matematica, \newline \indent
Universit\`a di Brescia,\newline \indent Via Valotti, 9, 25133 BRESCIA, Italy}
\email{alessandro.morando@unibs.it, paolo.secchi@unibs.it, paola.trebeschi@unibs.it}

\author[P.~Secchi]{Paolo Secchi}

\author[P.~Trebeschi]{Paola Trebeschi}

\date{\today}

\begin{abstract}
The paper is concerned with the free boundary problem for 2D current-vortex sheets in ideal incompressible
magneto-hydrodynamics near the transition point between the linearized stability and instability.
In order to study the dynamics of
the discontinuity near the onset of the instability, Hunter and Thoo \cite{hunter-thoo} have introduced an asymptotic quadratically nonlinear integro-differential equation for the amplitude of small perturbations of the planar discontinuity. The local-in-time existence of smooth solutions to the Cauchy problem for such amplitude equation was already proven in \cite{M-S-T:ONDE1}, under a suitable stability condition. However, the solution found there has a loss of regularity (of order two) from the initial data. In the present paper, we are able to obtain an existence result of solutions with {\it optimal regularity}, in the sense that the regularity of the initial data is preserved in the motion for positive times.
\end{abstract}

\subjclass[2010]{35Q35, 76E17, 76E25, 35R35, 76B03.}
\keywords{{Magneto-hydrodynamics, incompressible fluids, current-vortex sheets, interfacial stability and instability}}

\thanks{{The authors are supported by the national research project PRIN 2012 \lq\lq Nonlinear Hyperbolic Partial Differential Equations, Dispersive and Transport Equations: theoretical and applicative aspects\rq\rq.}}

\maketitle

\section{Introduction and main results}\label{int}

Consider the equations of 2-dimensional incompressible magneto-hydrodynamics (MHD)
\begin{equation}
\label{mhd1}
\begin{cases}
\partial_t \u +\nabla \cdot
(\u \otimes \u-\B\otimes \B) +\nabla q =0 \, ,
\\
\partial_t \B -\nabla \times
(\u\times \B) =0 \, ,
\\
\div  \u=0\, ,\;\div \B=0\, \qquad\qquad \text{in }(0,T)\times\R^2,
\end{cases}
\end{equation}
where
$\u=(u_1,u_2)$ denotes the velocity field
and
$\B=(B_1,B_2)$ the magnetic field,
$p$ is the
pressure, $q= p+\frac{1}{2}|\B|^2$ the
total pressure
(for simplicity the density $\rho\equiv1$).

Let us consider {current-vortex sheets} solutions of \eqref{mhd1} (also called \lq\lq tangential discontinuities\rq\rq), that is weak solutions that are smooth on either side of a smooth hypersurface
$$\Gamma(t)=\{y=f(t,x)\}, \qquad  \mbox{where } t\in[0,T], \, (x,y)\in\R^2,\;
$$
and such that at $\Gamma(t)$ satisfy the boundary conditions
\begin{equation}
\label{bc}
\dt f =\u^\pm \cdot N \, ,\quad \B^\pm \cdot N=0 \, ,\quad [q]=0  \, ,
\end{equation}
with $N:=(-\partial_x f, 1)$. In \eqref{bc} $(\u^\pm,\B^\pm,q^\pm)$ denote the values of $(\u,\B,q)$ on the two sides of $\Gamma (t)$, and
$[q]=q^+_{|\Gamma}-q^-_{|\Gamma}$ the jump across
$\Gamma (t)$.

From \eqref{bc} the discontinuity front $\Gamma (t)$
is a tangential discontinuity, namely the plasma does not flow through the discontinuity front and the magnetic field is tangent to $\Gamma (t)$.
The possible jump of the tangential velocity and tangential magnetic field gives a concentration of current and vorticity on the front $\Gamma (t)$.
{Current-vortex sheets are fundamental waves in MHD and play an important role in plasma physics and astrophysics. The existence of current-vortex sheets solutions is known for compressible fluids, see \cite{ChenWang,trakhinin09arma}, see also \cite{cmst,morandotrakhinintrebeschi} for the incompressible case.
%

The necessary and sufficient linear stability condition for planar (constant coefficients) current-vortex
sheets was found a long time ago, see \cite{axford,michael,syrovatskii}.
To introduce it, let us consider a stationary solution of \eqref{mhd1}, \eqref{bc} with interface located at $\{y=0\}$ given by the constant states
\begin{equation}
\begin{array}{ll}\label{constant}
\u^\pm=(U^\pm,0)^T, \qquad \B^\pm=(B^\pm,0)^T
\end{array}
\end{equation}
in the $x$-direction.
The {necessary} and {sufficient stability condition} for the stationary solution is
\begin{equation}
\label{syrovatskii}
|U^+-U^-|^2 < 2 \, \Big( |B^+| ^2+ |B^-|^2 \Big) \, ,
\end{equation}
see \cite{axford,michael,syrovatskii}.
Equality in \eqref{syrovatskii} corresponds to the transition to
{\it violent} instability, i.e. {ill-posedness} of the linearized problem.

Let $U=(U^+,U^-), B=(B^+,B^-)$ and define
\[
\Delta(U,B):= \frac12 \, \Big( |B^+| ^2+ |B^-|^2 \Big) -\frac14|U^+-U^-|^2.
\]
According to \eqref{syrovatskii}, stability/instability occurs when $\Delta(U,B)\gtrless0$.

Hunter and Thoo investigated in \cite{hunter-thoo} the transition to instability when $\Delta(U,B)=0$.
Assume that $U^\pm,B^\pm$ depend on a small positive parameter $\eps$ and
\[
U^\pm=U^\pm_0+\eps U^\pm_1+O(\eps^2), \qquad
B^\pm=B^\pm_0+\eps B^\pm_1+O(\eps^2)
\]
as $\eps\to0^+$, where
\[
\Delta(U_0,B_0)=0\,.
\]
Then
\begin{equation}
\begin{array}{ll}\label{Delta}

\Delta(U,B)=\eps\mu+O(\eps^2)
\end{array}
\end{equation}
as $\eps\to0^+$, where
\[
\mu=B^+_0B^+_1+B^-_0B^-_1 - \frac12\left( U^+_0-U^-_0  \right)\left( U^+_1-U^-_1  \right)
\,.
\]
From \eqref{Delta}, $\mu$ plays the role of a scaled bifurcation parameter:
for small $\eps>0$, if $\mu>0$ the stationary solution \eqref{constant} is linearly stable,
while if $\mu<0$, it is linearly unstable.

It is proved in \cite{hunter-thoo} that the perturbed location of the interface  has the asymptotic expansion
\begin{equation*}
\begin{array}{ll}\label{}
y=f(t,x;\eps)=\eps\vphi( \tau,\th ) + O(\eps^{3/2})  \qquad \mbox{as } \eps\to0^+,
\end{array}
\end{equation*}
where $\tau=\eps^{1/2} t$ is a \lq\lq slow\rq\rq time variable and
$\th=x-\lambda_0 t$  is a new spatial variable in a reference frame moving with the surface wave, $\lambda_0=( U^+_0+U^-_0  )/2$.

As shown in \cite{hunter-thoo}, after a rescaling, and writing again $(t,x)$ for $(\tau,\th)$, the first order term $\vphi$ satisfies the quadratically nonlinear amplitude equation
\begin{equation}\label{onde_integro_diff}
\varphi_{tt}-\mu\varphi_{xx}=\left(\frac12\mathbb H[\phi^2]_{xx}+\phi\varphi_{xx}\right)_{\!\!x}\,,\qquad\phi=\mathbb H[\varphi]\,,
\end{equation}
where the unknown is the scalar function $\varphi=\varphi(t,x)$, whereas $\mathbb H$ denotes the Hilbert transform with respect to $x$.

Equation \eqref{onde_integro_diff} is an integro-differential evolution equation in one space dimension, with quadratic nonlinearity.
This is a nonlocal equation of order two: in fact, it may also be written as
\begin{equation}
\begin{array}{ll}\label{equ1bisbis}
\vphi_{tt}-\mu\vphi_{xx} = \left(  [ \HH;\phi
]\partial_x  \phi
_{x} + \HH[\phi
^2_x]\right)_x \,,
\end{array}
\end{equation}
where $[ \HH;\phi
]\partial_x$ is a pseudo-differential operator of order zero. This alternative form \eqref{equ1bisbis} shows that \eqref{onde_integro_diff} is an equation of second order, due to a cancelation of the third order spatial derivatives appearing in \eqref{onde_integro_diff}.

Equation \eqref{onde_integro_diff} also admits the alternative spatial form
\begin{equation}\label{equ1terter}
\begin{split}
\varphi_{tt}-\left(\mu-2\phi_x\right)\varphi_{xx}+\mathcal Q\left[\varphi\right]=0\,,
\end{split}
\end{equation}
where
\begin{equation}\label{termine_nl0}
\mathcal Q\left[\varphi\right]:=-3\left[\mathbb H\,;\,\phi_x\right]\phi_{xx}-\left[\mathbb H\,;\,\phi\right]\phi_{xxx}\,.
\end{equation}
The alternative form \eqref{equ1terter} puts in evidence the difference $\mu-2\phi_x$ which has a meaningful role. In fact it can be shown that the linearized operator about a given basic state is elliptic and \eqref{onde_integro_diff} is locally linearly ill-posed in points where
\begin{equation}
\label{extstab2}
\mu-2\phi_x<0.
\end{equation}
On the contrary, in points where
\begin{equation}
\begin{array}{ll}\label{extstab}

\mu-2\phi
_{x}>0

\end{array}
\end{equation}
the linearized operator is hyperbolic and \eqref{onde_integro_diff} is locally linearly well-posed, see \cite{hunter-thoo}. In this case we can think of  \eqref{equ1terter} as a nonlinear perturbation of the wave equation.

If $\mu>0$ (linear stability) but condition \eqref{extstab2} holds, we see that the nonlinearity in \eqref{onde_integro_diff} does not regularize the Hadamard instability of the linearized equation, as we could hope. Instead, it has a destabilizing effect. Specifically, under \eqref{extstab2} the equation \eqref{onde_integro_diff} is locally linearly unstable, and an initially linearly stable solution evolves into an unstable regime.

In \cite{hunter-thoo} the reader may also find an heuristic, physical explanation of condition \eqref{extstab2} yielding the linearized ill-posedness, given in terms of a longitudinal strain of the fluid along the discontinuity.
The analysis shows that there is a compression of material particles on the part of the discontinuity where the magnitude of the magnetic field is larger, and expansion of material particles on the opposite part of the discontinuity where the magnitude of the magnetic field is smaller. This typically corresponds to formation of a \lq\lq finger" in the discontinuity that extends into the half-space with the weaker magnetic field. Thus, the loss of stability of the discontinuity is associated with its movement toward the side with the less-stabilizing magnetic field.

It is interesting to observe that the same quadratic operator of \eqref{onde_integro_diff} appears in the first order nonlocal amplitude equation
\begin{equation}\label{amplie}
\varphi_{t}+\frac12\mathbb H[\phi^2]_{xx}+\phi\varphi_{xx}=0\,,\qquad\phi=\mathbb H[\varphi]\,,
\end{equation}
for nonlinear Rayleigh waves  \cite{hamilton-et-al} and surface waves on current-vortex sheets and plasma-vacuum interfaces in incompressible MHD \cite{ali-hunter,ali-hunter-parker,secchi-quart}.
{Equation \eqref{amplie} is considered a canonical model equation for nonlinear surface wave solutions of hyperbolic conservation laws, analogous to the inviscid Burgers equation for bulk waves.}


\bigskip


In this paper we are mainly interested in the nonlinear well-posedness of \eqref{onde_integro_diff} under assumption \eqref{extstab}. More specifically, we will study the local-in-time existence of solutions to the initial value problem for  \eqref{onde_integro_diff}  with sufficiently smooth initial data
\begin{equation}\label{id}
\varphi_{\vert\,t=0}=\varphi^{(0)}\,,\qquad \partial_t\varphi_{\vert\,t=0}=\varphi^{(1)}\,,
\end{equation}
satisfying the following {\it stability condition}
\begin{equation}\label{extstab_id}
\mu-2\phi^{(0)}_{x}>0\,,\qquad\phi^{(0)}:=\mathbb H[\varphi^{(0)}]\,.
\end{equation}

For the sake of convenience, in the paper the unknown $\varphi=\varphi(t,x)$ is a scalar function of  the time $t\in\mathbb R^+$  and the space variable $x$, ranging on the one-dimensional torus $\mathbb T$ (that is $\varphi$ is periodic in $x$). 

Throughout the rest of the paper we will be only interested to seek solutions $\varphi=\varphi(t,x)$ of the equation \eqref{onde_integro_diff} with zero spatial mean, that is $\int_{\mathbb T}\varphi(t,x)\,dx=0$; thus from now on we assume that the initial data $\varphi^{(0)}$, $\varphi^{(1)}$ have zero spatial mean\footnote{It is straightforward to check that every (sufficiently smooth) solution $\varphi=\varphi(t,x)$ to \eqref{onde_integro_diff} keeps spatial mean equal to zero along its time evolution, provided that $\varphi$ and $\partial_t\varphi$ have zero spatial mean at the initial time $t=0$.}. This does not make a restriction of our analysis, since the solvability of the initial value problem for \eqref{onde_integro_diff}, with general initial data, follows at once from the solvability in the case of zero spatial mean initial data.

\begin{remark}\label{rmk:1}
{\rm Let us notice that the periodicity of $\varphi^{(0)}$ implies that $\varphi^{(0)}_x$ has spatial mean equal to zero; since $\varphi^{(0)}$ and $\varphi^{(0)}_x$ are also real-valued then $\phi^{(0)}_x=\mathbb H[\varphi^{(0)}_x]$ is still real-valued with zero spatial mean (see the results collected in the next sections \ref{molt_fourier}, \ref{trasf_hilbert}). Therefore $\phi^{(0)}_x$ (if not identically zero\footnote{Because the spatial mean of $\varphi^{(0)}$ is zero, $\phi^{(0)}_x$ identically zero should imply that $\varphi^{(0)}$ is identically zero too.}) should attain either positive or negative values; consequently inequality \eqref{extstab_id} yields $\mu>0$ (providing {\it linear stability} of \eqref{onde_integro_diff}), while the opposite inequality implies $\mu<0$ (which gives {\it linear instability}). It is therefore somehow natural to regard \eqref{extstab_id} as a {\it stability condition}, under which we investigate the {\it nonlinear well-posedness} of the equation \eqref{onde_integro_diff}.}
\end{remark}

In \cite{M-S-T:ONDE1}, we proved a result of local-in-time existence of the initial value problem for \eqref{onde_integro_diff}, under the following ``uniform counterpart'' of condition \eqref{extstab_id}
\begin{equation}\label{extstab_unif}
\mu-2\phi^{(0)}_{x}\ge\delta\,,\qquad\mbox{on}\,\,\mathbb T\,,
\end{equation}
being $\delta$ some positive constant, see Theorem \ref{nonlin_th} below. For $\phi^{(0)}_x$ has zero spatial mean, arguing as before we first deduce from \eqref{extstab_unif} that $\mu>\delta$. Moreover \eqref{extstab_unif} must be understood as a {\it smallness} assumption on the size of the initial data $\varphi^{(0)}$ in \eqref{id}. One can immediately see that \eqref{extstab_unif} is satisfied whenever the $L^\infty-$norm of $\varphi^{(0)}$ is bounded by
\begin{equation}\label{Linfty}
\Vert \phi^{(0)}_x\Vert_{L^\infty}\le\frac{\mu-\delta}{2}\,.
\end{equation}
On the other hand, from the Sobolev imbedding $H^1(\mathbb T)\hookrightarrow L^\infty(\mathbb T)$) and the Sobolev continuity of the Hilbert transform (cf. Section \ref{trasf_hilbert}), \eqref{Linfty} follows, in its turn, when the $H^1-$norm of $\varphi^{(0)}_x$ is supposed to be sufficiently small, see the next Lemma \ref{lemma_R0} for details.

In \cite{M-S-T:ONDE1} (see Theorem \ref{nonlin_th} in Section \ref{sec_ex_uniq}) we already proved the existence of the solution to the initial value problem for \eqref{onde_integro_diff}. The proof follows from a careful analysis of the linearized problem and application of the Nash-Moser theorem.
However, the result of \cite{M-S-T:ONDE1} is not completely satisfactory because the approach employed there gives a loss of regularity (of order two) from the initial data to the found solution. In the present paper, we are able to provide an existence result for the initial value problem with {\it optimal regularity}, in the sense that the regularity of the initial data is preserved in the motion for positive times; in fact the solution is continuous in time with values in the same function spaces of the initial data.
In a forthcoming paper we will study the continuous dependence in strong norm of solutions on the initial data, so to complete the proof of the well-posedness of \eqref{onde_integro_diff} in the classical sense of Hadamard, after existence and uniqueness.

The main result of the present paper is the following.

\begin{theorem}\label{th_esistenza}
\begin{itemize}
\item [(1)] For $0<\delta<\mu$ given there exist $0<R_0\le1$ and positive constants $C_1$, $C_2$ such that for all $\varphi^{(0)}\in H^{3}(\mathbb T)$, $\varphi^{(1)}\in H^{2}(\mathbb T)$ with zero spatial mean satisfying
\begin{equation}\label{ip_dati_iniziali}
\Vert\varphi^{(0)}_x\Vert^2_{H^2}+\Vert\varphi^{(1)}\Vert^2_{H^2}<R_0^2\,,
\end{equation}
and for
\begin{equation}\label{T0_nuova}
T_0:=C_1\left\{\Vert\varphi^{(0)}_x\Vert^2_{H^2(\mathbb T)}+\Vert\varphi^{(1)}\Vert^2_{H^2(\mathbb T)}\right\}^{-1/2}\,,
\end{equation}
there exists a unique solution $\varphi\in C([0, T_0); H^{3}(\mathbb T))\cap C^1([0, T_0); H^{2}(\mathbb T))$ of the Cauchy problem for \eqref{onde_integro_diff} with initial data $\varphi^{(0)}$, $\varphi^{(1)}$, with zero spatial mean, satisfying
\begin{equation*}
\mu-2\mathbb{H}[\varphi]_x\ge\delta\,,\quad\mbox{on}\,\,[0,T_0)\times\mathbb T
\end{equation*}
and the energy estimate
\begin{equation}\label{stima_energia_2}
\Vert\varphi(t)\Vert_{H^3(\mathbb T)}^2+\Vert\varphi_t(t)\Vert_{H^2(\mathbb T)}^2\le C_2\left\{\Vert\varphi^{(0)}\Vert^2_{H^3(\mathbb T)}+\Vert\varphi^{(1)}\Vert^2_{H^2(\mathbb T)}\right\}\,,\quad\forall\,t\in[0,T_0)\,.
\end{equation}
\item[(2)] If the initial data satisfy $\varphi^{(0)}\in H^{s}(\mathbb T)$, $\varphi^{(1)}\in H^{s-1}(\mathbb T)$, with an arbitrary real $s\ge 3$, and \eqref{ip_dati_iniziali} (with the lower norm $H^2$) then the corresponding solution $\varphi$ belongs to $C([0, T_0); H^{s}(\mathbb T))\cap C^1([0, T_0); H^{s-1}(\mathbb T))$ and satisfies
\begin{equation}\label{stima_energia_s}
\Vert\varphi(t)\Vert_{H^{s}(\mathbb T)}^2+\Vert\varphi_t(t)\Vert_{H^{s-1}(\mathbb T)}^2\le C_s\left\{\Vert\varphi^{(0)}\Vert^2_{H^{s}(\mathbb T)}+\Vert\varphi^{(1)}\Vert^2_{H^{s-1}(\mathbb T)}\right\}\,,\quad\forall\,t\in[0,T_0)\,,
\end{equation}
with a suitable numerical constant $C_s>0$ depending only on $\delta,\mu,s$.
\end{itemize}
\end{theorem}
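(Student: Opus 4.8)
The plan is to work with the quasilinear spatial form \eqref{equ1terter}, viewing $a:=\mu-2\phi_x=\mu-2\mathbb{H}[\varphi]_x$ as a variable coefficient --- strictly positive under the stability condition --- and $\mathcal{Q}[\varphi]$ as a source which, thanks to the commutator structure \eqref{termine_nl0}, is of order zero in $\varphi$. The equation then reads as a second-order strictly hyperbolic wave equation $\varphi_{tt}-a\,\varphi_{xx}=-\mathcal{Q}[\varphi]$ with a lower-order, quadratic right-hand side, and the natural strategy is the standard one: construct the solution by an iteration of linear wave problems, prove energy estimates that lose no derivatives, and pass to the limit. The precise form $T_0\sim\{\cdots\}^{-1/2}$ of the existence time in \eqref{T0_nuova} already signals the mechanism: the quadratic nonlinearity will produce a Riccati inequality $\frac{d}{dt}\mathcal{N}\le C\,\mathcal{N}^{3/2}$ for the energy $\mathcal{N}(t):=\|\varphi(t)\|_{H^s}^2+\|\varphi_t(t)\|_{H^{s-1}}^2$, whose life-span is exactly of order $\mathcal{N}(0)^{-1/2}$.

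The heart of the proof is the no-loss energy estimate. For $s=3$ I would introduce the hyperbolic energy $E(t)=\tfrac12\sum_{k\le s-1}\int_{\mathbb T}\big((\partial_x^k\varphi_t)^2+a\,(\partial_x^k\varphi_x)^2\big)\,dx$, which by the maintained lower bound $a\ge\delta/2$ and Poincar\'e's inequality (recall $\varphi$ has zero spatial mean) is equivalent to $\mathcal{N}(t)$. Differentiating \eqref{equ1terter} up to order $s-1$ in $x$, multiplying by $\partial_x^{s-1}\varphi_t$ and integrating by parts to cancel the top-order contribution of $a\,\varphi_{xx}$, one is left with the coefficient factors $\|a_t\|_{L^\infty},\|a_x\|_{L^\infty}$, the commutator $[\partial_x^{s-1},a]\varphi_{xx}$, and the source $\partial_x^{s-1}\mathcal{Q}[\varphi]$. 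The crucial point --- and the main obstacle, which is exactly what allows us to dispense with the Nash--Moser scheme of \cite{M-S-T:ONDE1} --- is to realize the order-zero character of $\mathcal{Q}$ directly at the level of $H^{s-1}$: the apparent third-order term $[\mathbb{H};\phi]\phi_{xxx}$ is tamed because the Hilbert commutator gains one derivative, so that, by the sharp commutator and Moser estimates collected in Sections \ref{molt_fourier}, \ref{trasf_hilbert}, one gets $\|\mathcal{Q}[\varphi]\|_{H^{s-1}}\le C\,\|\varphi\|_{H^s}^2$ with no loss. Together with $\|a_t\|_{L^\infty}=2\|\mathbb{H}[\varphi_t]_x\|_{L^\infty}\lesssim\|\varphi_t\|_{H^2}$ (one-dimensional Sobolev embedding) and the analogous bound for $\|a_x\|_{L^\infty}$, this yields $\frac{d}{dt}E\le C\,E^{3/2}$, hence \eqref{stima_energia_2} on the interval \eqref{T0_nuova}. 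The strict positivity of $a$ is preserved on $[0,T_0)$ by a continuation argument: from $\partial_t\phi_x=\mathbb{H}[\varphi_t]_x$ one controls $\frac{d}{dt}\|\phi_x\|_{L^\infty}\lesssim\|\varphi_t\|_{H^2}\le\mathcal{N}^{1/2}$, so that choosing $R_0$ small (using Lemma \ref{lemma_R0} to pass from \eqref{ip_dati_iniziali} to $\mu-2\phi^{(0)}_x\ge\delta$) keeps $\mu-2\phi_x\ge\delta$ throughout.

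These a priori estimates drive the construction. I would set up the iteration $\varphi^{(n+1)}_{tt}-a^{(n)}\varphi^{(n+1)}_{xx}=-\mathcal{Q}[\varphi^{(n)}]$, with $a^{(n)}=\mu-2\mathbb{H}[\varphi^{(n)}]_x$: the estimate above gives uniform bounds for $\varphi^{(n)}$ in $C([0,T_0);H^3)\cap C^1([0,T_0);H^2)$, while the equation for the differences $\varphi^{(n+1)}-\varphi^{(n)}$, being linear with order-zero source, is estimated in the lower norm $H^2\times H^1$ and yields a contraction for $T_0$ small; the limit solves \eqref{onde_integro_diff}, and the same difference estimate gives uniqueness. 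The upgrade from weak-$*$ boundedness to genuine continuity in time with values in $H^3$, i.e.\ $\varphi\in C([0,T_0);H^3)$ rather than merely $L^\infty$, is obtained by the standard argument combining weak continuity with continuity of the energy norm (a Bona--Smith type regularization of the data). Finally, part (2) is a propagation-of-regularity statement: since $T_0$ and the lower bound on $a$ depend only on the $H^2$-type norm of the data, the order-$s$ energy inequality becomes \emph{linear}, $\frac{d}{dt}E_s\le C(\delta,\mu,s)\big(\|\varphi\|_{H^3}+\|\varphi_t\|_{H^2}\big)\,E_s$, because in every commutator and in $\mathcal{Q}$ the top $s$ derivatives can be made to fall on one factor while the other is controlled by the already-bounded lower norm; Gr\"onwall on $[0,T_0)$ then gives \eqref{stima_energia_s} with constant depending only on $\delta,\mu,s$.
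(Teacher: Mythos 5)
Your proposal is essentially correct, but it follows a genuinely different route from the paper. You construct the solution from scratch by a Picard iteration on linear wave problems $\varphi^{(n+1)}_{tt}-a^{(n)}\varphi^{(n+1)}_{xx}=-\mathcal Q[\varphi^{(n)}]$, with a contraction in a lower norm and a Bona--Smith regularization for strong time-continuity. The paper never sets up such an iteration: it mollifies the data to $H^\infty$, invokes the earlier Nash--Moser existence theorem (Theorem \ref{nonlin_th}) for each $m$, and then uses the a priori estimates of Theorem \ref{mainHs} --- not only the $H^2$-level Riccati bound giving $T\sim\rho^{-1}$, but crucially the higher-order estimate at the $H^{10}$ level with $m$-independent constant --- to re-apply the local theorem finitely many (but $m$-dependently many) times and extend every $\varphi_m$ to a common interval; strong continuity then comes from weak compactness plus a Majda-type argument (Ioffe lower semicontinuity of the weighted energy together with $\sqrt{\mathcal E(t)}\le\sqrt{\mathcal E(0)}+Ct$), rather than Bona--Smith. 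What each buys: your scheme is self-contained and avoids the continuation bookkeeping (and the need for $H^\infty$ approximants, forced in the paper because the number of continuation steps $k_m$ may diverge), at the price of establishing well-posedness of the variable-coefficient linear problem at the limited regularity $a\in C([0,T];H^2)\cap C^1([0,T];H^1)$ and carrying out the Bona--Smith step; the paper reuses the old theorem as a black box. Two small corrections: $\mathcal Q$ is of order one, not zero --- Lemma \ref{lemma_stima_quadr} gives $\Vert\mathcal Q[\varphi]\Vert_{H^r}\le C\Vert\varphi_x\Vert_{H^2}\Vert\varphi_x\Vert_{H^r}$, i.e.\ a map $H^{r+1}\to H^r$; what matters is that this matches the level of $\varphi_t$, and the low--high bilinear structure (not just $\Vert\varphi\Vert_{H^s}^2$) is what linearizes the order-$s$ estimate in part (2). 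Moreover, for the constant $C_s$ there to depend only on $\delta,\mu,s$ you must, as in \eqref{esponente}, use that $T_0\cdot\sup_t\{\Vert\varphi_x\Vert_{H^2}^2+\Vert\varphi_t\Vert_{H^2}^2\}^{1/2}$ is bounded by a universal constant --- your $T_0\sim\mathcal N(0)^{-1/2}$ scaling supplies exactly this, but it should be said explicitly.
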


\begin{remark}
{\rm The smallness assumption \eqref{ip_dati_iniziali} on the initial data $\varphi^{(0)}$, $\varphi^{(1)}$ relates to the stability condition \eqref{extstab_unif} as explained just above: for given $\mu$ and $\delta$, such that $0<\delta<\mu$, $R_0$ in \eqref{ip_dati_iniziali} will be determined in such a way that the latter inequality  implies \eqref{Linfty}, hence the stability condition \eqref{extstab_unif}, see the proof of the next Lemma \ref{lemma_R0}.}
\end{remark}

The proof of Theorem \ref{th_esistenza} relies on a density argument and the existence theorem proved in \cite{M-S-T:ONDE1} (see Theorem \ref{nonlin_th} in Section \ref{sec_ex_uniq}):
given initial data $\varphi^{(0)},\varphi^{(1)}$ we take a sequence of smooth approximating initial data $\varphi^{(0)}_m,\varphi^{(1)}_m$ satisfying the requirements of Theorem \ref{nonlin_th}.  Applying Theorem \ref{nonlin_th} for each $m$
gives the corresponding solutions $\varphi_m$, defined on time intervals $[0,T_m]$, a priori depending on $m$.
We prove a uniform a priori estimate in Sobolev norm for the approximating solutions $\varphi_m$, and show that they can be defined on a {\it common} time interval $[0,T]$.
Then we pass to the limit in $\varphi_m$ for $m\to+\infty$ and obtain the solution $\vphi$ on $[0,T]$.
Finally, we prove the strong continuity in time of the solution $\vphi$.

Of course, the crucial part of the whole proof is the uniform a priori estimate in Sobolev norm for the approximating solutions $\varphi_m$.
For, it is convenient to consider equation \eqref{onde_integro_diff} in the alternative version \eqref{equ1terter}, with the meaningful term $\mu-2\phi_x$ in evidence, in view of the stability condition \eqref{extstab}.
The a priori estimate follows from a careful analysis of the quadratic term $\mathcal Q\left[\varphi\right]$, defined in 	\eqref{termine_nl0}. Studying it in the frequency space, we show that the kernel associated to the quadratic form has symmetry and reality properties that considerably simplify the analysis and, most of all, the quadratic nonlinearity produces a \lq\lq cancelation effect\rq\rq which allow us to prove an estimate of $\mathcal Q\left[\varphi\right]$ by norms of $\vphi$ with the wished for order of regularity.

The paper is organized as follows. After the following Section \ref{prbt} about notations and basic tools,  Section \ref{sec_stimaH3} will be devoted to the derivation of suitable energy estimates for all sufficiently regular solutions, see Theorem \ref{mainHs} below, and Section \ref{sec_uniqueness} will concern the uniqueness of the solution to problem \eqref{onde_integro_diff}, \eqref{id}.
In Section \ref{sec_ex_uniq} we first recall the existence result of \cite{M-S-T:ONDE1}, see Theorem \ref{nonlin_th}; then we show the existence of the solution as in Statement (1) of
Theorem \ref{th_esistenza}, the additional regularity of the solution, notably the strong continuity in time, and at last Statement (2) of
Theorem \ref{th_esistenza}.

In Appendix \ref{stima_commutatore} we prove our commutator estimates involving the Hilbert transform and give other useful estimates.
Appendix \ref{sec_quadratic} is devoted to the analysis of the quadratic term $\mathcal Q[\varphi]$.
In Appendix \ref{Ioffe} we study the lower semi-continuity of the energy, and in Appendix \ref{Lions-Magenes_sect} we recall
some results from abstract functional analysis.


\section{Preliminary results and basic tools}\label{prbt}
\subsection{Notations}\label{not}
Throughout the whole paper, the partial derivative of a function $f(t,x)$ with respect to $t$ or $x$ will be denoted appending to the function the subscript $t$ or $x$ as
\begin{equation*}
f_t:=\frac{\partial f}{\partial t}\,,\qquad f_x:=\frac{\partial f}{\partial x}\,.
\end{equation*}
(The notations $\partial_t f$, $\partial_x f$ will be also used.) Higher order derivatives in $(t,x)$ will be denoted by the repeated indices; for instance $f_{tt}$ and $f_{tx}$ will stand respectively for second order derivatives of $f$ with respect to $t$ twice and $t$, $x$.

Let $\mathbb T$ denote the one-dimensional torus defined as
\begin{equation*}
\mathbb T:=\mathbb R/(2\pi\mathbb Z)\,,
\end{equation*}
that is the set of equivalence classes of real numbers with respect to the equivalence relation $\sim$ defined as
\begin{equation*}
x\sim y\qquad\mbox{if and only if}\qquad x-y\in 2\pi\mathbb Z\,.
\end{equation*}
It is customary to identify functions that are defined on $\mathbb T$ with $2\pi-$periodic functions on $\mathbb R$. According to this convention, it will be usual referring to $f:\mathbb T\rightarrow\mathbb C$ as a ``periodic function''.

All periodic functions $f:\mathbb T\rightarrow\mathbb C$ can be expanded in terms of Fourier series as
\begin{equation*}
f(x)=\sum\limits_{k\in\mathbb Z}\widehat{f}(k)e^{ikx}\,,
\end{equation*}
where $\left\{\widehat{f}(k)\right\}_{k\in\mathbb Z}$ are the Fourier coefficients defined by
\begin{equation}\label{coeff_fourier}
\widehat{f}(k):=\frac1{2\pi}\int_{\mathbb T}f(x)e^{-ikx}\,dx\,,\qquad k\in\mathbb Z\,.
\end{equation}
For $1\le p\le +\infty$, we denote by $L^p(\mathbb T)$ the usual Lebesgue space of exponent $p$ on $\mathbb T$, defined as the set of (equivalence classes of) measurable functions $f:\mathbb T\rightarrow\mathbb C$ such that the norm
\begin{equation*}
\Vert f\Vert_{L^p(\mathbb T)}:=
\begin{cases}\left(\int_{\mathbb T}\vert f(x)\vert^p\,dx\right)^{1/p}\,,&\quad\mbox{if}\,\,p<+\infty\,\\
\mbox{ess sup}_{x\in\mathbb T}\vert f(x)\vert\,,&\quad\mbox{if}\,\,p=+\infty
\end{cases}
\end{equation*}
is finite. We denote
\begin{equation*}
(f,g)_{L^2(\mathbb T)}:=\int_{\mathbb T}f(x)\overline{g(x)}\,dx
\end{equation*}
the inner product of two functions $f, g\in L^2(\mathbb T)$ ($\overline z$ denotes the conjugate of $z\in\mathbb C$).

Let us recall that Parseval's identity
\begin{equation}\label{parseval_1}
\Vert f\Vert_{L^2(\mathbb T)}^2=2\pi\sum\limits_{k\in\mathbb Z}\vert\widehat{f}(k)\vert^2
\end{equation}
holds true for every $f\in L^2(\mathbb T)$.

Let us also recall that, whenever $f$ and $g$ are sufficiently smooth periodic functions on $\mathbb T$, there holds
\begin{equation}\label{convoluzione}
\widehat{f\cdot g}=\widehat{f}\ast\widehat{g}\,,
\end{equation}
where $\widehat f\ast\widehat g$ is the {\it discrete convolution} of the sequences $\widehat f:=\left\{\widehat f(k)\right\}_{k\in\mathbb Z}$ and $\widehat g:=\left\{\widehat g(k)\right\}_{k\in\mathbb Z}$ defined by
\begin{equation}\label{conv}
\widehat f\ast\widehat g(k):=\sum\limits_{\ell}\widehat{f}(k-\ell)\widehat{g}(\ell)\,,\qquad\forall\,k\in\mathbb Z\,.
\end{equation}

For all $s\in\mathbb R$, $H^s(\mathbb T)$ will denote the Sobolev space of order $s$ on $\mathbb T$, defined to be the set of periodic functions\footnote{The word ``function'' is used here, and in the rest of the paper, in a wide sense. To be more precise, one should speak about ``periodic distributions'' on the torus, instead of ``periodic functions'', when dealing with real order Sobolev spaces. However, for the sake of simplicity, here we prefer to avoid the precise framework of distributions. We refer the reader to the monograph \cite{ruzhansky-turunen} for a thorough presentation of the periodic setting.} $f:\mathbb T\rightarrow\mathbb C$ such that
\begin{equation}\label{normaHs}
\Vert f\Vert_{H^s(\mathbb T)}^2:=2\pi\sum\limits_{k\in\mathbb Z}\langle k\rangle^{2s}\vert\widehat{f}(k)\vert^2<+\infty\,,
\end{equation}
where it is set
\begin{equation}\label{bracket}
\langle k\rangle:=(1+\vert k\vert^2)^{1/2}\,.
\end{equation}
The function $\Vert\cdot\Vert_{H^s(\mathbb T)}$ defines a norm on $H^s(\mathbb T)$, associated to the inner product
\begin{equation*}
(f,g)_{H^s(\mathbb T)}:=2\pi\sum\limits_{k\in\mathbb Z}\langle k\rangle^{2s}\widehat{f}(k)\overline{\widehat{g}(k)}\,,
\end{equation*}
which turns $H^s(\mathbb T)$ into a Hilbert space\footnote{Normalizing by $2\pi$ the $H^s-$norm in \eqref{normaHs} is in agreement with Parseval's identity \eqref{parseval_1}; indeed for $s=0$ the norm \eqref{normaHs} in $H^0(\mathbb T)\equiv L^2(\mathbb T)$ reduces exactly to the $L^2-$norm.}.

Because of the relation between differentiation and Fourier coefficients, it is obvious that when $s$ is a positive integer $H^s(\mathbb T)$ reduces to the space of periodic functions $f:\mathbb T\rightarrow\mathbb C$ such that
\begin{equation*}
\partial_x^k f\in L^2(\mathbb T)\,,\quad\mbox{for}\,\,0\le k\le s
\end{equation*}
and
\begin{equation*}
\sum\limits_{k=0}^s\left\Vert \partial_x^k f\right\Vert_{L^2(\mathbb T)}
\end{equation*}
defines a norm in $H^s(\mathbb T)$ equivalent to \eqref{normaHs}\footnote{Even though the functions $f$ involved here depend on $x\in \mathbb T$ alone, the partial derivative notation $\partial_x^k:=\partial_x\dots\partial_x$ ($k$ times) is used just in order to be consistent with the notations adopted in the subsequent sections, where functions will also depend on time.}.

We recall that for every $p\in[1,+\infty]$, $\ell^p$ denotes the space of all complex sequences $\{c(k)\}_{k\in\mathbb Z}$ such that
\begin{equation}\label{normalp}
\begin{split}
&\Vert\{c(k)\}\Vert_{\ell^p}^p:=\sum\limits_{k\in\mathbb Z}\vert c(k)\vert^p<+\infty\,,\,\,\mbox{if}\,\,p<+\infty\,,\\ &\Vert\{c(k)\}\Vert_{\ell^\infty}:=\sup\limits_{k\in\mathbb Z}\vert c(k)\vert<+\infty\,,\,\,\mbox{if}\,\,p=+\infty\,,
\end{split}
\end{equation}
provided with the norm $\Vert\cdot\Vert_{\ell^p}$ defined above.

It is useful to recall that for all functions $f\in H^\tau(\mathbb T)$ the following estimate
\begin{equation}\label{imm_sobolev}
\Vert\{\widehat f(k)\}\Vert_{\ell^1}\le c_\tau\Vert f\Vert_{H^\tau(\mathbb T)}
\end{equation}
holds true  as long as $\tau>1/2$, with some positive constant $c_\tau$ depending only on $\tau$.

In the following, we are mainly concerned with real-valued periodic functions $f:\mathbb T\rightarrow\mathbb R$ with zero spatial mean, that is such that
\begin{equation*}
\int_{\mathbb T}f(x)\,dx=0\,.
\end{equation*}
For such functions the Fourier coefficients \eqref{coeff_fourier} obey the additional constraints
\begin{equation}\label{coeff_fourier_condizioni}
\widehat{f}(0)=0\,,\qquad \overline{\widehat{f}(k)}=\widehat{f}(-k)\,,\,\,\forall\,k\in\mathbb Z\,.
\end{equation}

\vspace{.5cm}
Hereafter, we will deal with spaces of functions that depend even on time $t$. It will be convenient to regard real-valued functions $f=f(t,x)$, depending on time and space, as vector-valued functions of $t$ alone taking values in some Banach space $\mathcal X$ of functions depending on $x\in\mathbb T$. 

For $T>0$ and $j\in\mathbb N$, we denote by $C^j([0,T]; \mathcal X)$ the space of $j$ times continuously differentiable functions $f:[0,T]\rightarrow \mathcal X$.
\subsection{Some reminds on periodic Fourier multipliers}\label{molt_fourier}
For a given sequence of real (or complex) numbers $\{A(k)\}_{k\in\mathbb Z}$, we denote by $A$ the linear operator defined on periodic functions $f:\mathbb T\rightarrow\mathbb C$ by setting
\begin{equation}\label{operatore1}
Af(x):=\sum\limits_{k\in\mathbb Z}A(k)\widehat{f}(k)e^{ikx}\,,\qquad x\in\mathbb T\,,
\end{equation}
or equivalently, on the Fourier side, by its Fourier coefficients
\begin{equation}\label{operatore2}
\widehat{Af}(k)=A(k)\widehat{f}(k)\,,\qquad\forall\,k\in\mathbb Z\,.
\end{equation}
We refer to the sequence $\{A(k)\}_{k\in\mathbb Z}$ as the {\it symbol} of the operator $A$.

The following continuity result (cf. \cite{M-S-T:ONDE1}) will be useful in the sequel.
\begin{proposition}\label{prop_molt_sobolev}
Let the sequence  $\{A(k)\}_{k\in\mathbb Z}$ satisfy the following assumption
\begin{equation}\label{stima_simbolo}
\langle k\rangle^{-m}\vert A(k)\vert\le C\,,\qquad\forall\,k\in\mathbb Z\,,
\end{equation}
with suitable constants $m\in\mathbb R$, $C>0$; then the operator $A$ with symbol $\{A(k)\}_{k\in\mathbb Z}$, defined by \eqref{operatore1}, extends as a linear bounded operator
\begin{equation*}
A:H^{s}(\mathbb T)\rightarrow H^{s-m}(\mathbb T)\,,
\end{equation*}
for all $s\in\mathbb R$; more precisely
\begin{equation*}
\Vert Af\Vert_{H^{s-m}(\mathbb T)}\le C\Vert f\Vert_{H^{s}(\mathbb T)}\,,\qquad\forall\,f\in H^{s}(\mathbb T)\,,
\end{equation*}
where $C$ is the same constant involved in \eqref{stima_simbolo}.
\end{proposition}
We will refer to an operator $A$, under the assumptions of Proposition \ref{prop_molt_sobolev}, as a {\it Fourier multiplier} of order $m$. Such an operator transforms periodic functions with mean zero into functions of the same type, as it is easily seen by observing that
$\widehat{Af}(0)=A(0)\widehat{f}(0)=0,
$ as long as $\widehat f(0)=0$.
\begin{remark}\label{remark_prodotto}
As a straightforward consequence of formulas \eqref{operatore1}, \eqref{operatore2}, it even follows that the composition $AB$ of two Fourier multipliers $A$ and $B$, whose symbols are respectively  $\{A(k)\}_{k\in\mathbb Z}$ and  $\{B(k)\}_{k\in\mathbb Z}$, is again a Fourier multiplier whose symbol is given by  $\{A(k)B(k)\}_{k\in\mathbb Z}$ (the order of $AB$ being the sum of the orders of $A$ and $B$ separately, because of \eqref{stima_simbolo}). We have in particular that $AB=BA$.
\end{remark}

An example of a Fourier multiplier of order one is provided by the $x-$derivative, i.e. $Af=f_x$, since indeed
\begin{equation*}
\widehat{Af}(k)=\widehat{f_x}(k)=ik\widehat{f}(k)\,,\qquad\forall\,k\in\mathbb Z\,.
\end{equation*}
Another relevant example of a Fourier multiplier is considered in the next section.
\subsection{Discrete Hilbert transform}\label{trasf_hilbert}
The discrete Hilbert transform of a periodic function $f:\mathbb T\rightarrow\mathbb C$, denoted by $\mathbb H[f]$, is defined on the Fourier side by setting
\begin{equation}\label{hilbert1}
\widehat{\mathbb H[f]}(k)=-i\,{\rm sgn}\,k\widehat{f}(k)\,,\qquad\forall\,k\in\mathbb Z\,,
\end{equation}
where
\begin{equation}\label{segno}
{\rm sgn}\,k:=
\begin{cases}
1\,,\quad\mbox{if}\,\,k>0\,,\\ 0\,,\quad\mbox{if}\,\,k=0\,,\\ -1\,,\quad\mbox{if}\,\,k<0\,.
\end{cases}
\end{equation}
It is clear that, in view of Proposition \ref{prop_molt_sobolev}, the Hilbert transform provides a Fourier multiplier of order zero, the condition \eqref{stima_simbolo} being satisfied by $A(k)=-i\,{\rm sgn}\,k$ with $m=0$ and $C=1$; then after Proposition \ref{prop_molt_sobolev} we conclude that
\begin{equation*}
\mathbb H:H^s(\mathbb T)\rightarrow H^s(\mathbb T)
\end{equation*}
is a linear bounded operator and
\begin{equation}\label{stima_hilbert}
\Vert\mathbb H[f]\Vert_{H^s(\mathbb T)}\le\Vert f\Vert_{H^s(\mathbb T)}\,,\qquad\forall\,f\in H^s(\mathbb T)
\end{equation}
for all $s\in\mathbb R$.

Here below we collect a few elementary properties of the Hilbert transform that will be useful in the sequel.
\begin{itemize}
\item[1.] The Hilbert transform commutes with the $x-$derivative. It is a particular case of the property recalled in Remark \ref{remark_prodotto};
\item[2.] For all periodic functions $f,g:\mathbb T\rightarrow\mathbb C$ there holds
\begin{equation}\label{prodotto_hilbert}
\mathbb H\left[fg-\mathbb H[f]\mathbb H[g]\right]=f\mathbb H[g]+\mathbb H[f]g\,.
\end{equation}
\item[3.] For every periodic function $f:\mathbb T\rightarrow\mathbb C$, with zero mean, and $k\in\mathbb Z$, the following formulas of calculus hold true \footnote{Notice that, according to the convention ${\rm sgn}\,0=0$ (see \eqref{segno}), the Hilbert transform $\mathbb H[f]$ of any periodic function $f$ on $\mathbb T$ has zero mean. Hence, the first formula in \eqref{calcolo} is not true when the mean of  $f$ is different from zero. In the latter case, that formula should be replaced by $\mathbb H^2[f]=-f +\widehat{f}(0)$.}
\begin{equation}\label{calcolo}
\mathbb H^2[f]=-f\,,\qquad \mathbb H\left[e^{ik\cdot}\right](x)=-i\,{\rm sgn}\,k\,e^{ikx}\,.
\end{equation}
\item[4.] For all periodic functions $f,g\in L^2(\mathbb T)$ there holds
\begin{equation}\label{integraleH}
\left(\mathbb H[f], g\right)_{L^2(\mathbb T)}=\left(f, -\mathbb H[g]\right)_{L^2(\mathbb T)}.
\end{equation}
\item[5.] For all periodic functions $f,g\in L^2(\mathbb T)$ and $h\in L^\infty(\mathbb T)$ there holds
\begin{equation}\label{aggiunto}
\left(\left[h;\mathbb H\right]f , g\right)_{L^2(\mathbb T)}=\left(f ,\left[h;\mathbb H\right]g\right)_{L^2(\mathbb T)},
\end{equation}
where $\left[h;\mathbb H\right]$ denotes the commutator between the multiplication by the function $h$ and the Hilbert transform $\mathbb H$.
\end{itemize}

\section{A priori estimates}\label{sec_stimaH3}

The first step needed to prove Theorem \ref{th_esistenza} from the existence result proved in \cite{M-S-T:ONDE1} (see Theorem \ref{nonlin_th} in Section \ref{sec_ex_uniq}) is the derivation of suitable energy estimates for all sufficiently regular solutions.

\begin{theorem}\label{mainHs}
Let $0<R\le 1$ and $0<\delta<\mu$ be arbitrarily given and $\rho^2$ be defined as
\begin{equation}\label{rho0}
\rho^2:=(1+\mu+C)R^2\,,
\end{equation}
being $C$ a numerical positive constant. Let $T>0$ be chosen such that
\begin{equation}\label{ip:1}
1-C_\delta T\rho\ge\frac12\,,
\end{equation}
where $C_\delta$ is the positive constant depending on $\delta$ defined in \eqref{Cdelta}.
\begin{itemize}
\item[(1)] If $\varphi=\varphi(t,x)$ is a solution on $[0,T]\times\mathbb T$ of equation \eqref{onde_integro_diff}, with zero spatial mean, satisfying
    \begin{equation}\label{regH2}
    \varphi\in C([0,T]; H^{4}(\mathbb T))\cap C^1([0,T]; H^{3}(\mathbb T))\,,
    \end{equation}
    \begin{equation}\label{sign-cond-sol}
    \mu-2\mathbb H[\varphi]_x\ge\frac{\delta}{2}\,,\quad\mbox{on}\,\,[0,T]\times\mathbb T
    \end{equation}
    and
    \begin{equation}\label{ip:3}
    \Vert\varphi_x(0)\Vert_{H^2}^2+\Vert\varphi_t(0)\Vert^2_{H^2}\le R^2\,,
    \end{equation}
    then
    \begin{equation}\label{stima_H2}
    \Vert\varphi_x(t)\Vert^2_{H^2}+\Vert\varphi_t(t)\Vert^2_{H^2}\le\frac{4\rho^2}{\min\left\{1,\frac{\delta}{2}\right\}}\,,\quad\forall\,t\in[0,T]\,.
    \end{equation}
\item[(2)] If $\varphi=\varphi(t,x)$ is a solution on $[0,T]\times\mathbb T$ of equation \eqref{onde_integro_diff}, with zero spatial mean, satisfying
    \begin{equation}\label{regHs}
    \varphi\in C([0,T]; H^{r+2}(\mathbb T))\cap C^1([0,T]; H^{r+1}(\mathbb T))\,,
    \end{equation}
    for an arbitrary $r\ge 2$, and the assumptions \eqref{sign-cond-sol}, \eqref{ip:3} (with the lower norm $H^2$), then for all $t\in[0,T]$
    \begin{equation}\label{stima_Hs}
    \Vert\varphi_x(t)\Vert^2_{H^r}+\Vert\varphi_t(t)\Vert^2_{H^r}\le\frac{1+\mu+C}{\min\left\{1,\frac{\delta}{2}\right\}}e^{\widetilde C_r}\left\{\Vert\varphi_x(0)\Vert^2_{H^r}+\Vert\varphi_t(0)\Vert^2_{H^r}\right\}\,,
    \end{equation}
    where $C, \widetilde{C}_r$ are suitable positive numerical constants.
\end{itemize}
\end{theorem}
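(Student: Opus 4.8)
The plan is to run a weighted energy estimate directly on the second--order form \eqref{equ1terter}, \eqref{termine_nl0}, exploiting that under the sign condition \eqref{sign-cond-sol} the coefficient $a(t,x):=\mu-2\phi_x$ is bounded below by $\delta/2$, so that it is a legitimate time--dependent coefficient for a hyperbolic energy. For each integer level $j$ I would differentiate \eqref{equ1terter} $j$ times in $x$, set $v:=\partial_x^j\varphi$ (so $v_t=\partial_x^j\varphi_t$), and write
\[
v_{tt}-a\,v_{xx}=[\partial_x^j,a]\,\varphi_{xx}-\partial_x^j\mathcal Q[\varphi]\,,
\]
then pair with $v_t$ in $L^2(\mathbb T)$. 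Integrating by parts in $x$ gives the energy identity
\[
\frac{d}{dt}\,\tfrac12\!\int_{\mathbb T}\!\big(v_t^2+a\,v_x^2\big)\,dx=\tfrac12\!\int_{\mathbb T}\!a_t\,v_x^2\,dx-\!\int_{\mathbb T}\!a_x\,v_x v_t\,dx+\!\int_{\mathbb T}\!\big([\partial_x^j,a]\varphi_{xx}\big)v_t\,dx-\!\int_{\mathbb T}\!\big(\partial_x^j\mathcal Q[\varphi]\big)v_t\,dx\,.
\]
Summing over $j=0,\dots,r$ defines the total energy $\mathcal E_r(t)$. Since $\phi=\mathbb H[\varphi]$ and, by \eqref{ip:3} with $R\le1$, $\|\phi_x\|_{L^\infty}$ is small, one has $\tfrac\delta2\le a\le\mu+2\|\phi_x\|_{L^\infty}\le\mu+C$; hence $\mathcal E_r$ is equivalent to $\|\varphi_t\|_{H^r}^2+\|\varphi_x\|_{H^r}^2$ with coercivity constant $\min\{1,\delta/2\}$ from below and $1+\mu+C$ from above --- exactly the ratio appearing in \eqref{rho0}, \eqref{stima_H2}, \eqref{stima_Hs}, and the reason the estimate is phrased in terms of $\|\varphi_x\|_{H^r}$ (equivalent, for zero--mean $\varphi$, to $\|\varphi\|_{H^{r+1}}$).

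Next I would bound the four terms, the guiding principle being that each must carry a low--order ($H^2$) norm as coefficient times $\mathcal E_r$. Here $a_t=-2\mathbb H[\varphi_t]_x$ and $a_x=-2\mathbb H[\varphi]_{xx}$, so by the Sobolev embedding $H^1(\mathbb T)\hookrightarrow L^\infty(\mathbb T)$ and the $H^s$--boundedness \eqref{stima_hilbert} of $\mathbb H$, $\|a_t\|_{L^\infty}\le C\|\varphi_t\|_{H^2}$ and $\|a_x\|_{L^\infty}\le C\|\varphi_x\|_{H^2}$; thus the first two terms are $\le C(\|\varphi_t\|_{H^2}+\|\varphi_x\|_{H^2})\,\mathcal E_r$. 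For the commutator term I would use the Moser--type estimate of Appendix \ref{stima_commutatore}, $\|[\partial_x^j,a]\varphi_{xx}\|_{L^2}\le C(\|a_x\|_{L^\infty}\|\partial_x^{j+1}\varphi\|_{L^2}+\|\partial_x^j a\|_{L^2}\|\varphi_{xx}\|_{L^\infty})$; since $\|\partial_x^j a\|_{L^2}\le C\|\varphi_x\|_{H^r}$ and $\|\varphi_{xx}\|_{L^\infty}\le C\|\varphi_x\|_{H^2}$, this contribution is again $\le C\|\varphi_x\|_{H^2}\,\mathcal E_r$.

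The delicate term, and the main obstacle, is the quadratic one $-\int_{\mathbb T}(\partial_x^j\mathcal Q[\varphi])\,v_t\,dx$: a naive count on $\mathcal Q[\varphi]=-3[\mathbb H;\phi_x]\phi_{xx}-[\mathbb H;\phi]\phi_{xxx}$ would cost derivatives above the level $H^r$ and prevent the estimate from closing. Here I would invoke the frequency--space analysis of Appendix \ref{sec_quadratic}: writing $\widehat{\mathcal Q[\varphi]}(k)=\sum_\ell K(k,\ell)\,\widehat\varphi(k-\ell)\widehat\varphi(\ell)$ and exploiting the symmetry and reality properties of the kernel $K$, the top--order contributions cancel (or collapse to commutators of genuinely lower order, thanks to the smoothing of $[\mathbb H;\cdot]$), so that this term too is bounded by $C\|\varphi_x\|_{H^2}\,\mathcal E_r$ without loss of regularity. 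I expect this \emph{cancelation effect}, announced in the Introduction, to be the heart of the whole estimate.

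Finally I would close the argument. Collecting the four bounds gives a differential inequality $\frac{d}{dt}\mathcal E_r\le C_\delta\big(\|\varphi_t\|_{H^2}+\|\varphi_x\|_{H^2}\big)\,\mathcal E_r$. For part (1), $r=2$ and the coefficient coincides with the top--order energy, so $\|\varphi_t\|_{H^2}+\|\varphi_x\|_{H^2}\le C\sqrt{\mathcal E_2}$ and $\frac{d}{dt}\mathcal E_2\le C_\delta\,\mathcal E_2^{3/2}$; integrating this Riccati--type inequality for $\mathcal E_2^{-1/2}$ and using \eqref{ip:1} (which forces $\mathcal E_2^{-1/2}(t)\ge\tfrac12\mathcal E_2^{-1/2}(0)$) yields $\mathcal E_2(t)\le4\,\mathcal E_2(0)\le2\rho^2$, whence \eqref{stima_H2} via the lower coercivity bound $\min\{1,\delta/2\}$. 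For part (2) the low--order norms $\|\varphi_t\|_{H^2}+\|\varphi_x\|_{H^2}$ are already controlled, uniformly by a multiple of $\rho$, through part (1); the inequality then becomes linear, $\frac{d}{dt}\mathcal E_r\le C_\delta\rho\,\mathcal E_r$, and Gronwall together with \eqref{ip:1} gives $\mathcal E_r(t)\le e^{\widetilde C_r}\mathcal E_r(0)$, which is \eqref{stima_Hs} once the norm--equivalence constants are restored.
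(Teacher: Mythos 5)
Your proposal is correct and follows essentially the same route as the paper's proof: the same weighted energy $\|v_t\|_{L^2}^2+\int_{\mathbb T}(\mu-2\phi_x)\,v_x^2\,dx$, the same four error terms after pairing with $v_t$ and integrating by parts, the same reliance on the kernel cancelation of Appendix \ref{sec_quadratic} (i.e.\ Lemma \ref{lemma_stima_quadr}, $\Vert\mathcal Q[\varphi]\Vert_{H^r}\le C\Vert\varphi_x\Vert_{H^2}\Vert\varphi_x\Vert_{H^r}$) for the quadratic term, and the same closure --- a Riccati inequality at the level $r=2$ and a linear Gronwall estimate at the general level $r$. Two small remarks: the statement is for arbitrary \emph{real} $r\ge 2$ (and is in fact applied in the proof of Theorem \ref{th_esistenza}, Statement (2), with $r=s-1$ for real $s\ge 3$), so your family of integer derivatives $\partial_x^j$, $j=0,\dots,r$, should be replaced by the single Fourier multiplier $\langle\partial_x\rangle^r$ as in the paper, the needed commutator bound for $\left[\langle\partial_x\rangle^r\,;\,\phi_x\right]\varphi_{xx}$ being exactly Lemma \ref{commutatore_ds}; and integrating the Riccati inequality under \eqref{ip:1} gives $\mathcal E_2(t)\le 4\,\mathcal E_2(0)\le 4\rho^2$ (not $2\rho^2$), which is precisely what \eqref{stima_H2} requires after dividing by the coercivity constant $\min\left\{1,\frac{\delta}{2}\right\}$.
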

\begin{remark}\label{rmk:Hs1}
Let us point out that in fact for every solution to the problem \eqref{onde_integro_diff}, \eqref{id} satisfying the regularity condition in \eqref{regHs}, for an arbitrary $r\ge 2$,
the additional regularity $\varphi\in C^2([0,T]; H^r(\mathbb T))$ comes for free from the preceding by using the equation and the commutator estimates in Section \ref{stima_commutatore}, see Corollary \ref{add_regularity}.
\end{remark}


\begin{proof}
Let $\varphi=\varphi(t,x)$ be a solution of \eqref{onde_integro_diff}, defined on $[0,T]\times\mathbb T$ with $T$ given in \eqref{ip:1}, fulfilling assumptions \eqref{regHs} and \eqref{sign-cond-sol} on $[0,T]$.
In order to exploit the sign condition \eqref{sign-cond-sol}, it is first convenient to rewrite the equation \eqref{onde_integro_diff} (in the equivalent form \eqref{equ1bisbis}) as follows. Starting from \eqref{equ1bisbis}, differentiating in $x$ and using the first identity in \eqref{calcolo} (recall that $\varphi$ has zero spatial mean), we get
\begin{equation*}
\begin{split}
\varphi_{tt}&-\mu\varphi_{xx}-\left(\left[\mathbb H\,;\,\phi\right]\phi_{xx}+\mathbb H\left[\phi^2_x\right]\right)_x\\
&=\varphi_{tt}-\mu\varphi_{xx}-\left[\mathbb H\,;\,\phi_x\right]\phi_{xx}-\left[\mathbb H\,;\,\phi\right]\phi_{xxx}-\mathbb H\left[2\phi_x\phi_{xx}\right]\\
&=\varphi_{tt}-\mu\varphi_{xx}-\left[\mathbb H\,;\,\phi_x\right]\phi_{xx}-\left[\mathbb H\,;\,\phi\right]\phi_{xxx}-\left(2\phi_x\mathbb H\left[\phi_{xx}\right]+2\left[\mathbb H\,;\,\phi_x\right]\phi_{xx}\right)\\
&=\varphi_{tt}-\mu\varphi_{xx}-\left[\mathbb H\,;\,\phi_x\right]\phi_{xx}-\left[\mathbb H\,;\,\phi\right]\phi_{xxx}+2\phi_x\varphi_{xx}-2\left[\mathbb H\,;\,\phi_x\right]\phi_{xx}\\
&=\varphi_{tt}-\left(\mu-2\phi_x\right)\varphi_{xx}-3\left[\mathbb H\,;\,\phi_x\right]\phi_{xx}-\left[\mathbb H\,;\,\phi\right]\phi_{xxx}\\
&=\varphi_{tt}-\left(\mu-2\phi_x\right)\varphi_{xx}+\mathcal Q\left[\varphi\right]\,,
\end{split}
\end{equation*}
where it is set
\begin{equation}\label{termine_nl}
\mathcal Q\left[\varphi\right]:=-3\left[\mathbb H\,;\,\phi_x\right]\phi_{xx}-\left[\mathbb H\,;\,\phi\right]\phi_{xxx}\,,
\end{equation}
and we recall that $\phi=\mathbb H[\varphi]$.
Hence the equation \eqref{onde_integro_diff} takes the form
\begin{equation}\label{equ1ter}
\begin{split}
\varphi_{tt}-\left(\mu-2\phi_x\right)\varphi_{xx}+\mathcal Q\left[\varphi\right]=0\,.
\end{split}
\end{equation}
Finding an estimate for the $H^r-$norm of $\varphi_x$, $\varphi_t$ amounts to provide an estimate of the $L^2-$norm of $\langle\partial_x\rangle^r\varphi_x$, $\langle\partial_x\rangle^r\varphi_t$, where hereafter $\langle\partial_x\rangle^r$ denotes the Fourier multiplier of symbol $\langle k\rangle^r$ that is
\begin{equation}\label{ds}
\widehat{\langle\partial_x\rangle^r u}(k)=\langle k\rangle^r \widehat{u}(k)\,,\quad\forall\,k\in\mathbb Z\,.
\end{equation}
Indeed from \eqref{parseval_1} and \eqref{ds} the identity
\begin{equation}\label{norme_uguali}
\Vert u\Vert_{H^r(\mathbb T)}=\Vert\langle\partial_x\rangle^r u\Vert_{L^2(\mathbb T)}
\end{equation}
follows at once.

In order to get the desired estimate, let us apply the operator $\langle\partial_x\rangle^r$ to the equation \eqref{equ1ter}; then we find
\begin{equation*}
\left(\langle\partial_x\rangle^r\varphi\right)_{tt}-\left(\mu-2\phi_x\right)\left(\langle\partial_x\rangle^r\varphi\right)_{xx}+2\left[\langle\partial_x\rangle^r\,;\,\phi_x\right]\varphi_{xx}
+\langle\partial_x\rangle^r\mathcal Q\left[\varphi\right]=0\,,
\end{equation*}
where $\left[A\,;\,B\right]:=AB-BA$ is the commutator between the operators $A$ and $B$.
In order to shorten the involved expressions, later on we set
\begin{equation}\label{derivatas}
\psi:=\langle\partial_x\rangle^r\varphi\,.
\end{equation}
Then the last equation becomes
\begin{equation}\label{equpsi}
\psi_{tt}-\left(\mu-2\phi_x\right)\psi_{xx}+2\left[\langle\partial_x\rangle^r\,;\,\phi_x\right]\varphi_{xx}
+\langle\partial_x\rangle^r\mathcal Q\left[\varphi\right]=0\,.
\end{equation}
Let us multiply \eqref{equpsi} by $\psi_t$ and integrate in space over $\mathbb T$ to get
\begin{equation}\label{equpsi_int}
\int_{\mathbb T}\psi_{tt}\psi_t\,dx-\int_{\mathbb T}\left(\mu-2\phi_x\right)\psi_{xx}\psi_t\,dx+2\int_{\mathbb T} \left[\langle\partial_x\rangle^r\,;\,\phi_x\right]\varphi_{xx} \psi_t\,dx+\int_{\mathbb T}\langle\partial_x\rangle^r\mathcal Q\left[\varphi\right]\psi_t\,dx=0\,.
\end{equation}
Differentiation in $t$ under the integral sign, Leibniz's formula, integration by parts in $x$ give for the second integral in the left-hand side above
\begin{equation}\label{int1e2}
\begin{split}
&-\int_{\mathbb T}\left(\mu-2\phi_x\right)\psi_{xx}\psi_t\,dx=\int_{\mathbb T}\partial_{x}\left(\left(\mu-2\phi_x\right)\psi_{t}\right)\psi_x\,dx\\
&\quad=-2\int_{\mathbb T}\phi_{xx}\psi_{t}\psi_x\,dx+\int_{\mathbb T}\left(\mu-2\phi_x\right)\psi_{xt}\psi_x\,dx\\
&\quad=-2\int_{\mathbb T}\mathbb H\left[\varphi_{xx}\right]\psi_{t}\psi_x\,dx+\frac12\frac{d}{dt}\int_{\mathbb T}\left(\mu-2\phi_x\right)\vert\psi_{x}\vert^2\,dx+\int_{\mathbb T}\phi_{xt}\vert\psi_{x}\vert^2\,dx\,.
\end{split}
\end{equation}
Substituting \eqref{int1e2} into \eqref{equpsi_int} we get
\begin{equation}\label{equpsi_int_1}
\begin{split}
\frac12\frac{d}{dt}&\left(\Vert\psi_t(t)\Vert^2_{L^2(\mathbb T)}+\int_{\mathbb T}\left(\mu-2\phi_x\right)\vert\psi_{x}\vert^2\,dx\right)\\
&=2\int_{\mathbb T}\mathbb H\left[\varphi_{xx}\right]\psi_{t}\psi_x\,dx-\int_{\mathbb T}\phi_{xt}\vert\psi_{x}\vert^2\,dx-2\int_{\mathbb T} \left[\langle\partial_x\rangle^r\,;\,\phi_x\right]\varphi_{xx} \psi_t\,dx-\int_{\mathbb T}\langle\partial_x\rangle^r\mathcal Q\left[\varphi\right]\psi_t\,dx\\
&=\sum\limits_{k=1}^4 J_k\,.
\end{split}
\end{equation}
Now we estimate separately each of the integrals $J_1$,..., $J_4$ involved in the right-hand side above. In the following, we  denote by the letter $C$ a positive numerical constant, independent of $r$, that may be different from line to line. With $C_r$, we denote a positive numerical constant depending on $r$.

{\it Estimate of $J_1$:} H\"older  inequality (recall that $\psi$ has zero spatial mean), Sobolev's imbedding and the Sobolev continuity of $\mathbb H$ yield
\begin{equation}\label{stima_J1}
\begin{split}
\vert J_1\vert &=\left\vert 2\int_{\mathbb T}\mathbb H\left[\varphi_{xx}\right]\psi_{t}\psi_x\,dx\right\vert\le 2\Vert\mathbb H\left[\varphi_{xx}\right]\Vert_{L^\infty(\mathbb T)}\Vert\psi_x\Vert_{L^2(\mathbb T)}\Vert\psi_t\Vert_{L^2(\mathbb T)}\\
&\le C\Vert\varphi_{xx}\Vert_{H^1(\mathbb T)}\Vert\psi_x\Vert_{L^2(\mathbb T)}\Vert\psi_t\Vert_{L^2(\mathbb T)}\le C\Vert\varphi_{x}\Vert_{H^2(\mathbb T)}\Vert\psi_x\Vert_{L^2(\mathbb T)}\Vert\psi_t\Vert_{L^2(\mathbb T)}\,.
\end{split}
\end{equation}

{\it Estimate of $J_2$:} arguing as for $J_1$ and using Poincar\'e inequality we get
\begin{equation}\label{stima_J2}
\begin{split}
\vert J_2\vert &=\left\vert -\int_{\mathbb T}\phi_{xt}\vert\psi_{x}\vert^2\,dx\right\vert\le \Vert\phi_{xt}\Vert_{L^\infty(\mathbb T)}\Vert\psi_x\Vert^2_{L^2(\mathbb T)}\\
&\le C\Vert\phi_{xt}\Vert_{H^1(\mathbb T)}\Vert\psi_x\Vert^2_{L^2(\mathbb T)}\le C\Vert\phi_{xxt}\Vert_{L^2(\mathbb T)}\Vert\psi_x\Vert^2_{L^2(\mathbb T)}= C\Vert\mathbb H\left[\varphi_{xxt}\right]\Vert_{L^2(\mathbb T)}\Vert\psi_x\Vert^2_{L^2(\mathbb T)}\\
&\le C\Vert\varphi_{xxt}\Vert_{L^2(\mathbb T)}\Vert\psi_x\Vert^2_{L^2(\mathbb T)}\le C\Vert\varphi_{t}\Vert_{H^2(\mathbb T)}\Vert\psi_x\Vert^2_{L^2(\mathbb T)}\,.
\end{split}
\end{equation}

{\it Estimate of $J_3$:} first from Cauchy-Schwarz's inequality we compute
\begin{equation}\label{stima_J3_1}
\vert J_3\vert\le 2\Vert\left[\langle\partial_x\rangle^r\,;\,\phi_x\right]\varphi_{xx}\Vert_{L^2(\mathbb T)} \Vert\psi_t\Vert_{L^2(\mathbb T)}\,.
\end{equation}
In order to estimate the $L^2-$norm of $\left[\langle\partial_x\rangle^r\,;\,\phi_x\right]\varphi_{xx}$, we use \eqref{stima_comm_ds} of Lemma \ref{commutatore_ds} with $v=\phi_x=\mathbb H[\varphi_x]$ and $f=\varphi_{xx}$ together with the $H^r-$continuity of $\mathbb H$; then we find
\begin{equation}\label{stima_J3_2}
\begin{split}
\Vert\left[\langle\partial_x\rangle^r\,;\,\phi_x\right]\varphi_{xx}\Vert_{L^2(\mathbb T)}&\le C_r\left\{\Vert\phi_x\Vert_{H^r(\mathbb T)}\Vert\varphi_{xx}\Vert_{H^1(\mathbb T)}+\Vert\phi_{xx}\Vert_{H^1(\mathbb T)}\Vert\varphi_{xx}\Vert_{H^{r-1}(\mathbb T)}\right\}\\
&\le C_r\Vert\varphi_x\Vert_{H^r(\mathbb T)}\Vert\varphi_x\Vert_{H^2(\mathbb T)}=C_r\Vert\psi_x\Vert_{L^2(\mathbb T)}\Vert\varphi_x\Vert_{H^2(\mathbb T)}\,,
\end{split}
\end{equation}
recall that $\Vert\varphi\Vert_{H^r(\mathbb T)}=\Vert\langle\partial_x\rangle^r\varphi\Vert_{L^2(\mathbb T)}=\Vert\psi\Vert_{L^2(\mathbb T)}$ (cf. \eqref{normaHs}, \eqref{derivatas}). Combining \eqref{stima_J3_1}, \eqref{stima_J3_2} yields
\begin{equation}\label{stima_J3}
\vert J_3\vert\le C_r\Vert\varphi_x\Vert_{H^2(\mathbb T)}\Vert\psi_x\Vert_{L^2(\mathbb T)}\Vert\psi_t\Vert_{L^2(\mathbb T)}\,.
\end{equation}

{\it Estimate of $J_4$:} using H\"older's inequality and Lemma \ref{lemma_stima_quadr} to estimate the $H^r-$norm of $\mathcal Q\left[\varphi\right]$ we get
\begin{equation}\label{stima_J4}
\begin{split}
\vert J_4\vert\le \int_{\mathbb T}\vert\langle\partial_x\rangle^r\mathcal Q\left[\varphi\right]\vert\vert\psi_t\vert\,dx\le \Vert \langle\partial_x\rangle^r\mathcal Q\left[\varphi\right]\Vert_{L^2(\mathbb T)}\Vert\psi_t\Vert_{L^2(\mathbb T)}&=\Vert \mathcal Q\left[\varphi\right]\Vert_{H^r(\mathbb T)}\Vert\psi_t\Vert_{L^2(\mathbb T)}\\
&\le C\Vert\varphi_x\Vert_{H^2(\mathbb T)}\Vert\psi_x\Vert_{L^2(\mathbb T)}\Vert\psi_t\Vert_{L^2(\mathbb T)}\,.
\end{split}
\end{equation}

Gathering estimates \eqref{stima_J1}--\eqref{stima_J4} and using Cauchy--Schwarz inequality, the right-hand side of \eqref{equpsi_int} is estimated as
\begin{equation}\label{stima_J1-4}
\begin{split}
\left\vert\sum\limits_{k=1}^4 J_k\right\vert&\le C_r\{\Vert\varphi_x\Vert_{H^2(\mathbb T)}\Vert\psi_x\Vert_{L^2(\mathbb T)}\Vert\psi_t\Vert_{L^2(\mathbb T)}+\Vert\varphi_{t}\Vert_{H^2(\mathbb T)}\Vert\psi_x\Vert^2_{L^2(\mathbb T)}\}\\
&\le C_r\{\Vert\varphi_x\Vert_{H^2(\mathbb T)}+\Vert\varphi_{t}\Vert_{H^2(\mathbb T)}\}\{\Vert\psi_x\Vert^2_{L^2(\mathbb T)}+\Vert\psi_t\Vert^2_{L^2(\mathbb T)}\}\\
&\le C_r\left\{\Vert\varphi_x\Vert^2_{H^2(\mathbb T)}+\Vert\varphi_{t}\Vert^2_{H^2(\mathbb T)}\right\}^{1/2}\{\Vert\psi_x\Vert^2_{L^2(\mathbb T)}+\Vert\psi_t\Vert^2_{L^2(\mathbb T)}\}\,.
\end{split}
\end{equation}
Hence from \eqref{equpsi_int_1} and \eqref{stima_J1-4} we get
\begin{equation}\label{stima_equpsi}
\frac{d}{dt}\left(\Vert\psi_t(t)\Vert^2_{L^2(\mathbb T)}+\int_{\mathbb T}\left(\mu-2\phi_x\right)\vert\psi_{x}\vert^2\,dx\right)\le C_r\left\{\Vert\varphi_x\Vert^2_{H^2(\mathbb T)}+\Vert\varphi_{t}\Vert^2_{H^2(\mathbb T)}\right\}^{1/2}\{\Vert\psi_x\Vert^2_{L^2(\mathbb T)}+\Vert\psi_t\Vert^2_{L^2(\mathbb T)}\}\,.
\end{equation}

Now we have to treat differently two cases, corresponding to statement (1) ($r=2$) and (2) (arbitrary $r\geq 2$).

\vspace{.5cm}

{\it Case $r=2$.} From \eqref{derivatas} and \eqref{normaHs} with $r=2$, we first observe that
\begin{equation}\label{identita_norme}
\Vert\psi_x\Vert_{L^2(\mathbb T)}=\Vert\langle\partial_x\rangle^2\varphi_x\Vert_{L^2(\mathbb T)}=\Vert\varphi_x\Vert_{H^2(\mathbb T)}\qquad \Vert\psi_t\Vert_{L^2(\mathbb T)}=\Vert\langle\partial_x\rangle^2\varphi_t\Vert_{L^2(\mathbb T)}=\Vert\varphi_t\Vert_{H^2(\mathbb T)}\,,
\end{equation}
hence \eqref{stima_equpsi} becomes
\begin{equation}\label{stima_equpsi_s2}
\frac{d}{dt}\left(\Vert\varphi_t(t)\Vert^2_{H^2(\mathbb T)}+\int_{\mathbb T}\left(\mu-2\phi_x\right)\vert\langle\partial_x\rangle^2\varphi_{x}\vert^2\,dx\right)\le C_2\left\{\Vert\varphi_x\Vert^2_{H^2(\mathbb T)}+\Vert\varphi_{t}\Vert^2_{H^2(\mathbb T)}\right\}^{3/2}\,.
\end{equation}

Using the sign condition \eqref{sign-cond-sol}, we  further estimate  the $H^2-$norm of $\varphi_x$ in the right-hand side above by
\begin{equation}\label{stima_phix}
\begin{split}
\Vert \varphi_x(t)\Vert^2_{H^2(\mathbb T)}& =\Vert\langle\partial_x\rangle^2\varphi_x(t)\Vert^2_{L^2(\mathbb T)}=
\int_{\mathbb T}\vert\langle\partial_x\rangle^2\varphi_x(t)\vert^2\,dx
=\frac2{\delta}\int_{\mathbb T}\frac{\delta}{2}\vert\langle\partial_x\rangle^2\varphi_x(t)\vert^2\,dx\\
&\le \frac2{\delta}\int_{\mathbb T}\left(\mu-2\phi_x(t)\right)\vert\langle\partial_x\rangle^2\varphi_x(t)\vert^2\,dx\,,\quad\mbox{for}\,\,0\le t\le T\,.
\end{split}
\end{equation}
From \eqref{stima_equpsi_s2}, \eqref{stima_phix} and setting
\begin{equation}\label{defv}
y^2(t):=\Vert\varphi_t(t)\Vert^2_{H^2(\mathbb T)}+\int_{\mathbb T}\left(\mu-2\phi_x(t)\right) \vert\langle\partial_x\rangle^2\varphi_x(t)\vert^2\,dx\,,
\end{equation}
we get
\begin{equation}\label{stima_v}
\frac{dy^2(t)}{dt}=2y(t)\frac{dy(t)}{dt}\le C_2\max\left\{1,\frac{2}{\delta}\right\}^{\frac32}\, y^3(t)\,,\quad\mbox{for}\,\,0\le t\le T\,.
\end{equation}
If we assume, without loss of generality,  that $y(t)>0$ on $[0,T]$, from \eqref{stima_v} we get
\begin{equation}\label{dis_y}
\frac{d}{dt}{y(t)}\le C_\delta y^2(t)\,,\quad\mbox{for}\,\,0\le t\le T\,,
\end{equation}
where it is set
\begin{equation}\label{Cdelta}
C_\delta:=\frac{C_2\max\left\{1,\frac{2}{\delta}\right\}^{3/2}}{2}\,,
\end{equation}
and $C_2$ is the purely numerical constant involved in \eqref{stima_equpsi_s2}.

From \eqref{dis_y}, we derive that $y(t)$ satisfies
\begin{equation}\label{stima_y}
y(t)\leq \frac{y(0)}{1-C_\delta ty(0)}\,, \quad \mbox{as long as}\,\, t<\frac{1}{C_\delta y(0)}\,.
\end{equation}
From Sobolev imbedding $H^1(\mathbb T)\hookrightarrow L^\infty(\mathbb T)$, we have (recall that $\mu>0$)
\begin{equation*}
\mu-2\phi_x(0)\leq \mu+ 2\Vert\phi_x(0)\Vert_{L^\infty(\mathbb T)}\leq \mu+ C\Vert\phi_x(0)\Vert_{H^1(\mathbb T)}\,,
\end{equation*}
hence for $0<R\le 1$ satisfying \eqref{ip:3} and in view of \eqref{rho0}
\begin{equation}\label{stimaH3_2}
\begin{split}
y^2(0)&=\int_{\mathbb T}\left(\mu-2\phi_x(0)\right)\vert\langle\partial_x\rangle^2 \varphi_x(0)\vert^2\,dx+\Vert \varphi_t(0)\Vert^2_{H^2(\mathbb T)}\\
&\leq \left(\mu+ C\Vert\phi_x(0)\Vert_{H^1(\mathbb T)}\right)\Vert\varphi_x(0)\Vert^2_{H^2(\mathbb T)} + \Vert\varphi_t(0)\Vert^2_{H^2(\mathbb T)}\\
&\le \left(\mu+ C\Vert\varphi_x(0)\Vert_{H^2(\mathbb T)}\right)\Vert\varphi_x(0)\Vert^2_{H^2(\mathbb T)}+\Vert\varphi_t(0)\Vert^2_{H^2(\mathbb T)}\\
&\le (\mu+ C R)R^2+R^2\le (1+\mu+C)R^2=\rho^2\,.
\end{split}
\end{equation}
Because of \eqref{ip:1}, from \eqref{stima_y} we have
\begin{equation}\label{stima_y1}
y(t)\leq \frac{y(0)}{1-C_\delta T\rho}\leq 2y(0)\le 2\rho\,,\quad\forall\, t\in[0,T]\,.
\end{equation}
From \eqref{stima_phix} we find
\begin{equation}\label{stimaH3_1}
\min\left\{\frac{\delta}{2}, 1\right\} \left\{\Vert\varphi_x(t)\Vert^2_{H^2(\mathbb T)}+ \Vert\varphi_t(t)\Vert^2_{H^2(\mathbb T)}\right\}\leq y^2(t)\,.
\end{equation}
From \eqref{stimaH3_1} and \eqref{stima_y1} we finally find
\begin{equation}\label{stimaH3_4}
\Vert\varphi_x(t)\Vert^2_{H^2(\mathbb T)}+ \Vert\varphi_t(t)\Vert^2_{H^2(\mathbb T)}
\leq\frac{4\rho^2}{\min\{\frac{\delta}{2}, 1\}}\,,
\end{equation}
which provides the estimate \eqref{stima_H2}.
\vspace{.5cm}

{\it Case of general $r\geq 2$.} We come back to estimate \eqref{stima_equpsi}. From the sign condition \eqref{sign-cond-sol}, exactly as in \eqref{stima_phix} we get
\begin{equation}\label{stima_psix}
\Vert \psi_x(t)\Vert^2_{L^2(\mathbb T)}\le \frac2{\delta}\int_{\mathbb T}\left(\mu-2\phi_x(t)\right)\vert\psi_x(t)\vert^2\,dx\,,\quad \forall\,  t\in[0, T]\,.
\end{equation}
From \eqref{stima_equpsi} and \eqref{stima_psix} we obtain that
\begin{equation*}
z(t):=\Vert\psi_t(t)\Vert^2_{L^2(\mathbb T)}+\int_{\mathbb T}\left(\mu-2\phi_x(t)\right)\vert\psi_{x}(t)\vert^2\,dx
\end{equation*}
satisfies
\begin{equation}\label{stima_s2}
\frac{d}{dt} z(t) \leq C_r\max\left\{\frac{2}{\delta},1\right\} \left( \Vert\varphi_x(t)\Vert^2_{H^2(\mathbb T)}+ \Vert\varphi_t(t)\Vert^2_{H^2(\mathbb T)}\right)^{1/2}z(t)\,, \quad \forall\, t\in[0,T]\,.
\end{equation}
By Gr\"onwall's lemma we find
\begin{equation}\label{gronwall1}
\begin{split}
z(t)\leq z(0)e^{C_r\max\{\frac{2}{\delta},1\}\int_0^{T} \left( \Vert\varphi_x(\tau)\Vert^2_{H^2(\mathbb T)}+ \Vert\varphi_t(\tau)\Vert^2_{H^2(\mathbb T)}\right)^{1/2}\,d\tau}\,.
\end{split}
\end{equation}
Then from \eqref{ip:1} and \eqref{stimaH3_4} we have
\begin{equation}\label{stima_int}
\int_0^{T} \left( \Vert\varphi_x(\tau)\Vert^2_{H^2(\mathbb T)}+ \Vert\varphi_t(\tau)\Vert^2_{H^2(\mathbb T)}\right)^{1/2}\,d\tau\le \frac{2\rho}{\left(\min\left\{\frac{\delta}{2}, 1\right\}\right)^{1/2}}T\le \frac{1}{C_\delta\left(\min\left\{\frac{\delta}{2}, 1\right\}\right)^{1/2}}\,,
\end{equation}
which yields, using also \eqref{Cdelta} and the trivial identity $\max\left\{\frac{2}{\delta},1\right\}=\frac1{\min\left\{\frac{\delta}{2},1\right\}}$,
\begin{equation}\label{esponente}
\begin{split}
\max\{\frac{2}{\delta},1\}&\int_0^{T} \left( \Vert\varphi_x(\tau)\Vert^2_{H^2(\mathbb T)}+ \Vert\varphi_t(\tau)\Vert^2_{H^2(\mathbb T)}\right)^{1/2}\,d\tau\le \frac{\max\{\frac{2}{\delta},1\}}{C_\delta\left(\min\left\{\frac{\delta}{2}, 1\right\}\right)^{1/2}}\\
&=\frac{2\max\{\frac{2}{\delta},1\}}{C_2\max\left\{\frac{2}{\delta},1\right\}^{3/2}\left(\min\left\{\frac{\delta}{2}, 1\right\}\right)^{1/2}}=\frac{2}{C_2}\,.
\end{split}
\end{equation}

Noticing also that, being $0<R\le 1$,
\begin{equation}\label{z0}
\begin{split}
z(0)&:=\Vert\psi_t(0)\Vert^2_{L^2(\mathbb T)}+\int_{\mathbb T}\left(\mu-2\phi_x(0)\right)\vert\psi_{x}(0)\vert^2\,dx\leq \Vert\varphi_t(0)\Vert^2_{H^r(\mathbb T)} +\left(\mu+ C \Vert\varphi_x(0)\Vert_{H^1(\mathbb T)}\right)\Vert\varphi_x(0)\Vert^2_{H^r(\mathbb T)}\\
&\le \Vert\varphi_t(0)\Vert^2_{H^r(\mathbb T)} +\left(\mu+C R\right)\Vert\varphi_x(0)\Vert^2_{H^r(\mathbb T)}\\
&\le(1+\mu+C)\left\{\Vert\varphi_x(0)\Vert^2_{H^r(\mathbb T)}+ \Vert\varphi_t(0)\Vert^2_{H^r(\mathbb T)}\right\}\,.
\end{split}
\end{equation}
Using \eqref{esponente} and \eqref{z0} to bound the right-hand side of \eqref{gronwall1} we find
\begin{equation}\label{gronwall2}
z(t)\leq (1+\mu+C)e^{\widetilde C_r}\left\{\Vert\varphi_x(0)\Vert^2_{H^r(\mathbb T)}+ \Vert\varphi_t(0)\Vert^2_{H^r(\mathbb T)}\right\}\,,
\end{equation}
being $\widetilde C_r:=\frac{2C_r}{C_2}$.

We conclude as in the case $r=2$, by noticing that
\begin{equation*}
\min\left\{\frac{\delta}{2}, 1\right\} \left\{\Vert\varphi_x(t)\Vert^2_{H^r(\mathbb T)}+ \Vert\varphi_t(t)\Vert^2_{H^r(\mathbb T)}\right\}\leq z(t)\,,
\end{equation*}
which yields the estimate
\begin{equation}\label{stima_s>2}
\Vert\varphi_x(t)\Vert^2_{H^r(\mathbb T)}+ \Vert\varphi_t(t)\Vert^2_{H^r(\mathbb T)}
\leq \frac{(1+\mu+C)e^{\widetilde C_r}}{\min\left\{\frac{\delta}{2}, 1\right\}}\left\{\Vert\varphi_x(0)\Vert^2_{H^r(\mathbb T)}+ \Vert\varphi_t(0)\Vert^2_{H^r(\mathbb T)}\right\}\,.
\end{equation}
The latter is exactly the estimate \eqref{stima_Hs}.
\end{proof}

\section{Uniqueness of the solution}\label{sec_uniqueness}
We first manage to prove the uniqueness of the solution to the problem \eqref{onde_integro_diff}, \eqref{id}. Let  $\left(\varphi^{(0)},\varphi^{(1)}\right)\in H^3(\mathbb T)\times H^2(\mathbb T)$ satisfy \eqref{ip_dati_iniziali}. Assume that $\varphi$ and $\theta$ are two solutions of \eqref{onde_integro_diff}, \eqref{id} with the same initial data $\left(\varphi^{(0)},\varphi^{(1)}\right)$, defined on the same interval $[0,T]$ such that
 \begin{equation*}
 1-C_\delta T\rho_0\geq \frac{1}{2}
 \end{equation*}
 where
 \begin{equation}\label{rho0_1}
 \rho_0^2=(1+\mu+C)R_0^2,
 \end{equation}
  with zero spatial mean and  satisfying \eqref{sign-cond-sol}. The equation satisfied by the difference $v:=\varphi-\theta$ is
\begin{equation}\label{eq_differenza1}
v_{tt}-(\mu-2\phi_x)v_{xx} +2(\phi_x-\Theta_x)\theta_{xx} +\mathcal Q[\varphi]-\mathcal Q[\theta]=0\,.
\end{equation}
where
\begin{equation*}
\phi:=\mathbb H[\varphi], \quad \Theta:=\mathbb H[\theta]\,
\end{equation*}
and the nonlinear operator $\mathcal Q$ is defined in \eqref{termine_nl}.
Moreover by easy calculations we may restate the nonlinear term $\mathcal Q[\varphi]-\mathcal Q[\theta]$ in terms of the difference $v=\varphi-\theta$ as
\begin{equation*}
\begin{split}
\mathcal Q[\varphi]-\mathcal Q[\theta]&=-3\left[\mathbb H;\phi_x-\Theta_x\right]\phi_{xx}-\left[\mathbb H; \phi-\Theta\right]\phi_{xxx}-3\left[\mathbb H;\Theta_x\right]\left(\phi_{xx}-\Theta_{xx}\right)-\left[\mathbb H; \Theta\right]\left(\phi_{xxx}- \Theta_{xxx}\right)\\
&=-3\left[\mathbb H;V_x\right]\phi_{xx}-\left[\mathbb H;V\right]\phi_{xxx}-3\left[\mathbb H;\Theta_x\right]V_{xx}-\left[\mathbb H; \Theta\right]V_{xxx}\,,
\end{split}
\end{equation*}
where it is set
\begin{equation*}
V:=\mathbb H[v]\,.
\end{equation*}
Hence the equation \eqref{eq_differenza1} becomes
\begin{equation}\label{eq_differenza2}
v_{tt}-(\mu-2\phi_x)v_{xx} +2V_x\theta_{xx}-3\left[\mathbb H;V_x\right]\phi_{xx}-\left[\mathbb H;V\right]\phi_{xxx}-3\left[\mathbb H;\Theta_x\right]V_{xx}-\left[\mathbb H; \Theta\right]V_{xxx}=0\,.
\end{equation}
Following the same calculations in the proof of Theorem \ref{mainHs} (see \eqref{int1e2}), we multiply by $v$ the equation \eqref{eq_differenza2} and integrate by parts to get
\begin{equation}\label{eq_differenza3}
\begin{split}
\frac12\frac{d}{dt}&\left(\Vert v_t\Vert^2_{L^2(\mathbb T)}+\int_{\mathbb T}(\mu-2\phi_x)\vert v_x\vert^2\,dx\right)\\
&=-\int_{\mathbb T}\phi_{xt}\vert v_x\vert^2\,dx+2\int_{\mathbb T}\phi_{xx}v_tv_x\,dx-2\int_{\mathbb T}\theta_{xx}V_xv_t\,dx+3\int_{\mathbb T}\left[\mathbb H;V_x\right]\phi_{xx} v_t\,dx\\
&\quad+\int_{\mathbb T}\left[\mathbb H;V\right]\phi_{xxx}v_t\,dx+3\int_{\mathbb T}\left[\mathbb H;\Theta_x\right]V_{xx}v_t\,dx+\int_{\mathbb T}\left[\mathbb H; \Theta\right]V_{xxx}v_t\,dx=\sum\limits_{k=1}^7\mathcal J_k\,.
\end{split}
\end{equation}
Let us now provide a suitable estimate of each of the integral terms $\mathcal J_k$, $k=1,\dots,7$. We recall that from Theorem \ref{mainHs} we know that
\begin{equation}\label{stima_differenza}
\Vert\varphi_x(t)\Vert^2_{H^{2}(\mathbb T)}+\Vert\varphi_t(t)\Vert^2_{H^2(\mathbb T)}\le M\,,\quad \Vert\theta_x(t)\Vert^2_{H^{2}(\mathbb T)}+\Vert\theta_t(t)\Vert^2_{H^2(\mathbb T)}\le M\,,\quad 0\le t\le T\,,
\end{equation}
where for simplicity we have set
\begin{equation}\label{M0}
M:=\frac{4\rho_0^2}{\min\{\frac{\delta}{2}, 1\}}\,,
\end{equation}
with $\rho_0$ defined from $R_0$ in \eqref{rho0_1}.

The estimates of $\mathcal J_1,\dots,\mathcal J_4$ below follow from a straightforward application of H\"older's inequality, together with the Sobolev imbedding $H^1(\mathbb T)\hookrightarrow L^\infty(\mathbb T)$, the Sobolev continuity of $\mathbb H$ and estimates \eqref{stima_differenza}

{\it Estimate of $\mathcal J_1$:}
\begin{equation}\label{stima_J1_unicita}
\begin{split}
\vert\mathcal J_1\vert &\le \Vert\phi_{xt}\Vert_{L^\infty(\mathbb T)}\Vert v_x\Vert_{L^2(\mathbb T)}^2\le C\Vert\varphi_{xt}\Vert_{H^1(\mathbb T)}\Vert v_x\Vert_{L^2(\mathbb T)}^2\\
&\le C\Vert\varphi_{t}\Vert_{H^2(\mathbb T)}\Vert v_x\Vert_{L^2(\mathbb T)}^2\le C\sqrt{M}\Vert v_x\Vert_{L^2(\mathbb T)}^2\,.
\end{split}
\end{equation}

{\it Estimate of $\mathcal J_2$:}
\begin{equation}\label{stima_J2_unicita}
\begin{split}
\vert\mathcal J_2\vert &\le 2\Vert\phi_{xx}\Vert_{L^\infty(\mathbb T)}\Vert v_x\Vert_{L^2(\mathbb T)}\Vert v_t\Vert_{L^2(\mathbb T)}\le C\Vert\varphi_{x}\Vert_{H^2(\mathbb T)}\Vert v_x\Vert_{L^2(\mathbb T)}\Vert v_t\Vert_{L^2(\mathbb T)}\\
&\le C\sqrt{M}\Vert v_x\Vert_{L^2(\mathbb T)}\Vert v_t\Vert_{L^2(\mathbb T)}\,.
\end{split}
\end{equation}

{\it Estimate of $\mathcal J_3$:}
\begin{equation}\label{stima_J3_unicita}
\begin{split}
\vert\mathcal J_3\vert &\le 2\Vert\theta_{xx}\Vert_{L^\infty(\mathbb T)}\Vert V_x\Vert_{L^2(\mathbb T)}\Vert v_t\Vert_{L^2(\mathbb T)}\le C\Vert\theta_{x}\Vert_{H^2(\mathbb T)}\Vert v_x\Vert_{L^2(\mathbb T)}\Vert v_t\Vert_{L^2(\mathbb T)}\\
&\le C\sqrt{M}\Vert v_x\Vert_{L^2(\mathbb T)}\Vert v_t\Vert_{L^2(\mathbb T)}\,.
\end{split}
\end{equation}

{\it Estimate of $\mathcal J_4$:} writing explicitly the involved commutator and using formulas \eqref{calcolo}, \eqref{integraleH} the integral $\mathcal J_4$ can be restated as
\begin{equation*}
\begin{split}
\mathcal J_4 &=3\int_{\mathbb T}\left[\mathbb H;V_x\right]\phi_{xx} v_t\,dx=3\int_{\mathbb T}\mathbb H[V_x\phi_{xx}]v_t\,dx-3\int_{\mathbb T}V_x\mathbb H[\phi_{xx}]v_t\,dx\\
&=-3\int_{\mathbb T}V_x\phi_{xx}V_t\,dx+3\int_{\mathbb T}V_x\varphi_{xx}v_t\,dx\,.
\end{split}
\end{equation*}
Then we get
\begin{equation}\label{stima_J4_unicita}
\begin{split}
\vert\mathcal J_4\vert &\le 3\Vert\phi_{xx}\Vert_{L^\infty(\mathbb T)}\Vert V_x\Vert_{L^2(\mathbb T)}\Vert V_t\Vert_{L^2(\mathbb T)}+3\Vert\varphi_{xx}\Vert_{L^\infty(\mathbb T)}\Vert V_x\Vert_{L^2(\mathbb T)}\Vert v_t\Vert_{L^2(\mathbb T)}\\
&\le C\Vert\varphi_{x}\Vert_{H^2(\mathbb T)}\Vert v_x\Vert_{L^2(\mathbb T)}\Vert v_t\Vert_{L^2(\mathbb T)}\le C\sqrt{M}\Vert v_x\Vert_{L^2(\mathbb T)}\Vert v_t\Vert_{L^2(\mathbb T)}\,.
\end{split}
\end{equation}

As for the integrals $\mathcal J_5$, $\mathcal J_6$, $\mathcal J_7$ their estimates follow from the commutator estimates collected in lemmas \ref{lemma_comm}, \ref{lemma_comm_ale_2}.

{\it Estimate of $\mathcal J_5$:} applying the first estimate of Lemma \ref{lemma_comm} (with $\sigma=1$), using Poincar\'e's inequality on $v$ (because $v$ has zero spatial mean) and, for the rest, the same arguments employed to prove the estimates of $\mathcal J_1,\dots,\mathcal J_4$ above we get
\begin{equation}\label{stima_J5_unicita}
\begin{split}
\vert\mathcal J_5\vert &\le \Vert\left[\mathbb H\,;\,V\right]\phi_{xxx}\Vert_{L^2(\mathbb T)}\Vert v_t\Vert_{L^2(\mathbb T)}\le C\Vert V\Vert_{H^1(\mathbb T)}\Vert \phi_{xxx}\Vert_{L^2(\mathbb T)}\Vert v_t\Vert_{L^2(\mathbb T)}\\
&\le C\Vert \varphi_{x}\Vert_{H^2(\mathbb T)}\Vert v\Vert_{H^1(\mathbb T)}\Vert v_t\Vert_{L^2(\mathbb T)}\le C\Vert \varphi_{x}\Vert_{H^2(\mathbb T)}\Vert v_x\Vert_{L^2(\mathbb T)}\Vert v_t\Vert_{L^2(\mathbb T)}\\
&\le C\sqrt{M}\Vert v_x\Vert_{L^2(\mathbb T)}\Vert v_t\Vert_{L^2(\mathbb T)}\,.
\end{split}
\end{equation}

{\it Estimate of $\mathcal J_6$:} applying the second estimate of Lemma \ref{lemma_comm} (with $\sigma=1$), as in the case of $\mathcal J_5$ we get
\begin{equation}\label{stima_J6_unicita}
\begin{split}
\vert\mathcal J_6\vert &\le 3\Vert\left[\mathbb H\,;\,\Theta_x\right]V_{xx}\Vert_{L^2(\mathbb T)}\Vert v_t\Vert_{L^2(\mathbb T)}\le C\Vert\Theta_{xx}\Vert_{H^1(\mathbb T)}\Vert V_{x}\Vert_{L^2(\mathbb T)}\Vert v_t\Vert_{L^2(\mathbb T)}\\
&\le C\Vert \theta_{x}\Vert_{H^2(\mathbb T)}\Vert v_x\Vert_{L^2(\mathbb T)}\Vert v_t\Vert_{L^2(\mathbb T)}\le C\sqrt{M}\Vert v_x\Vert_{L^2(\mathbb T)}\Vert v_t\Vert_{L^2(\mathbb T)}\,.
\end{split}
\end{equation}

{\it Estimate of $\mathcal J_7$:} applying the first estimate of Lemma \ref{lemma_comm_ale_2} (with $\sigma=0$ and $p=3$), Poincar\'e's inequality on $v$ and then arguing as in the case of $\mathcal J_5$, $\mathcal J_6$ we get
\begin{equation}\label{stima_J7_unicita}
\begin{split}
\vert\mathcal J_7\vert &\le \Vert\left[\mathbb H\,;\,\Theta\right]V_{xxx}\Vert_{L^2(\mathbb T)}\Vert v_t\Vert_{L^2(\mathbb T)}\le C\Vert\Theta_{xxx}\Vert_{L^2(\mathbb T)}\Vert V\Vert_{H^1(\mathbb T)}\Vert v_t\Vert_{L^2(\mathbb T)}\\
&\le C\Vert \theta_{x}\Vert_{H^2(\mathbb T)}\Vert v\Vert_{H^1(\mathbb T)}\Vert v_t\Vert_{L^2(\mathbb T)}\le C\Vert \theta_{x}\Vert_{H^2(\mathbb T)}\Vert v_x\Vert_{L^2(\mathbb T)}\Vert v_t\Vert_{L^2(\mathbb T)}\\
&\le C\sqrt{M}\Vert v_x\Vert_{L^2(\mathbb T)}\Vert v_t\Vert_{L^2(\mathbb T)}\,.
\end{split}
\end{equation}

Gathering the above estimates \eqref{stima_J1_unicita}-- \eqref{stima_J7_unicita}, from \eqref{eq_differenza3} we derive immediately
\begin{equation}\label{eq_differenza4}
\begin{split}
\frac12\frac{d}{dt}&\left(\Vert v_t\Vert^2_{L^2(\mathbb T)}+\int_{\mathbb T}(\mu-2\phi_x)\vert v_x\vert^2\,dx\right)
\le C\sqrt{M}\left\{\Vert v_x\Vert_{L^2(\mathbb T)}^2+\Vert v_t\Vert_{L^2(\mathbb T)}^2\right\}\,,\quad\mbox{on}\,\,[0,T]\,.
\end{split}
\end{equation}
If we exploit once again the sign condition \eqref{sign-cond-sol} for $\varphi$ to get the estimate
\begin{equation}\label{stima_vx}
\Vert v_x(t)\Vert^2_{L^2(\mathbb T)}\le \frac2{\delta}\int_{\mathbb T}\left(\mu-2\phi_x(t)\right)\vert v_x(t)\vert^2\,dx\,,\quad \forall\,  t\in[0, T]\,,
\end{equation}
from \eqref{eq_differenza4}, \eqref{stima_vx} we find for
\begin{equation*}
w(t):=\Vert v_t\Vert^2_{L^2(\mathbb T)}+\int_{\mathbb T}(\mu-2\phi_x)\vert v_x\vert^2\,dx
\end{equation*}
the following estimate
\begin{equation*}\label{eq_differenza5}
\frac{d}{dt}w(t)\le C\sqrt M w(t)\,,\quad\mbox{on}\,\,[0,T]\,,
\end{equation*}
from which a straightforward application of Gr\"onwall's lemma implies the bound
\begin{equation*}\label{eq_differenza6}
w(t)\le w(0)e^{C\sqrt M t}\le w(0)e^{C\sqrt M T} \,,\quad\mbox{on}\,\,[0,T]\,.
\end{equation*}
Since $v_x(0)=v_t(0)=0$ yields that $w(0)=0$, then
\begin{equation}\label{identita_w}
w(t)\equiv 0\,,\quad\mbox{on}\,\,[0,T]
\end{equation}
follows. Because of the definition of $w$ and the sign condition on $\varphi$ (cf. \eqref{sign-cond-sol}), from \eqref{identita_w} we derive that
\begin{equation*}
v_x(t)=v_t(t)=0\,,\quad\mbox{on}\,\,[0,T]
\end{equation*}
which also gives
\begin{equation*}
v(t)=0\,,\quad\mbox{on}\,\,[0,T]\,,
\end{equation*}
in view of Poincar\'e lemma. This ends the proof of the uniqueness.


\section{Proof of Theorem \ref{th_esistenza}}\label{sec_ex_uniq}
The proof of our main Theorem \ref{th_esistenza} relies on a previous existence result for the same problem \eqref{onde_integro_diff}, \eqref{id}, proved in \cite{M-S-T:ONDE1}. For reader's convenience, we recall below the statement of this result.
\begin{theorem}\label{nonlin_th}
(cf. \cite{M-S-T:ONDE1}) Let $\mu>\delta>0$.
\begin{itemize}
\item[(1)] Assume that $\varphi^{(0)}\in H^{11}(\mathbb T)$, $\varphi^{(1)}\in H^{10}(\mathbb T)$ have zero spatial mean and satisfy
    \begin{equation}\label{sign-cond-id}
    \mu-2\mathbb H[\varphi^{(0)}]_x\ge\delta\,,\quad\mbox{in}\,\,\mathbb T\,.
    \end{equation}
    Then there exists $T>0$, depending only on $\Vert\varphi^{(0)}\Vert_{H^{11}(\mathbb T)}$, $\Vert\varphi^{(1)}\Vert_{H^{10}(\mathbb T)}$, $\mu$, $\delta$, such that the initial value problem \eqref{onde_integro_diff}, \eqref{id} with initial data $\varphi^{(0)}$, $\varphi^{(1)}$ admits a unique solution $\varphi$ on $[0,T]$, with zero spatial mean, satisfying
    \begin{equation*}
    \varphi\in L^2(0, T; H^{9}(\mathbb T))\cap H^1(0, T; H^{8}(\mathbb T))\cap H^2(0, T; H^{7}(\mathbb T))\,,
    \end{equation*}
\begin{equation}\label{sign_cond_nonlin}
\mu- 2\mathbb{H}[\varphi]_x\geq \delta/2 \quad {\rm in}\,\,[0,T]\times\mathbb{T}\, .
    \end{equation}
\item[(2)] If $\nu>10$ and $\varphi^{(0)}\in H^{\nu+1}(\mathbb T)$, $\varphi^{(1)}\in H^{\nu}(\mathbb T)$, with zero spatial mean, satisfy condition \eqref{sign-cond-id} then the solution $\varphi$ of \eqref{onde_integro_diff}, \eqref{id} with initial data $\varphi^{(0)}$, $\varphi^{(1)}$, considered in the statement (1), satisfies
    \begin{equation*}
    \varphi\in L^2(0, T; H^{\nu-1}(\mathbb T))\cap H^1(0, T; H^{\nu-2}(\mathbb T))\cap H^2(0, T; H^{\nu-3}(\mathbb T))\,.
    \end{equation*}
\end{itemize}
\end{theorem}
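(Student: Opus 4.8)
The plan is to solve the nonlinear Cauchy problem by a Nash--Moser--H\"ormander iteration built on the linearized equation, since the quasilinear nonlocal structure of \eqref{onde_integro_diff} produces a loss of derivatives that a naive Picard scheme cannot absorb. First I would recast the problem as $F(\varphi)=0$, with
\[
F(\varphi):=\varphi_{tt}-(\mu-2\phi_x)\varphi_{xx}+\mathcal Q[\varphi],\qquad \phi=\mathbb H[\varphi],
\]
supplemented by the data \eqref{id}. To incorporate the initial conditions I would subtract an approximate solution $\varphi_a$ (a finite Taylor expansion in $t$ assembled from $\varphi^{(0)},\varphi^{(1)}$ by repeated use of the equation) so that the residual $F(\varphi_a)$ vanishes to high order at $t=0$; writing $\varphi=\varphi_a+\psi$ reduces matters to solving $\Phi(\psi)=0$ with $\psi$ and $\psi_t$ vanishing at $t=0$. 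Throughout I would keep the stability margin $\mu-2\mathbb H[\varphi]_x\ge\delta/2$ as a running hypothesis, guaranteed on a short time interval by continuity from \eqref{sign-cond-id}.

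The core is the analysis of the linearized operator at a state $\bar\varphi$ obeying the margin, with $\bar\phi:=\mathbb H[\bar\varphi]$. Linearizing $F$ gives
\[
\mathbb L(\bar\varphi)v=v_{tt}-(\mu-2\bar\phi_x)v_{xx}+2\,\bar\varphi_{xx}\,\mathbb H[v_x]+\mathcal Q'[\bar\varphi]v,
\]
where the commutator cancellation already exploited in passing from \eqref{onde_integro_diff} to \eqref{equ1terter} keeps $\mathbb L(\bar\varphi)$ genuinely second order, with positive principal coefficient $\mu-2\bar\phi_x\ge\delta/2$. The a priori estimate is obtained as in the proof of Theorem \ref{mainHs}: apply $\langle\partial_x\rangle^s$, multiply by the time derivative of $\langle\partial_x\rangle^s v$, integrate over $\mathbb T$, use the sign condition to control $\|v_x\|_{H^s}^2$ by the weighted energy, and bound the nonlocal remainders by the Hilbert-transform commutator estimates of Appendix \ref{stima_commutatore}. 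Commuting $\langle\partial_x\rangle^s$ past the variable coefficient $\mu-2\bar\phi_x$ produces a commutator $[\langle\partial_x\rangle^s;\bar\phi_x]v_{xx}$ that costs derivatives of $\bar\varphi$, yielding a \emph{tame} estimate with a fixed finite loss, of the schematic form
\[
\|v\|_{s}\lesssim \|\mathbb L(\bar\varphi)v\|_{s+\ell}+\|\bar\varphi\|_{s+\ell}\,\|\mathbb L(\bar\varphi)v\|_{s_0},
\]
the loss $\ell$ being responsible for the two-derivative gap in the statement. Existence for the linear problem follows by Galerkin truncation in Fourier modes together with this estimate (and its dual version), and uniqueness from the energy estimate exactly as in Section \ref{sec_uniqueness}, giving a bounded, tame right inverse of $\mathbb L(\bar\varphi)$.

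With the tame right inverse available I would run H\"ormander's iteration: introduce smoothing operators $S_\theta$ on $H^s(\mathbb T)$, set $\theta_n=\theta_0\,2^{n}$, and define $\psi_{n+1}=\psi_n+v_n$ with $v_n=-\mathbb L(S_{\theta_n}(\varphi_a+\psi_n))^{-1}g_n$, where $g_n$ is the smoothed accumulated residual. The standard bookkeeping --- splitting the quadratic error into a Taylor remainder plus a substitution error, both controlled by tame estimates for $F$ and $F''$ (which hold because the nonlinearity is polynomial in $\varphi$, $\phi$ and their derivatives modulo the order-zero operator $[\mathbb H;\phi]\partial_x$, so Moser-type product and commutator inequalities apply) --- gives geometric convergence of $\psi_n$ in a low norm and uniform control of a high norm, hence a solution $\psi$ of $\Phi(\psi)=0$. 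At each step one checks $\mu-2\mathbb H[S_{\theta_n}(\varphi_a+\psi_n)]_x\ge\delta/2$, which holds on an interval $[0,T]$ whose length depends only on $\|\varphi^{(0)}\|_{H^{11}}$, $\|\varphi^{(1)}\|_{H^{10}}$, $\mu$, $\delta$, as claimed; tracking indices through the loss $\ell$ places $\varphi$ in the spaces of Statement (1). Statement (2) follows by the same scheme initialized at regularity $H^{\nu+1}\times H^{\nu}$, or by a propagation-of-regularity argument on the solution already constructed.

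The main obstacle is the second step: proving the tame linear estimate with a \emph{fixed}, non-accumulating loss. The linearization of $\mathcal Q$ contributes genuinely second-order nonlocal terms such as $[\mathbb H;\bar\phi]\,\mathbb H[v_{xxx}]$, which compete with the principal term $(\mu-2\bar\phi_x)v_{xx}$; without the cancellation that renders $[\mathbb H;\bar\phi]\partial_x$ of order zero, the top-order energy would not close. The delicate point is to combine this cancellation with the sign condition $\mu-2\bar\phi_x\ge\delta/2$ so that the principal energy is coercive, while confining the dependence on $\bar\varphi$ to a bounded number of extra derivatives. Pinning this loss down to exactly two is what fixes the regularity indices ($H^{11}\!\to\!H^{9}$, $H^{\nu+1}\!\to\!H^{\nu-1}$) in the statement.
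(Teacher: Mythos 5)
Your proposal matches the paper's route for this statement: the paper does not reprove Theorem \ref{nonlin_th} but cites \cite{M-S-T:ONDE1}, whose proof is, exactly as you outline, a careful analysis of the linearized problem (kept second order by the cancellation making $[\mathbb H;\phi]\partial_x$ order zero, and coercive under the sign condition $\mu-2\phi_x\ge\delta/2$) combined with a Nash--Moser iteration, with the fixed loss of derivatives in the tame linear estimate accounting for the two-derivative gap $H^{11}\times H^{10}\to H^9$ noted in Remark \ref{differenze_th}. Your sketch is consistent with that scheme, including the dependence of $T$ only on $\Vert\varphi^{(0)}\Vert_{H^{11}}$, $\Vert\varphi^{(1)}\Vert_{H^{10}}$, $\mu$, $\delta$, and the higher-regularity Statement (2).
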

\begin{remark}\label{differenze_th}
Let us point out that there are significant differences between the well posedness results provided in Theorem \ref{th_esistenza}  and Theorem \ref{nonlin_th} above. Theorem \ref{th_esistenza} decreases  the regularity  required to the solution of the problem \eqref{onde_integro_diff}, \eqref{id},  leading to  space $H^3(\mathbb T)$. Here, we have ``optimal regularity'' of the data, in the sense that there is no loss of  regularity of the solution with respect to the initial data. On the contrary, in the result given by Theorem \ref{nonlin_th} there is a loss of regularity from the initial data to the solution, which comes from the use of  Nash-Moser's iteration method in the proof (see  \cite{M-S-T:ONDE1} for details).
\end{remark}


\subsection{Existence of the solution. Proof of Statement (1).}\label{sec_existence}
Let us prove the statement (1) of Theorem \ref{th_esistenza}. In order to prove the existence of the solution in $C([0,T_0);H^3(\mathbb T))\cap C^1([0,T_0);H^2(\mathbb T)) $, for a suitable $T_0>0$  we will go on with the following scheme:
\begin{itemize}
\item[(1)] we approximate the initial data $(\varphi^{(0)}, \varphi^{(1)})\in H^3(\mathbb T)\times H^2(\mathbb T)$  by sequences of sufficiently smooth data $(\varphi_m^{(0)}, \varphi_m^{(1)})$  so that the local existence result of Theorem \ref{nonlin_th} applies, giving a sequence of solutions $\varphi_m$ defined on suitable intervals $[0,T_m]$.
 \item[(2)] We show that, for each $m$, the solution $\varphi_m$ can be prolonged in time on $[0,T]$, for every $T<T_0$ independent of $m$. Moreover, on $[0,T]$ the solution $\varphi_m$ satisfies an $m$-uniform estimate in $H^3$.
 \item[(3)] Up to a subsequence we pass to the weak-limit in the sequence $\{\varphi_m\}$ and show that such limit $\varphi$ is the solution corresponding to the original data $(\varphi^{(0)}, \varphi^{(1)})\in H^3(\mathbb T)\times H^2(\mathbb T)$ on $[0,T]$. Moreover the weak limit $\varphi$  is strongly continuous on $[0,T]$ as an $H^3-$valued function.
 \item[(4)] For the arbitrariness of $T<T_0$, we get that  $\varphi \in C([0,T_0);H^3(\mathbb T))\cap C^1([0,T_0);H^2(\mathbb T)) $.
 \end{itemize}
 In order to prove Theorem \ref{th_esistenza} we need applying the result of the following technical lemma.
 \begin{lemma}\label{lemma_R0}
 For $0<\delta<\mu$ given there exists $0<R_0\le 1$ such that for every $\psi=\psi(x)\in H^2(\mathbb T)$ satisfying
 \begin{equation}\label{ip_lemma_R0}
 \Vert\psi_x\Vert^2_{H^1}\le\frac{4(1+\mu+C)}{\min\left\{\frac{\delta}{2},1\right\}}R_0^2
 \end{equation}
 then
 \begin{equation}\label{sign-cond-lemma}
 \mu-2\mathbb H[\psi]_x\ge\delta\,,\quad\mbox{on}\,\,\mathbb T\,.
 \end{equation}
 \end{lemma}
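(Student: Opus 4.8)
The plan is to reduce the pointwise sign condition \eqref{sign-cond-lemma} to an $L^\infty$ bound on $\mathbb H[\psi]_x$, and then to dominate that bound by the $H^1$ norm of $\psi_x$ via Sobolev embedding together with the $H^1$-continuity of the Hilbert transform.

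First I would use that $\mathbb H$ commutes with the $x$-derivative (property 1 in Section \ref{trasf_hilbert}), so that $\mathbb H[\psi]_x=\mathbb H[\psi_x]$. The Sobolev embedding $H^1(\mathbb T)\hookrightarrow L^\infty(\mathbb T)$ follows from \eqref{imm_sobolev} with $\tau=1$, since $|f(x)|\le\sum_{k\in\mathbb Z}|\widehat f(k)|=\Vert\{\widehat f(k)\}\Vert_{\ell^1}\le c_1\Vert f\Vert_{H^1(\mathbb T)}$, where $c_1:=c_\tau$ for $\tau=1$. Combining this with the continuity estimate \eqref{stima_hilbert} of $\mathbb H$ on $H^1$ gives
\[
\Vert\mathbb H[\psi]_x\Vert_{L^\infty(\mathbb T)}=\Vert\mathbb H[\psi_x]\Vert_{L^\infty(\mathbb T)}\le c_1\Vert\mathbb H[\psi_x]\Vert_{H^1(\mathbb T)}\le c_1\Vert\psi_x\Vert_{H^1(\mathbb T)}.
\]

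Next, since $-\mathbb H[\psi]_x\ge-\Vert\mathbb H[\psi]_x\Vert_{L^\infty(\mathbb T)}$ pointwise on $\mathbb T$, it suffices to arrange $2\Vert\mathbb H[\psi]_x\Vert_{L^\infty(\mathbb T)}\le\mu-\delta$, for then $\mu-2\mathbb H[\psi]_x\ge\mu-(\mu-\delta)=\delta$, which is exactly \eqref{sign-cond-lemma}. By the displayed estimate this is guaranteed once $c_1\Vert\psi_x\Vert_{H^1(\mathbb T)}\le(\mu-\delta)/2$. It remains to choose $R_0$ so that hypothesis \eqref{ip_lemma_R0} forces this smallness: from \eqref{ip_lemma_R0} one has $\Vert\psi_x\Vert_{H^1(\mathbb T)}\le\frac{2\sqrt{1+\mu+C}}{\sqrt{\min\{\delta/2,1\}}}\,R_0$, so I would set
\[
R_0:=\min\left\{1,\ \frac{(\mu-\delta)\sqrt{\min\{\delta/2,1\}}}{4c_1\sqrt{1+\mu+C}}\right\},
\]
which is positive, at most $1$, and depends only on $\mu,\delta$ (and the fixed numerical constants $c_1,C$). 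With this choice \eqref{ip_lemma_R0} implies $c_1\Vert\psi_x\Vert_{H^1(\mathbb T)}\le(\mu-\delta)/2$, hence \eqref{sign-cond-lemma}.

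I do not expect any genuine obstacle here: the argument is essentially bookkeeping of constants. The only point requiring care is to track the Sobolev embedding constant $c_1$ and to verify that the somewhat peculiar factor $4(1+\mu+C)/\min\{\delta/2,1\}$ in \eqref{ip_lemma_R0} is compatible with the chosen $R_0$; this factor is not arbitrary but is calibrated to match the energy bound \eqref{stima_H2}, so that the same $R_0$ can later be fed into the existence argument through the a priori estimate of Theorem \ref{mainHs}.
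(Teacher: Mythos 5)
Your proposal is correct and follows essentially the same route as the paper's own proof: reduce the pointwise condition \eqref{sign-cond-lemma} to the $L^\infty$ bound $\Vert\mathbb H[\psi]_x\Vert_{L^\infty}\le(\mu-\delta)/2$, dominate it via the Sobolev imbedding $H^1(\mathbb T)\hookrightarrow L^\infty(\mathbb T)$ together with the $H^1$-continuity of $\mathbb H$, and then calibrate $R_0$ against the factor $4(1+\mu+C)/\min\{\delta/2,1\}$ in \eqref{ip_lemma_R0}. Your explicit choice of $R_0$ is just the solved-out form of the paper's implicit condition $\frac{4(1+\mu+C)}{\min\{\delta/2,1\}}R_0^2\le\left(\frac{\mu-\delta}{2C}\right)^2$, so the two arguments coincide.
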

\begin{proof}
We first observe that condition \eqref{sign-cond-lemma} is verified if
\begin{equation}\label{norma_Linfinito}
\Vert\mathbb H[\psi]_x\Vert_{L^\infty}\le\frac{\mu-\delta}{2}\,.
\end{equation}
On the other hand, from Sobolev Imbedding Theorem and the Sobolev continuity of $\mathbb H$ we have
\begin{equation*}
\Vert\mathbb H[\psi]_x\Vert_{L^\infty}\le C\Vert H[\psi]_x\Vert_{H^1}\le C\Vert\psi_x\Vert_{H^1}\,.
\end{equation*}
Then \eqref{norma_Linfinito} follows if
\begin{equation*}
\Vert\psi_x\Vert_{H^1}\le\frac{\mu-\delta}{2C}\,.
\end{equation*}
Thus the result follows by choosing $0<R_0\le 1$ such that
\begin{equation*}
\frac{4(1+\mu+C)}{\min\left\{\frac{\delta}{2},1\right\}}R_0^2\le \left(\frac{\mu-\delta}{2C}\right)^2\,.
\end{equation*}
\end{proof}

\vspace{.5cm}

{\it Proof of Theorem \ref{th_esistenza}, Statement (1).}
We proceed to prove the above steps.
For fixed $0<\delta<\mu$, let $0<R_0\le 1$ be defined as in Lemma \ref{lemma_R0} and let $(\varphi^{(0)}, \varphi^{(1)})\in H^3(\mathbb T)\times H^2(\mathbb T)$ satisfy the assumption \eqref{ip_dati_iniziali} with the previously given $R_0>0$. Since we have $\frac{4(1+\mu+C)}{\min\left\{\frac{\delta}{2},1\right\}}\ge 1$, from \eqref{ip_dati_iniziali} and Lemma \ref{lemma_R0} it follows that
\begin{equation*}
\mu-2\mathbb H[\varphi^{(0)}]_x\ge\delta\,,\quad\mbox{in}\,\,\mathbb T\,.
\end{equation*}

Our final goal is to prove the existence of a solution to the problem \eqref{onde_integro_diff}, \eqref{id} on the time interval $[0,T_0)$, with
\begin{equation*}
T_0:=\frac1{2C_\delta(1+\mu+C)^{1/2}\{\Vert\varphi^{(0)}_x\Vert^2_{H^2(\mathbb T)}+\Vert\varphi^{(1)}\Vert^2_{H^2(\mathbb T)}\}^{1/2}}\,,
\end{equation*}
which is of the form \eqref{T0_nuova} with $C_1^{-1}=2C_\delta(1+\mu+C)^{1/2}$.

Let ${R}>0$ be arbitrarily chosen such that
\begin{equation}\label{Rtilde}
\Vert\varphi^{(0)}_x\Vert^2_{H^2(\mathbb T)}+\Vert\varphi^{(1)}\Vert^2_{H^2(\mathbb T)}< R^2<R_0^2\,.
\end{equation}
Let $ T$ be defined by
\begin{equation}\label{T0_bis}
 T:=\frac1{2C_\delta\rho}\,,
\end{equation}
with $\rho$ and $C_\delta$ given by \eqref{rho0} and \eqref{Cdelta}, respectively. Note that $T\nearrow T_0$ as $R\searrow \{\Vert\varphi^{(0)}_x\Vert^2_{H^2(\mathbb T)}+\Vert\varphi^{(1)}\Vert^2_{H^2(\mathbb T)}\}^{1/2}$.

Later on we use the following notation
  \begin{equation}\label{Hinfty}
  H^\infty(\mathbb T):= \bigcap\limits_{m=0}^{+\infty}H^m(\mathbb T).
  \end{equation}
 {\it Step (1): Approximation of the initial data.} By density there exist two sequences $\varphi^{(0)}_m,\, \varphi^{(1)}_m\in H^{\infty}(\mathbb T)$, with zero spatial mean, such that
 \begin{equation}\label{convergenze}
 \begin{split}
 &\varphi^{(0)}_m \rightarrow \varphi^{(0)}\,, \quad \mbox{in}\,\, H^3(\mathbb T)\,,\\
 & \varphi^{(1)}_m \rightarrow \varphi^{(1)}\,\quad \mbox{in}\,\, H^2(\mathbb T)\,;
 \end{split}
 \end{equation}
 without loss of generality, we may even assume that
 \begin{equation}\label{ipotesi_dati_appr}
 \Vert\varphi^{(0)}_{m,x}\Vert^2_{H^2}+\Vert\varphi^{(1)}_{m}\Vert^2_{H^2}<R^2\,,\quad\mbox{for}\,\, m=1,2,\dots\,,
 \end{equation}
 that implies (again by Lemma \ref{lemma_R0} which is still true for $ R< R_0$)
 \begin{equation}\label{sign_cond_m_id}
 \mu-2\mathbb H[\varphi_{m}^{(0)}]_x\ge\delta\,,\quad\mbox{in}\,\,\mathbb T,\quad \mbox{for} \,\, m=1,2,\dots.
 \end{equation}
 For each $m$, let us consider the initial value problem for the equation \eqref{onde_integro_diff}, with initial data $\left(\varphi^{(0)}_m, \varphi^{(1)}_m\right)$, i.e.
 \begin{equation}\label{Pm}
 \begin{cases}
 \mbox{Eqt. \eqref{onde_integro_diff}}\\
 \varphi(0)=\varphi^{(0)}_m\,,\\
 \varphi_t(0)=\varphi^{(1)}_m\,.
 \end{cases}
 \end{equation}
 In particular $\varphi^{(0)}_m\in H^{11}(\mathbb T)$, $\varphi^{(1)}_m\in H^{10}(\mathbb T)$ and \eqref{sign-cond-id} holds true (cf. \eqref{sign_cond_m_id}); hence, in view of Theorem \ref{nonlin_th}, problem \eqref{Pm} admits a unique solution $\varphi_m$ on a time interval $[0,T_m]$, where $T_m=T(\mu,\delta, R_m)>0$ depends on $\mu$, $\delta$ and $R_m$ such that
 \begin{equation}\label{Rm}
 \Vert\varphi^{(0)}_m \Vert^2_{H^{11}(\mathbb T)} + \Vert\varphi^{(1)}_m \Vert^2_{H^{10}(\mathbb T)}\leq R^2_m
 \end{equation}
 (namely $R_m$ is the radius of a ball centered in the origin in the space  $H^{11}(\mathbb T)\times H^{10}(\mathbb T)$), and
 \begin{equation}\label{sign-cond-m}
 \mu-2\mathbb H[\varphi_{m}]_x\ge\frac{\delta}{2}\,,\quad\mbox{in}\,\,[0,T_m]\times\mathbb T\,.
 \end{equation}
 Because of the $H^\infty$-smoothness of the initial data $\left(\varphi^{(0)}_m, \varphi^{(1)}_m\right)$, we get that $\left(\varphi^{(0)}_m, \varphi^{(1)}_m\right)\in H^{\nu+1}(\mathbb T)\times H^\nu(\mathbb T)$ for all $\nu$, hence from Theorem \ref{nonlin_th} Statement $(2)$, the solution $\varphi_m$ satisfies
\begin{equation}\label{reg_sol}
\varphi_m\in H^2(0,T_m;H^\infty(\mathbb T))\,.
\end{equation}
Notice also that in view of Sobolev Imbedding Theorem
\begin{equation}\label{reg_sol1}
\varphi_m\in C([0,T_m];H^{12}(\mathbb T))\cap C^1([0,T_m]; H^{11}(\mathbb T))
\end{equation}
and from Remark \ref{rmk:Hs1}
\begin{equation}\label{reg_sol2}
\varphi_m\in C^2([0,T_m];H^{10}(\mathbb T))\,.
\end{equation}
Since the sequences $\left(\varphi^{(0)}_m, \varphi^{(1)}_m\right)$ can not be uniformly bounded in $H^{11}(\mathbb T)\times H^{10}(\mathbb T)$ with respect to m (that is $R_m \nearrow +\infty$, as $m\rightarrow +\infty$) and  $T_m$ depends decreasingly on $R_m$, in principle it could happen that $T_m\searrow 0$, as $m\rightarrow +\infty$.
Our next goal, is to prove that each solution $\varphi_m$ can be actually extended up to the fixed time $T$ defined by \eqref{T0_bis} (independent of $m$).

For $m$ arbitrarily fixed, let us assume that
\begin{equation}\label{ip_Tm}
1-C_\delta T_m\rho\ge\frac12\,.
\end{equation}
If the contrary were true, then we would infer that $T_m>\frac1{2C_\delta\rho}$, which should mean that for the considered index $m$ the solution $\varphi_m$ already exists up to the time $T$ (which is our goal).

If \eqref{ip_Tm} is true, then we are in the position to apply to $\varphi_m$ on $[0,T_m]$ the result of Theorem \ref{mainHs} statement (1). Indeed all the assumptions \eqref{ip:1} (with $T_m$ instead of $T$, cf. \eqref{ip_Tm}), \eqref{regH2} (cf. \eqref{reg_sol1}, \eqref{reg_sol2}), \eqref{sign-cond-sol} (cf. \eqref{sign-cond-m}) and \eqref{ip:3} (cf. \eqref{ipotesi_dati_appr}) are satisfied. Then from Theorem \ref{mainHs} we find that $\varphi_m$ fulfils
\begin{equation*}\label{stima_H2_m}
\Vert\varphi_{m,x}(t)\Vert_{H^2}^2+\Vert\varphi_{m,t}(t)\Vert_{H^2}^2\le\frac{4\rho^2}{\min\left\{1,\frac{\delta}{2}\right\}}\,,\quad\forall\,t\in[0,T_m]\,.
\end{equation*}
In particular this implies (recalling the definition of $\rho$ (cf. \eqref{rho0})
\begin{equation*}\label{stima_H1_m}
\Vert\varphi_{m,x}(t)\Vert_{H^1}^2\le\frac{4(1+\mu+C) R^2}{\min\left\{1,\frac{\delta}{2}\right\}}\,,\quad\forall\,t\in[0,T_m]\,,
\end{equation*}
hence, by Lemma \ref{lemma_R0},
\begin{equation}\label{sign-cond-delta-m}
\mu-2\mathbb H[\varphi_m]_x\ge\delta\,,\quad\mbox{in}\,\,[0,T_m]\times\mathbb T\,.
\end{equation}
Under the same assumptions \eqref{ip:1}--\eqref{ip:3}, Theorem \ref{mainHs} statement (2) yields that $\varphi_m$ fulfils estimate \eqref{stima_Hs} with $r=10$, that is
\begin{equation}\label{stima_Hr_10}
    \Vert\varphi_{m,x}(t)\Vert^2_{H^{10}}+\Vert\varphi_{m,t}(t)\Vert^2_{H^{10}}\le\frac{1+\mu+C}{\min\left\{1,\frac{\delta}{2}\right\}}e^{\widetilde C_{10}}\left\{\Vert\varphi_{m,x}^{(0)}\Vert^2_{H^{10}}+\Vert\varphi_m^{(1)}\Vert^2_{H^{10}}\right\}\le\mathcal C_{\mu,\delta}^2 R_m^2\,,\quad\forall\,t\in[0,T_m]\,,
    \end{equation}
where $R_m$ is defined in \eqref{Rm} and
\begin{equation}\label{Cmudelta}
\mathcal C_{\mu,\delta}^2:=\frac{1+\mu+C}{\min\left\{1,\frac{\delta}{2}\right\}}e^{\widetilde C_{10}}>1\,,
\end{equation}
is independent of $m$.

In view of \eqref{ip_Tm}, $T_m$ is supposed to be less than $T$. So we need to continue the solution beyond $T_m$.
Our next goal is to prove that, in fact, for each $m$ the solution $\varphi_m$ can be extended up to $T$.

{\it Step (2): Time extension of the solution.} In order to perform such an extension, we first extend the solution $\varphi_m$ beyond $T_m$; to do so, we consider the problem
\begin{equation}\label{P'm}
 \begin{cases}
 \mbox{Eqt. \eqref{onde_integro_diff}}\\
 \varphi(T_m)=\varphi_m(T_m)\,,\\
 \varphi_t(T_m)=\varphi_{m, t}(T_m)
 \end{cases}
 \end{equation}
to which we would like to apply Theorem \ref{nonlin_th}.

To begin with we need to check that the values $\varphi_m(T_m)$, $\varphi_{m, t}(T_m)$ involved in the initial conditions in \eqref{P'm} are well-defined respectively in $H^{11}(\mathbb T)$ and $H^{10}(\mathbb T)$.
To do so it is sufficient to observe that from \eqref{reg_sol1} we derive
\begin{equation}\label{reg_sol3}
\varphi_m(T_m)\in H^{12}(\mathbb T)\hookrightarrow H^{11}(\mathbb T)\,,\qquad \varphi_{m, t}(T_m)\in H^{11}(\mathbb T)\hookrightarrow H^{10}(\mathbb T)\,.
\end{equation}
From \eqref{sign-cond-delta-m} computed at $t= T_m$ we also deduce that the initial data $\varphi_m(T_m)$ satisfy the sign condition
 \begin{equation}\label{sign_cond_m_nuova}
 \mu-2\mathbb H[\varphi_{m}(T_m)]_x\geq\delta\,, \quad \mbox{on}\,\,\mathbb T\,.
\end{equation}
In the statement of Theorem \ref{nonlin_th} the time existence depends on the initial data through the radius of a ball centered at the origin in the space of the initial data $H^{11}(\mathbb T)\times H^{10}(\mathbb T)$. This radius is determined from the a priori estimate \eqref{stima_Hr_10} at $t=T_m$ which gives
\begin{equation}\label{stima_Tm}
\begin{split}
\Vert\varphi_{m,x}(T_m)\Vert^2_{H^{10}(\mathbb T)}+ \Vert\varphi_{m,t}(T_m)\Vert^2_{H^{10}(\mathbb T)}
&\leq \mathcal C^2_{\mu,\delta} R_m^2\,,
\end{split}
\end{equation}
where $R_m$ is defined in \eqref{Rm} and $\mathcal C_{\mu,\delta}$, defined in \eqref{Cmudelta}, is independent of $m$.
We have found that all the assumptions in Theorem \ref{nonlin_th} are verified by the initial data of problem \eqref{P'm}. Hence applying Theorem \ref{nonlin_th}, we find that \eqref{P'm} admits a solution $\widetilde{\varphi}_m\in H^2(T_m, T_m+T^\prime_m; H^\infty(\mathbb T))$, defined on a time interval $[T_m, T_m+T^\prime_m]$ where $T^\prime_m=T^\prime_m(\mu, \delta, \mathcal C_{\mu,\delta}R_m)$, such that
\begin{equation*}
\mu-2\mathbb H[\widetilde{\varphi}_m]_x\ge\frac{\delta}{2}\,,\quad\mbox{in}\,\,[T_m,T_m+T^\prime_m]\times\mathbb T\,.
\end{equation*}
By construction the function $\widetilde{\varphi}_m$ provides an extension of the solution $\varphi_m$ of problem \eqref{Pm} to the interval $[0, T_m+T^\prime_m]$; let us continue to denote by $\varphi_m$ this extension, so that now
\begin{equation}\label{reg_sol4}
\varphi_m\in H^2(0,T_m+T^\prime_m; H^\infty(\mathbb T))
\end{equation}
and
\begin{equation*}
\mu-2\mathbb H[\varphi_m]_x\ge\frac{\delta}{2}\,,\quad\mbox{in}\,\,[0,T_m+T^\prime_m]\times\mathbb T\,.
\end{equation*}

Let us assume now that
\begin{equation*}
1-C_\delta(T_m+T^{\prime}_m)\rho\ge\frac12
\end{equation*}
(if this would not be the case, as before we should have $T_m+T^{\prime}_m> T$ then $\varphi_m$ would be defined up to $T$ and we would be done).

Since as before the regularity in \eqref{reg_sol4} implies
\begin{equation*}
\varphi_m\in C([0,T_m+T^{\prime}_m]; H^{12}(\mathbb T))\cap C^1([0,T_m+T^{\prime}_m]; H^{11}(\mathbb T))\cap C^2([0,T_m+T^{\prime}_m]; H^{10}(\mathbb T))
\end{equation*}
and recalling that $\varphi^{(0)}_m$, $\varphi^{(1)}_m$ satisfy \eqref{ipotesi_dati_appr}, we find again that the solution $\varphi_m$ of problem \eqref{Pm} verifies all the assumptions \eqref{ip:1}--\eqref{ip:3} of Theorem \ref{mainHs} on the time interval $[0,T_m+T^{\prime}_m]$. Thus we obtain that
\begin{equation*}
\Vert\varphi_{m,x}(t)\Vert_{H^2}^2+\Vert\varphi_{m,t}(t)\Vert_{H^2}^2\le\frac{4\rho^2}{\min\left\{1,\frac{\delta}{2}\right\}}\,,\quad\forall\,t\in[0,T_m+T^{\prime}_m]\,.
\end{equation*}
In particular this implies (recalling the definition of $\rho$ in \eqref{rho0})
\begin{equation*}
\Vert\varphi_{m,x}(t)\Vert_{H^1}^2\le\frac{4(1+\mu+C)R^2}{\min\left\{1,\frac{\delta}{2}\right\}}\,,\quad\forall\,t\in[0,T_m+T^{\prime}_m]\,,
\end{equation*}
hence, by Lemma \ref{lemma_R0},
\begin{equation*}
\mu-2\mathbb H[\varphi_m]_x\ge\delta\,,\quad\mbox{in}\,\,[0,T_m+T^{\prime}_m]\times\mathbb T\,.
\end{equation*}
From the additional regularity of $\varphi_m$ again by Theorem \ref{mainHs} we also find the estimate
\begin{equation}\label{stima_Hr_10_nuova}
    \Vert\varphi_{m,x}(t)\Vert^2_{H^{10}}+\Vert\varphi_{m,t}(t)\Vert^2_{H^{10}}\le\frac{1+\mu+C}{\min\left\{1,\frac{\delta}{2}\right\}}e^{\widetilde C_{10}}\left\{\Vert\varphi_{m,x}^{(0)}\Vert^2_{H^{10}}+\Vert\varphi_m^{(1)}\Vert^2_{H^{10}}\right\}\le\mathcal C_{\mu,\delta}^2 R_m^2\,,\quad\forall\,t\in[0,T_m+T^{\prime}_m]\,,
\end{equation}
where $R_m$ is defined in \eqref{Rm} and $\mathcal C_{\mu,\delta}$ in \eqref{Cmudelta}.

If $T_m+T^\prime_m<T$, we would like to extend again the solution $\varphi_m$ beyond $T_m+T^\prime_m$. Hence we consider again the ``updated'' problem
\begin{equation}\label{P''m}
\begin{cases}
\mbox{Eqt. \eqref{onde_integro_diff}}\\
\varphi(T_m+T^\prime_m)=\varphi_m(T_m+T^\prime_m)\,,\\
\varphi_t(T_m+T^\prime_m)=\varphi_{m, t}(T_m+T^\prime_m)\,,
\end{cases}
\end{equation}
where we observe that the new initial data $\varphi_m(T_m+T^\prime_m)$, $\varphi_{m, t}(T_m+T^\prime_m)$ satisfy the same assumptions \eqref{reg_sol3}, \eqref{sign_cond_m_nuova} as the older ones $\varphi_m(T_m)$, $\varphi_{m, t}(T_m)$.

So we may apply once again Theorem \ref{nonlin_th} to derive that problem \eqref{P''m} admits a solution defined on a time interval $[T_m+T^\prime_m, T_m+T^\prime_m+T^{\prime\prime}_m]$, where again $T^{\prime\prime}_m$ depends on $\mu$, $\delta$ and the radius of a ball centered at the origin in the space $H^{11}(\mathbb T)\times H^{10}(\mathbb T)$ containing the initial data $(\varphi_m(T_m+T^\prime_m), \varphi_{m,t}(T_m+T^\prime_m)$. Such a solution can be used to further extend the solution $\varphi_m$ of problem \eqref{Pm} (see \eqref{reg_sol4}) to a function (again denoted $\varphi_m$) defined on $[0, T_m+T^\prime_m+T^{\prime\prime}_m]$.

The key point is that the radius of the ball in $H^{11}(\mathbb T)\times H^{10}(\mathbb T)$ containing the initial data $(\varphi_m(T_m+T^\prime_m),\varphi_{m, t}(T_m+T^\prime_m))$ can be chosen to be exactly the same as for the older data $(\varphi_m(T_m), \varphi_{m, t}(T_m))$, which was $\mathcal C_{\mu,\delta} R_m$ (see \eqref{stima_Tm}), this implies that $T^\prime_m=T^{\prime\prime}_m$. This claim comes from \eqref{stima_Hr_10_nuova} at $t=T_m+T^\prime_m$.
Thus the solution $\varphi_m$ to problem \eqref{Pm} exists up to $T_m+2T^\prime_m$. Now, it is clear that the last argument above can be iterated several times; at each new iteration, the older solution $\varphi_m$ is prolonged on a time interval of the same amplitude as the previous one. Then after a finite number of steps we cover the interval $[0, T]$. To be more precise, for each $m$ the number of needed steps to reach the time $T$ is computed by
\begin{equation*}
k_m:=2+\left[\frac{T-T_m}{T^\prime_m}\right],
\end{equation*}
where the square brackets denote the integer part.

\begin{remark}
Let us observe that, for each $m$ fixed, in order to perform the iterative procedure based on the application of theorems \ref{nonlin_th} and \ref{mainHs}, which yields that the solution $\varphi_m$ is extendable up to $T$, we need only to require a finite regularity for the approximating initial data $\varphi^{(0)}_m, \varphi^{(1)}_m$. Notice that this finite regularity must be computed by taking into account  that, at each step of the iterative procedure described before,  a finite loss of regularity from the updated data appears, and one needs to conclude the procedure with initial data in $H^{11}(\mathbb T)\times H^{10}(\mathbb T)$.

Notice that however, for $m\rightarrow +\infty$  we can have $k_m\rightarrow +\infty$. Therefore, in order to deal with the case of $m$ arbitrarily large,  the $H^\infty$-regularity is required.
\end{remark}

To conclude, we have proved that for each integer $m\ge 1$ the solution $\varphi_m$ of problem \eqref{Pm} exists at least up to the time $T$. By construction, for every $m$ the solution $\varphi_m\in H^2(0,T; H^\infty(\mathbb T))$ enjoys the following additional regularity
\begin{equation}\label{reg_phim}
\varphi_m\in C([0,T]; H^{r+2}(\mathbb T))\cap C^1([0, T]; H^{r+1}(\mathbb T))\cap C^2([0, T]; H^r(\mathbb T))\,,\quad\forall\,r\ge 2
\end{equation}
and satisfies the assumptions \eqref{sign-cond-sol} and \eqref{ip:3} on $[0, T]$. Hence it fulfils the a priori estimate \eqref{stima_H2}, namely
\begin{equation}\label{stima_2m}
\begin{split}
\Vert\varphi_{m,\,x}(t)\Vert^2_{H^2(\mathbb T)}+ \Vert\varphi_{m,\,t}(t)\Vert^2_{H^2(\mathbb T)}
&\leq \frac{4\rho^2}{\min\left\{1,\frac{\delta}{2}\right\}}\,,\quad\forall\,t\in[0, T]\,,\quad m=1,2,\dots\,,
\end{split}
\end{equation}
from which, in view of Poincar\'e's inequality, we also derive
\begin{equation}\label{stima_3m}
\begin{split}
\Vert\varphi_{m}(t)\Vert^2_{H^3(\mathbb T)}+ \Vert\varphi_{m,\,t}(t)\Vert^2_{H^2(\mathbb T)}
&\leq  M\,,\quad\forall\,t\in[0,  T]\,,\,\,m=1,2,\dots\,,
\end{split}
\end{equation}
where $ M$ is independent of $m$. Moreover, by Lemma \ref{lemma_R0} we have
\begin{equation}\label{131}
\mu-2\mathbb H[\varphi_m]_x\ge\delta\,,\quad\mbox{in}\,\,[0,T]\times\mathbb T\,.
\end{equation}
The next step is the passage to the limit as $m\to +\infty$ in the sequence $\{\varphi_m\}$.


\vspace{.2cm}
\noindent
{\it Step (3): Passage to the limit}. We prove the passage to the limit in several steps.

\vspace{.15cm}
{\it Step (3.1): Weak $\ast$ limit}. Estimate \eqref{stima_3m} tells us that the sequence $\{\varphi_m\}$ is bounded in $L^\infty(0, T; H^3(\mathbb T))\cap W^{1,\infty}(0, T; H^2(\mathbb T))$; hence, we can extract from $\{\varphi_m\}$ a subsequence (still denoted with the same symbol) such that
\begin{equation}\label{conv_star}
\begin{split}
&\varphi_m\rightharpoonup^\ast\varphi\,,\quad\mbox{in}\,\,L^\infty(0, T; H^3(\mathbb T))\,,\\
&\varphi_{m\,, t}\rightharpoonup^\ast\varphi_t\,,\quad\mbox{in}\,\,L^\infty(0, T; H^2(\mathbb T))
\end{split}
\end{equation}
for a suitable $\varphi\in L^\infty(0, T; H^3(\mathbb T))\cap W^{1,\infty}(0, T; H^2(\mathbb T))$.

\vspace{.15cm}
{\it Step (3.2): Strong limit}. We are going to show that, up to subsequences, $\{\varphi_m\}$ strongly converges to $\varphi$ in $C([0, T]; H^{3-\varepsilon}(\mathbb T))$ whenever $0<\varepsilon<1$.

For a fixed $\varepsilon\in]0,1[$ let $p>1$ be chosen such that $0<\varepsilon-\frac1{p}=:\alpha<1$. Since $\{\varphi_m\}$ is bounded in $C([0, T]; H^3(\mathbb T))\cap C^1([0, T]; H^2(\mathbb T))$, it is also bounded in $L^p(0, T; H^3(\mathbb T))\cap W^{1,p}(0, T; H^2(\mathbb T))$ and, by interpolation, in $W^{\varepsilon, p}(0, T; H^{3-\varepsilon}(\mathbb T))$. On the other hand by Sobolev Imbedding Theorem we have
\begin{equation*}
W^{\varepsilon, p}(0, T; H^{3-\varepsilon}(\mathbb T))\hookrightarrow C^\alpha([0, T]; H^{3-\varepsilon}(\mathbb T))\,.
\end{equation*}
Hence we derive that $\{\varphi_m\}$ is bounded in $C([0, T]; H^3(\mathbb T))\cap C^\alpha([0, T]; H^{3-\varepsilon}(\mathbb T))$. Since moreover $H^3(\mathbb T)\hookrightarrow H^{3-\varepsilon}(\mathbb T)$ with compact imbedding, we find that the sequence $\{\varphi_m\}$ is equicontinuous in $C([0, T]; H^{3-\varepsilon}(\mathbb T))$. From Ascoli-Arzel\`a Theorem, we obtain that $\{\varphi_m\}$ is relatively compact in $C([0, T]; H^{3-\varepsilon}(\mathbb T))$, hence from $\{\varphi_m\}$ one can extract a subsequence (still denoted with $\{\varphi_m\}$) strongly convergent in $C([0, T]; H^{3-\varepsilon}(\mathbb T))$. By uniqueness of the limit, the strong limit in $C([0, T]; H^{3-\varepsilon}(\mathbb T))$ is the same function $\varphi$ obtained as the weak $\ast$ limit of $\{\varphi_m\}$ in the previous step (3.1).

\vspace{.15cm}
{\it Step (3.3): Additional regularity of $\varphi$}. We are going to show that $\varphi\in C_w([0, T]; H^3(\mathbb T))$, namely the space of continuous functions on $[0, T]$ with values in $H^3(\mathbb T)$ endowed with the weak topology. To this end, we use the results of Appendix \ref{Lions-Magenes_sect}. In fact, from \eqref{conv_star} we have (using the notation of that section)
\begin{equation*}
\varphi\in W(H^3(\mathbb T),H^2(\mathbb T)):=\left\{u\in L^2(0, T; H^3(\mathbb T))\,:\,\,u_t\in L^2(0, T; H^2(\mathbb T))\right\}
\end{equation*}
(recall that $L^\infty(0, T; X)\hookrightarrow L^2(0, T; X)$ for every Banach space $X$). Hence by Lemma \ref{lemma2} in Appendix \ref{Lions-Magenes_sect} we have
\begin{equation*}
\varphi\in W(H^3(\mathbb T),H^2(\mathbb T))\hookrightarrow C([0, T]; \left[H^3(\mathbb T),H^2(\mathbb T)\right]_{1/2})\equiv C([0, T]; H^{2.5}(\mathbb T))\hookrightarrow C([0, T]; H^2(\mathbb T))\,.
\end{equation*}
Hence from \eqref{conv_star} and the above inclusion we get that
\begin{equation*}
\varphi\in L^\infty(0, T; H^3(\mathbb T))\cap C([0, T]; H^2(\mathbb T))\hookrightarrow C_w([0, T]; H^3(\mathbb T))\,,
\end{equation*}
in view of Lemma \ref{lemma1} of Appendix \ref{Lions-Magenes_sect}. Let us observe that for all $m$ also the function $\varphi_m$ belongs to $C_w([0, T]; H^3(\mathbb T))$ as a straightforward consequence of the regularity in \eqref{reg_phim}.

\vspace{.15cm}
{\it Step (3.4): Weak limit}. We are going to show that $\{\varphi_m\}$ converges to $\varphi$ in $C_w([0, T]; H^{3}(\mathbb T))$, which means that
\begin{equation}\label{conv_debole}
\varphi_m(t)\rightharpoonup\varphi(t)\,,\quad\mbox{in}\,\,H^3(\mathbb T)\,\,\,\mbox{uniformly in}\,\,t\in[0, T]\,.
\end{equation}
To prove the above convergence, we need to show that for all $\phi\in H^{-3}(\mathbb T)$
\begin{equation}\label{conv_debole1}
\langle \varphi_m(t)-\varphi(t)\,,\,\phi\rangle_{H^{3}, H^{-3}}\rightarrow 0\,,\quad\mbox{uniformly in}\,\,t\in[0, T]\,.
\end{equation}

Let $\phi\in H^{-3}(\mathbb T)$ arbitrarily fixed; since $H^{\varepsilon-3}(\mathbb T)$ is dense in $H^{-3}(\mathbb T)$, there exists a sequence $\{\phi_k\}$ in $H^{\varepsilon-3}(\mathbb T)$ such that $\phi_k\to\phi$ in $H^{-3}(\mathbb T)$. For  arbitrary indices $m,k$, we compute
\begin{equation}\label{stima_conv_deb1}
\begin{split}
\vert \langle \varphi_m(t)&-\varphi(t)\,,\,\phi\rangle_{H^{3}, H^{-3}}\vert \le \vert \langle \varphi_m(t)-\varphi(t)\,,\,\phi_k\rangle_{H^{3}, H^{-3}}\vert +\vert\langle \varphi_m(t)-\varphi(t)\,,\,\phi-\phi_k\rangle_{H^{3}, H^{-3}}\vert\\
&\le \Vert \varphi_m(t)-\varphi(t)\Vert_{H^{3-\varepsilon}(\mathbb T)}\Vert\phi_k\Vert_{H^{\varepsilon-3}(\mathbb T)}+(\Vert\varphi_m(t)\Vert_{H^3(\mathbb T)}+\Vert\varphi(t)\Vert_{H^3(\mathbb T)})\Vert\phi-\phi_k\Vert_{H^{-3}(\mathbb T)}\\
&\le\Vert \varphi_m(t)-\varphi(t)\Vert_{H^{3-\varepsilon}(\mathbb T)}\Vert\phi_k\Vert_{H^{\varepsilon-3}(\mathbb T)}+C_1\Vert\phi-\phi_k\Vert_{H^{-3}(\mathbb T)}\,,
\end{split}
\end{equation}
since $\varphi_m, \varphi$ are uniformly bounded in $L^\infty(0, T; H^{3}(\mathbb T))$.

Since $\phi_k\to\phi$ in $H^{-3}(\mathbb T)$,
\begin{equation}\label{conv_H-3}
\forall \eta>0 \,\,\,\,\ \exists\,\, k_0\,\, :\, \forall k\geq k_0\,\,\,\, \Vert\phi-\phi_k\Vert_{H^{-3}(\mathbb T)}\leq \frac{\eta}{2C_1}.
\end{equation}
For arbitrarily fixed $\eta>0$, estimate \eqref{stima_conv_deb1} for $k=k_0$, together with \eqref{conv_H-3}, becomes
\begin{equation}\label{stima_conv_deb2}
\vert \langle \varphi_m(t)-\varphi(t)\,,\,\phi\rangle_{H^{3}, H^{-3}}\vert
\le\Vert \varphi_m(t)-\varphi(t)\Vert_{H^{3-\varepsilon}(\mathbb T)}\Vert\phi_{k_0}\Vert_{H^{\varepsilon-3}(\mathbb T)}+\frac{\eta}{2}\,.
\end{equation}
Since $\varphi_m$  is strongly convergent to $\varphi$ in $C([0, T];H^{3-\varepsilon}(\mathbb T))$, there exists $m_0$ (depending only on $\eta$ and $k_0$ through $\Vert \phi_{k_0}\Vert_{H^{\varepsilon-3}(\mathbb T)}$) such that
\begin{equation}\label{stima_conv_deb3}
\Vert \varphi_m(t)-\varphi(t)\Vert_{H^{3-\varepsilon}(\mathbb T)}\leq \frac{\eta}{2\Vert \phi_{k_0} \Vert_{H^{\varepsilon-3}(\mathbb T)}}\,\, \forall\, m\geq m_0, \,\,\forall\, t\in [0, T].
\end{equation}
Gathering estimates \eqref{stima_conv_deb2} and \eqref{stima_conv_deb3} gives
\begin{equation}\label{stima_conv_deb4}
\vert \langle \varphi_m(t)-\varphi(t)\,,\,\phi\rangle_{H^{3}, H^{-3}}\vert
\le \eta\,\,\,\,\,\quad
\forall t\in[0, T],
\end{equation}
i.e. the convergence in \eqref{conv_debole}.

\vspace{.15cm}
{\it Step (3.5): Additional regularity for $\varphi_t$}. We are going to prove that $\varphi_t\in C_w([0, T]; H^2(\mathbb T))$. In order to apply the same arguments used above for $\varphi$ (cf. Steps (3.3)), in view of the second convergence in \eqref{conv_star} it is sufficient to show that
 \begin{equation}\label{conv-derivate-tt}
 \varphi_{m,tt}\rightharpoonup^* \varphi_{tt} \,\quad  \mbox{in}\,\,\,\, L^\infty(0, T;H^1)\,
 \end{equation}
 and work with $(\varphi_t,\varphi_{tt})$ in $H^2(\mathbb T)\times H^1(\mathbb T)$ instead of $(\varphi, \varphi_t)\in H^3(\mathbb T)\times H^2(\mathbb T)$.

Let us start from $\varphi_m$, solution of \eqref{onde_integro_diff} on $[0, T]$, which gives (see also \eqref{equ1ter})
\begin{equation*}
\varphi_{m,tt}=(\mu-2\phi_{m,x}) \varphi_{m,xx}-\mathcal Q[\varphi_m]\,.
\end{equation*}
Since $\varphi_m\in L^\infty(0, T;H^3(\mathbb T))$, the Sobolev continuity of $\mathbb H$ and Lemma \ref{lemma_prod_alg} imply
\begin{equation*}
(\mu-2\phi_{m,x})\in L^\infty(0, T;H^2(\mathbb T))\,, \quad \varphi_{m,xx}\in L^\infty(0, T;H^1(\mathbb T))\,\,
\Rightarrow (\mu-2\phi_{m,x}) \varphi_{m,xx} \in L^\infty(0, T;H^1(\mathbb T))\,,
\end{equation*}
with norm uniformly bounded with respect to $m$, because of \eqref{stima_3m}.
By using Lemma \ref{lemma_stima_quadr} with $r=1$
\begin{equation*}
\Vert\mathcal Q[\varphi_m] \Vert_{H^1(\mathbb T)}\leq C\Vert\varphi_{m,x}\Vert_{H^2(\mathbb T)} \Vert\varphi_{m,x}\Vert_{H^1(\mathbb T)}\leq C\Vert\varphi_{m,x}\Vert^2_{H^2(\mathbb T)}\leq C\,,\quad \forall t\in[0, T]\,
\end{equation*}
in view of \eqref{stima_3m}, with $C$ independent on $m$. Hence we get that $\{\varphi_{m,tt}\}$ is bounded in $L^\infty(0, T;H^1(\mathbb T))$. Up to a subsequence we find that
\begin{equation}\label{conv_tt}
\varphi_{m,tt}\rightharpoonup^* \,\tilde{\varphi}\quad \mbox{in} \,\, L^\infty(0, T;H^1(\mathbb T))\,.
\end{equation}
It remains to prove that $\tilde{\varphi}=\varphi_{tt}$. This comes from the convergence
\begin{equation}
\varphi_{m}\rightharpoonup^*\varphi \,\quad  \mbox{in}\,\,\,\, L^\infty(0, T;H^3(\mathbb T))\quad \Rightarrow\quad \varphi_{m} \rightarrow \varphi \,\quad  \mbox{in}\,\,\,\,\mathcal {D}^\prime((0, T)\times \mathbb T) \quad \Rightarrow\quad \varphi_{m,tt} \rightarrow \varphi_{tt} \,\quad  \mbox{in}\,\,\,\,\mathcal {D}^\prime((0, T)\times \mathbb T)
\end{equation}
which gives, from the uniqueness of the limit, $\varphi_{tt}=\tilde{\varphi}\in L^\infty(0, T;H^1(\mathbb T))$.

\vspace{.15cm}
{\it Step (3.6): Weak limit of $\{\varphi_{m,t}\}$.} In order to prove that $\varphi_{m, t}\rightarrow \varphi_t$ in $C_w([0, T]; H^2(\mathbb T))$ (up to subsequences), we repeat the same arguments as in steps (3.3) (3.4), with $\varphi_{m,t}$, $\varphi_t$ instead of $\varphi_m$, $\varphi$ (and $H^2(\mathbb T)$ instead of $H^3(\mathbb T)$).

\vspace{.15cm}
{\it Step (3.7): Sign condition for the limit $\varphi$.} From the strong convergence in $C([0,T];H^{3-\epsilon}(\mathbb T))$, the Sobolev continuity of $\mathbb H$,
and the Sobolev imbedding $H^{2-\epsilon}(\mathbb T)\hookrightarrow L^\infty(\mathbb T)$ we find that
\begin{equation}\label{conv_uniforme}
\phi_{m,x}(t,x)\rightarrow\phi_{x}(t,x)\quad\mbox{uniformly in}\,\,[0, T]\times\mathbb T\,.
\end{equation}
Then, passing to the point-wise limit as $m\to +\infty$ in the sign condition \eqref{131} for $\varphi_m$,
we recover the same condition for $\varphi$ that is
\begin{equation}\label{sign_cond_limit}
\mu-2\phi_{x}(t,x)\ge\delta\,,\quad (t,x)\in[0, T]\times\mathbb T\,.
\end{equation}

\vspace{.15cm}
{\it Step (3.8): Strong time continuity of the limit $\varphi$.} We are going to prove that $\varphi\in C([0, T];H^3(\mathbb T))\cap C^1([0, T]; H^2(\mathbb T))$, namely that the $\varphi$ and $\varphi_t$ are in fact {\it strongly continuous in time} as $H^3$ and $H^2-$valued functions respectively. In order to get the strong continuity we follow here a classical argument developed by Majda \cite{majda}. Nevertheless for the reader convenience, we develop the argument explicitly here below. We limit ourselves to prove the strong right-continuity in $t=0$, the continuity in the rest of the interval $[0, T]$ being obtained by the same arguments.
For each $m$ let us set for short
\begin{equation}\label{ym}
\mathcal E(\varphi_m(t)):=\Vert\varphi_{m,t}(t)\Vert^2_{H^2(\mathbb T)}+\int_{\mathbb T}(\mu-2\phi_{m,x}(t))\vert\langle\partial_x\rangle^2\varphi_{m,x}(t)\vert^2dx\,.
\end{equation}
Following the lines of the proof of Theorem \ref{mainHs} we find that
\begin{equation}\label{dis_ym}
\frac{d}{dt}\sqrt{\mathcal E(\varphi_m(t))}\le C_\delta\mathcal E(\varphi_m(t))\,,\qquad\forall\,t\in[0, T]\,,
\end{equation}
see \eqref{dis_y}. Integrating \eqref{dis_ym} over $[0,t]$ for arbitrary $0<t\le  T$ and using \eqref{stima_3m} we obtain
\begin{equation}\label{dis_int_ym}
\sqrt{\mathcal E(\varphi_m(t))}\le \sqrt{\mathcal E(\varphi_m(0))}+ C_\delta\int_0^t \mathcal E(\varphi_m(\tau))\,d\tau\le \sqrt{\mathcal E(\varphi_m(0))}+C^\prime_{\delta}t\,,\qquad\forall\,t\in[0, T]\,,
\end{equation}
where $C^\prime_\delta$ is a suitable positive constant independent of $m$.

Now we proceed with the following steps.

{\it Step $(3.8)_1$}: {\it We pass to the limit as $m\to +\infty$ in the inequality \eqref{dis_int_ym}} in order to find an analogous relation for the weak limit $\varphi(t)$, that is
\begin{equation}\label{dis_int_y}
\sqrt{\mathcal E(\varphi(t))}\le \sqrt{\mathcal E(\varphi(0))}+C^\prime_{\delta}t\,,\qquad\forall\,t\in[0, T]\,.
\end{equation}

In view of Corollary \ref{cor_ioffe}, we know that $\mathcal E(\cdot)$ is lower semi-continuous with respect to the weak convergences in $H^3(\mathbb T)$ and $H^2(\mathbb T)$ in the sense that
\begin{equation}\label{conv_m}
\varphi_m(t)\rightharpoonup\varphi(t)\,\,\,\mbox{w-}H^3(\mathbb T)\quad\mbox{and}\quad\varphi_{m,t}(t)\rightharpoonup\varphi_t(t)\,\,\,\mbox{w-}H^2(\mathbb T)
\end{equation}
yields
\begin{equation*}
\mathcal E(\varphi(t))\le\liminf\limits_{m\to +\infty}\mathcal E(\varphi_m(t))\,.
\end{equation*}
Hence passing to the limit as $m\to +\infty$ in \eqref{dis_int_ym} we get
\begin{equation*}
\sqrt{\mathcal E(\varphi(t))}\le\sqrt{\liminf\limits_{m\to +\infty}\mathcal E(\varphi_m(t))}=\liminf\limits_{m\to +\infty}\sqrt{\mathcal E(\varphi_m(t))}\le\liminf_{m\to +\infty}\sqrt{\mathcal E(\varphi_m(0))}+C^\prime_{\delta}t\,,\qquad\forall\,t\in[0, T]\,,
\end{equation*}
where
\begin{equation*}
\liminf_{m\to +\infty}\sqrt{\mathcal E(\varphi_m(0))}=\lim_{m\to +\infty}\sqrt{\mathcal E(\varphi_m(0))}=\sqrt{\mathcal E(\varphi(0))}\,,
\end{equation*}
since from
\begin{equation*}
\varphi_m(0)=\varphi_m^{(0)}\rightarrow\varphi(0)=\varphi^{(0)}\,\,\,\mbox{s-}H^3(\mathbb T)\quad\mbox{and}\quad\varphi_{m,t}(0)=\varphi_m^{(1)}\rightarrow\varphi_t(0)=\varphi^{(1)}\,\,\,\mbox{s-}H^2(\mathbb T)
\end{equation*}
(see \eqref{convergenze}) it follows that
\begin{equation*}
\mu-2\phi_{m,x}(0)\rightarrow\mu-2\phi_{x}(0)\,\,\,\mbox{in}\,\,L^\infty(\mathbb T)\quad\mbox{and}\quad\vert\langle\partial_x\rangle^2\varphi_{m,x}(0)\vert^2\rightarrow \vert\langle\partial_x\rangle^2\varphi_{x}(0)\vert^2\,\,\,\mbox{s-}L^1(\mathbb T)\,.
\end{equation*}
This proves the inequality \eqref{dis_int_y}.

{\it Step $(3.8)_2$:} {\it We pass to the limit as $t\to 0^+$ in \eqref{dis_int_y}}. Directly from the latter inequality we find
\begin{equation}\label{limsup_y}
\limsup\limits_{t\to 0^+}\sqrt{\mathcal E(\varphi(t))}\le \sqrt{\mathcal E(\varphi(0))}\,.
\end{equation}
Because $\varphi\in C_w([0, T];H^{3}(\mathbb T))$ and $\varphi_t\in C_w([0, T];H^{2}(\mathbb T))$ we have that
\begin{equation*}
\varphi(t)\rightharpoonup\varphi(0)\,\,\,\mbox{w-}H^3(\mathbb T)\quad\mbox{and}\quad\varphi_{t}(t)\rightharpoonup\varphi_t(0)\,\,\,\mbox{w-}H^2(\mathbb T)\,.
\end{equation*}
Hence, from the lower semi-continuity of $\mathcal E(\cdot)$ proved in Corollary \ref{cor_ioffe} we find
\begin{equation*}
\mathcal E(\varphi(0))\le\liminf\limits_{t\to 0^+}\mathcal E(\varphi(t))\,,
\end{equation*}
from which
\begin{equation}\label{liminf_y}
\sqrt{\mathcal E(\varphi(0))}\le\sqrt{\liminf\limits_{t\to 0^+}\mathcal E(\varphi(t))}=\liminf\limits_{t\to 0^+}\sqrt{\mathcal E(\varphi(t))}\,.
\end{equation}
From \eqref{limsup_y} and \eqref{liminf_y} we immediately find
\begin{equation*}
\sqrt{\mathcal E(\varphi(0))}=\lim\limits_{t\to 0^+}\sqrt{\mathcal E(\varphi(t))}\,.
\end{equation*}
hence
\begin{equation}\label{lim_y}
\mathcal E(\varphi(0))=\lim\limits_{t\to 0^+}\mathcal E(\varphi(t))\,.
\end{equation}

{\it Step $(3.8)_3$: We derive the strong convergences $\varphi_t(t) \rightarrow \varphi_t(0)$ and $\varphi_x(t) \rightarrow \varphi_x(0)$ in $H^2(\mathbb T)$}. This comes from the the weak convergences  $\varphi(t) \rightharpoonup \varphi(0)$ $w$-$H^3(\mathbb T)$, $\varphi_t(t) \rightharpoonup \varphi_t(0)$  $w$-$H^2(\mathbb T)$ and the energy convergence \eqref{lim_y}, see Section \ref{Ioffe}. By using the result of Section \ref{Ioffe} we find that
$$\varphi_t(t) \rightarrow \varphi_t(0) \quad s-H^2(\mathbb T)\qquad\mbox{and}\qquad \varphi_x(t) \rightarrow \varphi_x(0) \quad  s-H^2(\mathbb T).
$$
This concludes the proof of the strong continuity of $\varphi$ in $H^3(\mathbb T)$ (recall that $\varphi$ has zero spatial mean).

{\it Step (3.9): The strong limit $\varphi$ satisfies the problem \eqref{onde_integro_diff} \eqref{id}}. For every $m$ we know that $\varphi_m$ satisfies the equation
\begin{equation}\label{eq_m}
\varphi_{m,tt}- (\mu-2\phi_{m,x})\varphi_{m,xx}+\mathcal Q[\varphi_m]=0\,\quad \mbox{in}\,\,\,[0, T]\times \mathbb T\,.
\end{equation}
Our goal here is passing to the limit as $m\to +\infty$ in each term of the above equation to show that $\varphi$ is still a solution to the same equation. From \eqref{conv_debole} and since $H^3(\mathbb T)$ is compactly imbedded in $H^2(\mathbb T)$, we find that
\begin{equation}\label{pass_lim_1}
\begin{split}
&\varphi_m(t)\rightharpoonup\varphi(t)\,,\quad{w-}H^3(\mathbb T)\quad\Rightarrow\quad \varphi_m(t)\rightarrow\varphi(t)\,,\quad\mbox{s-}H^2(\mathbb T)\\
&\Rightarrow\quad \varphi_{m,x}(t)\rightarrow\varphi_x(t)\quad\mbox{s-}H^1(\mathbb T)\quad \Rightarrow\quad \phi_{m,x}(t)\rightarrow\phi_x(t)\quad\mbox{s-}H^1(\mathbb T)\\
&\Rightarrow\quad \mu-2\phi_{m,x}(t)\rightarrow\mu-2\phi_x(t)\quad\mbox{s-}H^1(\mathbb T)\,,\quad\mbox{uniformly in}\,\,t\in[0, T]
\end{split}
\end{equation}
and
\begin{equation}\label{pass_lim_2}
\begin{split}
&\varphi_{m,x}(t)\rightarrow\varphi_x(t)\quad\mbox{s-}H^1(\mathbb T)\quad \Rightarrow\quad \varphi_{m,xx}(t)\rightarrow\varphi_{xx}(t)\quad\mbox{s-}L^2(\mathbb T)\,,\quad\mbox{uniformly in}\,\,t\in[0, T]\,.
\end{split}
\end{equation}
From \eqref{pass_lim_1} and \eqref{pass_lim_2} and using Lemma \ref{lemma_prod_alg} we find
\begin{equation}\label{pass_lim_3}
(\mu-2\phi_{m,x}(t))\varphi_{m,xx}(t)\rightarrow (\mu-2\phi_{x}(t))\varphi_{xx}(t)\quad\mbox{s-}L^2(\mathbb T)\,,\quad\mbox{uniformly in}\,\,t\in[0, T]\,.
\end{equation}
We consider now the nonlinear term
\begin{equation*}
\mathcal Q[\varphi_m]=-3\left[\mathbb H\,;\,\phi_{m,x}\right]\phi_{m,xx}-\left[\mathbb H\,;\,\phi_{m}\right]\phi_{m,xxx}\,.
\end{equation*}
Expanding the first commutator in the right-hand side above as
\begin{equation*}
\left[\mathbb H\,;\,\phi_{m,x}\right]\phi_{m,xx}=\mathbb H[\phi_{m,x}\phi_{m,xx}]-\phi_{m,x}\mathbb H[\phi_{m,xx}]
\end{equation*}
we note that the latter appears to be the sum of products of two sequences, one $\{\phi_{m,xx}\}$ strongly convergent in $L^2(\mathbb T)$ and the other $\{\phi_{m,x}\}$ strongly convergent in $H^1(\mathbb T)$ (as a straightforward consequence of \eqref{pass_lim_1} and \eqref{pass_lim_2} and the Sobolev continuity of $\mathbb H$). Then in view of Lemma \ref{lemma_prod_alg} the product of these terms is strongly convergent in $L^2(\mathbb T)$ to the product of the limits, hence
\begin{equation}\label{pass_lim_4}
\left[\mathbb H\,;\,\phi_{m,x}(t)\right]\phi_{m,xx}(t)\rightarrow \left[\mathbb H\,;\,\phi_{x}(t)\right]\phi_{xx}(t)\,,\quad\mbox{s-}L^2(\mathbb T)\,,\quad\mbox{uniformly in}\,\,t\in[0, T]\,.
\end{equation}
It remains to prove that the commutator $\left[\mathbb H\,;\,\phi_{m}\right]\phi_{m,xxx}$ strongly converges to $\left[\mathbb H\,;\,\phi\right]\phi_{xxx}$ in $L^2(\mathbb T)$, uniformly in $t\in[0, T]$. To prove it we first compute for each $t\in[0, T]$:
\begin{equation*}
\begin{split}
\Vert &\left[\mathbb H\,;\,\phi_{m}(t)\right]\phi_{m,xxx}(t)-\left[\mathbb H\,;\,\phi(t)\right]\phi_{xxx}(t)\Vert_{L^2(\mathbb T)}\\
&= \Vert \left[\mathbb H\,;\,\phi_{m}(t)\right]\phi_{m,xxx}(t)-\left[\mathbb H\,;\,\phi(t)\right]\phi_{m,xxx}(t)+\left[\mathbb H\,;\,\phi(t)\right]\phi_{m,xxx}(t)-\left[\mathbb H\,;\,\phi(t)\right]\phi_{xxx}(t)\Vert_{L^2(\mathbb T)}\\
&\le\Vert \left[\mathbb H\,;\,\phi_{m}(t)-\phi(t)\right]\phi_{m,xxx}(t)\Vert_{L^2(\mathbb T)}+\Vert \left[\mathbb H\,;\,\phi(t)\right](\phi_{m,xxx}(t)-\phi_{xxx}(t))\Vert_{L^2(\mathbb T)}\,.
\end{split}
\end{equation*}
Now we use estimate \eqref{stima_comm_p} of Lemma \ref{lemma_comm_ale_2} with $\sigma=0$, $p=2$, $v=\phi_{m}(t)-\phi(t)$ and $f=\phi_{m,x}(t)$ to get
\begin{equation}\label{stima_pass_lim1}
\Vert \left[\mathbb H\,;\,\phi_{m}(t)-\phi(t)\right]\phi_{m,xxx}(t)\Vert_{L^2(\mathbb T)}\le C\Vert\phi_{m,xx}(t)-\phi_{xx}(t)\Vert_{L^2(\mathbb T)}\Vert \phi_{m,x}(t)\Vert_{H^1(\mathbb T)}\,;
\end{equation}
similarly, estimate \eqref{stima_comm_p} of Lemma \ref{lemma_comm_ale_2} with $\sigma=0$, $p=2$, $v=\phi(t)$ and $f=\phi_{m,x}(t)-\phi_x(t)$ gives
\begin{equation}\label{stima_pass_lim2}
\Vert \left[\mathbb H\,;\,\phi(t)\right](\phi_{m,xxx}(t)-\phi_{xxx}(t))\Vert_{L^2(\mathbb T)}\le C\Vert\phi_{xx}(t)\Vert_{L^2(\mathbb T)}\Vert \phi_{m,x}(t)-\phi_{x}(t)\Vert_{H^1(\mathbb T)}\,.
\end{equation}
From \eqref{stima_pass_lim1} and \eqref{stima_pass_lim2} we then get
\begin{equation*}
\begin{split}
\Vert &\left[\mathbb H\,;\,\phi_{m}(t)\right]\phi_{m,xxx}(t)-\left[\mathbb H\,;\,\phi(t)\right]\phi_{xxx}(t)\Vert_{L^2(\mathbb T)}\\
&\le C\Vert\phi_{m,xx}(t)-\phi_{xx}(t)\Vert_{L^2(\mathbb T)}\Vert \phi_{m,x}(t)\Vert_{H^1(\mathbb T)}+C\Vert\phi_{xx}(t)\Vert_{L^2(\mathbb T)}\Vert \phi_{m,x}(t)-\phi_{x}(t)\Vert_{H^1(\mathbb T)}\,,
\end{split}
\end{equation*}
from  which we derive that
\begin{equation}\label{pass_lim_5}
\left[\mathbb H\,;\,\phi_{m}(t)\right]\phi_{m,xxx}(t)\rightarrow\left[\mathbb H\,;\,\phi(t)\right]\phi_{xxx}(t)\,,\quad\mbox{s-}L^2(\mathbb T)\,,\quad\mbox{uniformly in}\,\,t\in[0, T]\,,
\end{equation}
as a consequence of the convergence in \eqref{pass_lim_1}, \eqref{pass_lim_2} and the Sobolev continuity of $\mathbb H$. From \eqref{pass_lim_4} and \eqref{pass_lim_5} we derive that
\begin{equation}\label{pass_lim_6}
\mathcal Q[\varphi_m]\rightarrow\mathcal Q[\varphi]\,,\quad\mbox{s-}L^2(\mathbb T)\,,\quad\mbox{uniformly in}\,\,t\in[0, T]\,.
\end{equation}
If we solve the equation \eqref{eq_m} with respect to $\varphi_{m,tt}$, the results collected before show that
\begin{equation*}
\varphi_{m,tt}\rightarrow (\mu-2\phi_x)\varphi_{xx}-\mathcal Q[\varphi]\,,\quad\mbox{s-}L^2(\mathbb T)\,,\quad\mbox{uniformly in}\,\,t\in[0, T]\,.
\end{equation*}
On the other hand, from
\begin{equation}\label{debole_star_H3}
\varphi_m\rightharpoonup^\ast\varphi\,,\quad\mbox{in}\,\,L^\infty(0, T; H^3(\mathbb T))
\end{equation}
it follows that
\begin{equation*}
\varphi_m\rightarrow\varphi\,,\quad\mbox{in}\,\,\mathcal D^\prime((0, T)\times\mathbb T)\quad\Rightarrow\quad \varphi_{m,tt}\rightarrow\varphi_{tt}\,,\quad\mbox{in}\,\,\mathcal D^\prime((0, T)\times\mathbb T)\,.
\end{equation*}
Hence, from uniqueness of the limit, we get
\begin{equation*}
\varphi_{tt}=(\mu-2\phi_x)\varphi_{xx}-\mathcal Q[\varphi]\,,\quad\mbox{in}\,\,\mathcal D^\prime((0,  T)\times\mathbb T)\,,
\end{equation*}
which ends the proof.

\vspace{.15cm}
In the end, let us show that $\varphi$ satisfies the initial conditions \eqref{id}. From \eqref{pass_lim_1} we have that
\begin{equation*}
\varphi_m(0)=\varphi_m^{(0)}\rightarrow\varphi(0)\,,\quad\mbox{s-}H^2(\mathbb T)\,.
\end{equation*}
On the other hand,
\begin{equation*}
\varphi_m^{(0)}\rightarrow\varphi^{(0)}\,,\quad\mbox{s-}H^3(\mathbb T)\,,
\end{equation*}
by construction, see \eqref{convergenze}. Thus $\varphi(0)=\varphi^{(0)}$ follows from uniqueness of the limit in $H^2(\mathbb T)$.

Arguing as in the Step (3.4) above, one can prove that
\begin{equation*}
\varphi_{m,t}(t)\rightharpoonup\varphi_t(t)\,,\quad\mbox{w-}H^2(\mathbb T)\,,\quad\mbox{uniformly in}\,\,t\in[0, T]\,,
\end{equation*}
hence, by compactness of the imbedding $H^2(\mathbb T)\hookrightarrow H^1(\mathbb T)$,
\begin{equation*}
\varphi_{m,t}(0)=\varphi^{(1)}_m\rightarrow\varphi_t(0)\,,\quad\mbox{s-}H^1(\mathbb T)\,.
\end{equation*}
On the other hand
\begin{equation*}
\varphi^{(1)}_m\rightarrow\varphi^{(1)}\,,\quad\mbox{s-}H^2(\mathbb T)
\end{equation*}
by construction, see \eqref{convergenze} once again. Thus $\varphi_t(0)=\varphi^{(1)}$ follows from uniqueness of the limit in $H^1(\mathbb T)$.
We have proved that $\varphi$ is a solution of problem \eqref{onde_integro_diff} \eqref{id} on $[0, T]$.

\vspace{.5cm}
{\it Step (4)}: Passing to the limit as $m\to +\infty$ into the estimate \eqref{stima_2m} for an arbitrary $t\in[0, T]$ and in view of the convergence
\begin{equation*}
\varphi_{m,x}(t)\rightharpoonup\varphi_x(t)\,,\quad \varphi_{m,t}(t)\rightharpoonup\varphi_t(t)\,,\quad\mbox{in}\,\,H^2(\mathbb T)\,,
\end{equation*}
(cf. Steps (3.4), (3.5)) and the lower semi-continuity of the $H^2-$norm with respect to the weak convergence, we get that $\varphi$ satisfies
\begin{equation}\label{stima_2_phi}
\begin{split}
\Vert\varphi_{x}(t)\Vert^2_{H^2(\mathbb T)}+ \Vert\varphi_{t}(t)\Vert^2_{H^2(\mathbb T)}
&\leq \frac{4\rho^2}{\min\left\{1,\frac{\delta}{2}\right\}}\,,\quad\forall\,t\in[0, T]\,.
\end{split}
\end{equation}

Because of the arbitrariness of $ R$ satisfying \eqref{Rtilde}, let us now argue as before on a decreasing sequence $\{ R_\nu\}$ such that
\begin{equation*}
\Vert\varphi^{(0)}_x\Vert^2_{H^2(\mathbb T)}+\Vert\varphi^{(1)}\Vert^2_{H^2(\mathbb T)}< R_\nu^2<R_0^2\,,\qquad  R_\nu\searrow (\Vert\varphi^{(0)}_x\Vert^2_{H^2(\mathbb T)}+\Vert\varphi^{(1)}\Vert^2_{H^2(\mathbb T)})^{1/2}\,,
\end{equation*}
(thus $ T_\nu:=\frac1{2C_\delta\rho_\nu}\nearrow T_0$). Because of the uniqueness of the solution, this gives that the solution $\varphi$ of problem \eqref{onde_integro_diff} \eqref{id} exists on $[0, T_0)$ such that
\begin{equation*}
\varphi\in C([0,T_0); H^3(\mathbb T))\cap C^1([0,T_0); H^2(\mathbb T))\,.
\end{equation*}
Moreover, passing to the limit as $\nu\to +\infty$ into the estimate \eqref{stima_2_phi} written on $[0, T_\nu]$ (with $\rho_\nu$ instead of $\rho$) and using Poincar\'e's inequality we find that $\varphi$ satisfies on $[0,T_0)$ the energy estimate
\begin{equation}\label{stima_2_phi1}
\begin{split}
\Vert\varphi(t)\Vert^2_{H^3(\mathbb T)}+ \Vert\varphi_{t}(t)\Vert^2_{H^2(\mathbb T)}
&\leq \frac{4(1+\mu+C)}{\min\left\{1,\frac{\delta}{2}\right\}}\left\{\Vert\varphi^{(0)}\Vert^2_{H^3(\mathbb T)}+ \Vert\varphi^{(1)}\Vert^2_{H^2(\mathbb T)}\right\}\,,\quad\forall\,t\in[0,T_0)\,,
\end{split}
\end{equation}
which is estimate \eqref{stima_energia_2} with $C_2=\frac{4(1+\mu+C)}{\min\left\{1,\frac{\delta}{2}\right\}}$.

\begin{remark}\label{}
Using the a priori estimate \eqref{stima_2_phi1} and applying some of the previous arguments, we can prove that $\varphi$ can be extended up to $t=T_0$ as a $H^3$ function and that $\varphi\in C_w([0, T_0]; H^3(\mathbb T))$ with $\varphi_t\in C_w([0, T_0]; H^2(\mathbb T))$.
\end{remark}


\subsection{Proof of Statement (2)}
Let us consider now the case of initial data $\varphi^{(0)}\in H^s(\mathbb T)$, $\varphi^{(1)}\in H^{s-1}(\mathbb T)$, with $s>3$, satisfying \eqref{ip_dati_iniziali} with the $R_0>0$ constructed in the proof of Statement (1). As before, we take sequences $\{\varphi^{(0)}_m\}$, $\{\varphi^{(1)}_m\}$ in $H^{\infty}(\mathbb T)$, with zero spatial mean, such that
 \begin{equation}\label{convergenze_s}
 \begin{split}
 &\varphi^{(0)}_m \rightarrow \varphi^{(0)}\,, \quad \mbox{in}\,\, H^{s}(\mathbb T)\,,\\
 & \varphi^{(1)}_m \rightarrow \varphi^{(1)}\,\quad \mbox{in}\,\, H^{s-1}(\mathbb T)\,;
 \end{split}
 \end{equation}
 satisfying \eqref{ipotesi_dati_appr} and \eqref{sign_cond_m_id}. For each $m$, let $\varphi_m$ the solution of problem \eqref{Pm}. From the proof of statement (1), we know that $\varphi_m$ satisfies the regularity assumptions \eqref{reg_phim} on $[0,T]$, for all positive $T<T_0$, with $T_0$ defined in \eqref{T0_nuova} and condition \eqref{sign_cond_ms}. From Theorem \ref{mainHs}, statement (2), with $r=s-1$, Poincar\'e's inequality and the boundedness of the sequences $\{\varphi^{(0)}_m\}$ and $\{\varphi^{(1)}_m\}$ respectively in $H^s(\mathbb T)$ and $H^{s-1}(\mathbb T)$, we find that
\begin{equation}\label{norme_limitate}
\Vert\varphi_{m}(t)\Vert^2_{H^s(\mathbb T)}+\Vert\varphi_{m,t}(t)\Vert^2_{H^{s-1}(\mathbb T)}\le M\,,\quad\forall\,t\in[0,T]\,,
\end{equation}
from which
\begin{equation*}
\varphi_{m}\rightharpoonup^\ast \widetilde\varphi\,,\quad\mbox{in}\,\,L^\infty(0,T; H^s(\mathbb T))
\end{equation*}
and
\begin{equation*}
\varphi_{m,t}\rightharpoonup^\ast \widetilde\varphi_t\,,\quad\mbox{in}\,\,L^\infty(0,T; H^{s-1}(\mathbb T))\,,
\end{equation*}
for a suitable function $\widetilde\varphi\in L^\infty(0,T; H^s(\mathbb T))\cap W^{1,\infty}(0,T; H^{s-1}(\mathbb T))$. Since from statement (1) we already know that
\begin{equation*}
\varphi_{m}\rightharpoonup^\ast \varphi\,,\quad\mbox{in}\,\,L^\infty(0,T; H^3(\mathbb T))\,,
\end{equation*}
being $\varphi$ the solution to original problem \eqref{onde_integro_diff}, \eqref{id} on $[0,T_0)$, see \eqref{debole_star_H3}, by uniqueness of the limit we find that $\varphi=\widetilde\varphi\in L^\infty(0,T; H^s(\mathbb T))\cap W^{1,\infty}(0,T; H^{s-1}(\mathbb T))$ and
\begin{equation}\label{debole_star_Hs}
\begin{split}
&\varphi_{m}\rightharpoonup^\ast \varphi\,,\quad\mbox{in}\,\, L^\infty(0,T; H^s(\mathbb T))\qquad\mbox{and}\qquad\varphi_{m,t}\rightharpoonup^\ast\varphi_t\quad\mbox{in}\,\,L^\infty(0,T; H^{s-1}(\mathbb T))\,.
\end{split}
\end{equation}
It remains to prove the continuity in time of $\varphi$ and $\varphi_t$. To first get the weak continuity in time, we proceed as in Steps (3.3)--(3.6) above (with $s$ instead of $3$) to find
\begin{equation}\label{reg_debole}
\varphi\in C_w([0,T]; H^s(\mathbb T))\cap C^1_w([0,T]; H^{s-1}(\mathbb T))
\end{equation}
and
\begin{equation}\label{debole_Hs}
\begin{split}
&\varphi_{m}(t)\rightharpoonup \varphi(t)\,,\quad\mbox{in}\,\, H^s(\mathbb T)\qquad\mbox{and}\qquad\varphi_{m,t}(t)\rightharpoonup\varphi_t(t)\quad\mbox{in}\,\,H^{s-1}(\mathbb T)\,,\quad\mbox{uniformly in}\,\,t\in[0,T]\,.
\end{split}
\end{equation}
To recover the strong continuity in time, we follow the same idea of Step (3.8): we prove the convergence of the energy
\begin{equation}\label{convE}
\mathcal E(\varphi(0))=\lim\limits_{t\to 0^+}\mathcal E(\varphi(t))\,,
\end{equation}
where now the energy is
\begin{equation}\label{ys}
\mathcal E(\varphi(t)):=\Vert\varphi_{t}(t)\Vert^2_{H^{s-1}(\mathbb T)}+\int_{\mathbb T}(\mu-2\phi_{x}(t))\vert\langle\partial_x\rangle^{s-1}\varphi_{x}(t)\vert^2dx\,.
\end{equation}
The convergence \eqref{convE} together with the weak convergence
\begin{equation}\label{conv_s}
\varphi(t)\rightharpoonup \varphi(0)\,,\quad\mbox{in}\,\, H^s(\mathbb T)\qquad\mbox{and}\qquad\varphi_{t}(t)\rightharpoonup\varphi_t(0)\quad\mbox{in}\,\,H^{s-1}(\mathbb T)\,,\quad\mbox{as}\,\,t\to 0^+
\end{equation}
(see \eqref{reg_debole}) implies the strong convergence
\begin{equation*}
\varphi(t)\rightarrow \varphi(0)\,,\quad\mbox{in}\,\, H^s(\mathbb T)\qquad\mbox{and}\qquad\varphi_{t}(t)\rightarrow\varphi_t(0)\quad\mbox{in}\,\,H^{s-1}(\mathbb T)\,,\quad\mbox{as}\,\,t\to 0^+\,,
\end{equation*}
see Section C.1 in Appendix \ref{Ioffe}.

To get \eqref{convE}, we first observe that Ioffe Theorem \ref{teorema_ioffe} and the weak convergence \eqref{conv_s} we derive
\begin{equation}\label{liminfE}
\mathcal E(\varphi(0))\le\liminf\limits_{t\to 0^+}\mathcal E(\varphi(t))\,.
\end{equation}
We prove that
\begin{equation}\label{limsupE}
\limsup\limits_{t\to 0^+}\mathcal E(\varphi(t))\le\mathcal E(\varphi(0))
\end{equation}
by following again the arguments developed in Step (3.8). From estimate \eqref{stima_s2} written for $\varphi_m$ and \eqref{norme_limitate} we get
\begin{equation*}
\frac{d}{dt} \mathcal E(\varphi_m(t)) \leq C\,, \quad \forall\, t\in[0,T]\,,
\end{equation*}
which gives, integrating on $[0,t]$ for $0<t<T$,
\begin{equation}\label{stima_Em}
\mathcal E(\varphi_m(t)) \leq \mathcal E(\varphi_m(0))+Ct\,, \quad \forall\, t\in[0,T]\,,
\end{equation}
with a positive constant $C$ independent of $m$. We pass to the limit as $m\to +\infty$ in \eqref{stima_Em} in order to find an analogue estimate for $\varphi(t)$. First by applying again Ioffe Theorem \ref{teorema_ioffe}, from \eqref{debole_Hs} we derive
\begin{equation*}
\mathcal E(\varphi(t)) \leq \liminf\limits_{m\to +\infty}\mathcal E(\varphi_m(t))\,, \quad \forall\, t\in[0,T]\,.
\end{equation*}
Passing now to the limit in \eqref{stima_Em} as $m\to +\infty$, in view of \eqref{convergenze_s} we find
\begin{equation}\label{limsup0}
\mathcal E(\varphi(t)) \leq \liminf\limits_{m\to +\infty}\mathcal E(\varphi_m(t))\le \liminf\limits_{m\to +\infty}\mathcal E(\varphi_m(0))+Ct=\lim\limits_{m\to +\infty}\mathcal E(\varphi_m(0))+Ct=\mathcal E(\varphi(0))+Ct\,, \quad \forall\, t\in[0,T]\,.
\end{equation}
Then passing to $\limsup$ as $t\to 0^+$ in \eqref{limsup0} we get \eqref{limsupE}.

In the end we find that the solution $\varphi$ satisfies the a priori estimate \eqref{stima_energia_s} on $[0,T]$; indeed passing to the limit as $m\to +\infty$ into the energy estimate \eqref{stima_Hs} written for $\varphi_m$ with $r=s-1$, where we exploit the weak convergence \eqref{debole_Hs}, the strong convergence \eqref{convergenze_s} and the lower semi-continuity of the Sobolev norms, we get

\begin{equation*}
\begin{split}
\Vert &\varphi_{x}(t)\Vert_{H^{s-1}(\mathbb T)}^2+\Vert\varphi_t(t)\Vert_{H^{s-1}(\mathbb T)}^2\le\liminf\limits_{m\to +\infty}\left\{\Vert\varphi_{m, x}(t)\Vert_{H^{s-1}(\mathbb T)}^2+\Vert\varphi_{m, t}(t)\Vert_{H^{s-1}(\mathbb T)}^2\right\}\\
&\le  C_s\liminf_{m\to +\infty}\left\{\Vert\varphi^{(0)}_{m,x}\Vert^2_{H^{s-1}(\mathbb T)}+\Vert\varphi^{(1)}_m\Vert^2_{H^{s-1}(\mathbb T)}\right\}=C_s\lim_{m\to +\infty}\left\{\Vert\varphi^{(0)}_{m,x}\Vert^2_{H^{s-1}(\mathbb T)}+\Vert\varphi^{(1)}_m\Vert^2_{H^{s-1}(\mathbb T)}\right\}\\
&=C_s\left\{\Vert\varphi^{(0)}_{x}\Vert^2_{H^{s-1}(\mathbb T)}+\Vert\varphi^{(1)}\Vert^2_{H^{s-1}(\mathbb T)}\right\}\le C_s\left\{\Vert\varphi^{(0)}\Vert^2_{H^{s}(\mathbb T)}+\Vert\varphi^{(1)}\Vert^2_{H^{s-1}(\mathbb T)}\right\}\,,\quad\forall\,t\in[0,T]\,.
\end{split}
\end{equation*}
From Poincar\'e inequality the above estimate gives \eqref{stima_energia_s}.

\appendix

\section{Some commutator and product estimates}\label{stima_commutatore}
In this Section, we collect some commutator estimates that will be used throughout the paper.                                                                                                    \begin{lemma}\label{lemma_comm}
 For $\sigma>1/2$ there exists a constant $C_\sigma>0$ such that
\begin{eqnarray*}
\Vert \left[\mathbb H\,;\,v\right]f\Vert_{L^2(\mathbb T)}\le C_\sigma\Vert v\Vert_{H^\sigma(\mathbb T)}\Vert f\Vert_{L^2(\mathbb T)}\,,\quad\forall\,v\in H^\sigma(\mathbb T)\,,\,\,\forall\,f\in L^2(\mathbb T)\,;\\
\Vert \left[\mathbb H\,;\,v\right]f_x\Vert_{L^2(\mathbb T)}\le C_\sigma\Vert v_x\Vert_{H^\sigma(\mathbb T)}\Vert f\Vert_{L^2(\mathbb T)}\,,\quad\forall\,v\in H^{\sigma+1}(\mathbb T)\,,\,\,\forall\,f\in L^2(\mathbb T)\,,
\end{eqnarray*}
where $\left[\mathbb H\,;\,v\right]$ is the commutator between the Hilbert transform $\mathbb H$ and the multiplication by $v$.
\end{lemma}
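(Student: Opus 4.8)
The plan is to prove both estimates by passing to the Fourier side, where the commutator has a completely transparent symbol, and then to reduce everything to Young's inequality for discrete convolutions together with the $\ell^1$--control \eqref{imm_sobolev} of the Fourier coefficients. Writing $[\mathbb H;v]f=\mathbb H[vf]-v\,\mathbb H[f]$ and combining the convolution rule \eqref{convoluzione}--\eqref{conv} for the Fourier coefficients of a product with the definition \eqref{hilbert1} of $\mathbb H$, I would first record the identity
\begin{equation*}
\widehat{[\mathbb H;v]f}(k)=-i\sum_{\ell}\widehat v(k-\ell)\,\widehat f(\ell)\,\bigl({\rm sgn}\,k-{\rm sgn}\,\ell\bigr),\qquad k\in\mathbb Z,
\end{equation*}
which is the backbone of both bounds.

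For the first estimate I would simply use $|{\rm sgn}\,k-{\rm sgn}\,\ell|\le 2$, so that $|\widehat{[\mathbb H;v]f}(k)|\le 2\,(|\widehat v|*|\widehat f|)(k)$. Young's inequality for the convolution $\ell^1*\ell^2\hookrightarrow\ell^2$ then gives $\|\widehat{[\mathbb H;v]f}\|_{\ell^2}\le 2\,\|\widehat v\|_{\ell^1}\|\widehat f\|_{\ell^2}$; invoking Parseval \eqref{parseval_1} on both sides and the embedding \eqref{imm_sobolev}, valid since $\sigma>1/2$, yields the claim with $C_\sigma=2c_\sigma$.

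The second estimate is where the commutator structure is essential. Here $\widehat{f_x}(\ell)=i\ell\,\widehat f(\ell)$, so the symbol acquires the factor $\ell\,({\rm sgn}\,k-{\rm sgn}\,\ell)$. The key observation — and the heart of the whole lemma — is that ${\rm sgn}\,k-{\rm sgn}\,\ell$ vanishes whenever $k$ and $\ell$ have the same sign, and that on the remaining set (where $\ell\neq0$ and $k,\ell$ have opposite signs) one has $|k-\ell|\ge|\ell|\ge 1$. This is precisely the cancellation that lets the derivative be transferred from $f$ to $v$: on that set $|\ell|\le|k-\ell|$, hence $|\ell|\,|\widehat v(k-\ell)|\le|k-\ell|\,|\widehat v(k-\ell)|=|\widehat{v_x}(k-\ell)|$. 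Using also $|{\rm sgn}\,k-{\rm sgn}\,\ell|\le2$, I obtain $|\widehat{[\mathbb H;v]f_x}(k)|\le 2\,(|\widehat{v_x}|*|\widehat f|)(k)$, and the proof then closes exactly as before: Young's inequality $\ell^1*\ell^2\hookrightarrow\ell^2$, Parseval, and \eqref{imm_sobolev} applied now to $v_x$ give $\|[\mathbb H;v]f_x\|_{L^2(\mathbb T)}\le C_\sigma\|v_x\|_{H^\sigma(\mathbb T)}\|f\|_{L^2(\mathbb T)}$.

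I expect the only genuinely delicate step to be the support/sign analysis of $\ell\,({\rm sgn}\,k-{\rm sgn}\,\ell)$: verifying case by case, according to the sign of $\ell$ and the convention ${\rm sgn}\,0=0$ adopted in \eqref{segno}, that a nonvanishing term forces $k$ and $\ell$ to have strictly opposite signs and hence $|k-\ell|\ge|\ell|$. Once this elementary but crucial inequality is secured, both estimates are immediate consequences of Young's inequality and the $\ell^1$--bound of Fourier coefficients by the $H^\sigma$--norm for $\sigma>1/2$.
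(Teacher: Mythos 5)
Your proof is correct and follows essentially the same route as the paper: the paper defers the proof of this lemma to \cite{M-S-T:ONDE1}, but the identical technique --- the Fourier identity \eqref{fourier1}, the vanishing of ${\rm sgn}\,k-{\rm sgn}\,\ell$ on equal-sign frequencies, the transfer of the derivative from $f$ to $v$ via $\vert\ell\vert\le\vert k-\ell\vert$, then Young's inequality $\ell^1\ast\ell^2\hookrightarrow\ell^2$ and \eqref{imm_sobolev} --- is exactly what the paper itself uses in Appendix \ref{stima_commutatore} to prove the more general Lemma \ref{lemma_comm_ale_2}. One harmless wording slip: a nonvanishing term does not force $k$ and $\ell$ to have \emph{strictly} opposite signs, since the case $k=0$, $\ell\neq 0$ also survives (cf.\ the middle line of \eqref{casi_2}), but there $\vert k-\ell\vert=\vert\ell\vert$, so your key inequality $\vert\ell\vert\le\vert k-\ell\vert$ still holds with equality and the estimate is unaffected.
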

\begin{proof}
See \cite{M-S-T:ONDE1}.
\end{proof}

\begin{lemma}\label{lemma_comm_ale_2}
There exists a positive constant $C$ such that for every real $\sigma\ge 0$ and integer $p\ge 0$ the following estimates hold true:
 \begin{itemize}
\item [1.] for all functions $v\in H^{\sigma+p}(\mathbb T)$ and $f\in H^1(\mathbb T)$
\begin{equation}\label{stima_comm_p}
\Vert \left[\mathbb H\,;\,v\right]\partial^p_x f\Vert_{H^\sigma(\mathbb T)}\le C_{\sigma,p}\Vert \partial^{p}_x v\Vert_{H^\sigma(\mathbb T)}\Vert f\Vert_{H^1(\mathbb T)}\,.
\end{equation}
\item [2.] for all functions $v\in H^{\sigma+1+p}(\mathbb T)$ and $f\in L^2(\mathbb T)$
\begin{equation}\label{stima_comm_p2}
\Vert \left[\mathbb H\,;\,v\right]\partial^p_x f\Vert_{H^\sigma(\mathbb T)}\le C_{\sigma,p}\Vert \partial^{p}_x v\Vert_{H^{\sigma+1}(\mathbb T)}\Vert f\Vert_{L^2(\mathbb T)}\,.
\end{equation}
\end{itemize}
\end{lemma}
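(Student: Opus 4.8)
The plan is to prove both estimates at once by passing to Fourier coefficients and exploiting a frequency-gain property hidden in the commutator kernel. Writing $[\mathbb{H};v]\partial^p_x f=\mathbb{H}[v\,\partial^p_x f]-v\,\mathbb{H}[\partial^p_x f]$ and combining the Fourier representation \eqref{hilbert1} of $\mathbb{H}$ with the convolution rule \eqref{conv}, one obtains
\[
\widehat{[\mathbb{H};v]\partial^p_x f}(k)=-i\sum_{\ell\in\mathbb{Z}}\bigl({\rm sgn}\,k-{\rm sgn}\,\ell\bigr)\,\widehat{v}(k-\ell)\,(i\ell)^p\,\widehat{f}(\ell)\,,
\]
so the whole estimate is governed by the kernel ${\rm sgn}\,k-{\rm sgn}\,\ell$.

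The decisive step, which I would isolate first, is the elementary observation that this kernel vanishes whenever $k$ and $\ell$ have the same sign; hence on its support $|k-\ell|\ge\max\{|k|,|\ell|\}$, as one sees by inspecting the cases $k>0\ge\ell$, $k<0\le\ell$, and those with $k$ or $\ell$ equal to $0$. Because $\sigma\ge0$, the inequality $|k-\ell|\ge|k|$ gives $\langle k\rangle^{\sigma}\le\langle k-\ell\rangle^{\sigma}$, while $|k-\ell|\ge|\ell|$ gives $|\ell|^p\le|k-\ell|^p$. Using these transfers of weight onto the $v$-factor, together with $|{\rm sgn}\,k-{\rm sgn}\,\ell|\le2$, I would bound
\[
\langle k\rangle^{\sigma}\,\bigl|\widehat{[\mathbb{H};v]\partial^p_x f}(k)\bigr|\le 2\sum_{\ell\in\mathbb{Z}}\langle k-\ell\rangle^{\sigma}\,|k-\ell|^p\,|\widehat{v}(k-\ell)|\,|\widehat{f}(\ell)|=2\,(a*b)(k)\,,
\]
where $a(m):=\langle m\rangle^{\sigma}|m|^p|\widehat{v}(m)|$ and $b(\ell):=|\widehat{f}(\ell)|$ (extending the sum to all $\ell$ only adds nonnegative terms). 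By \eqref{normaHs} one has $\|a\|_{\ell^2}=(2\pi)^{-1/2}\|\partial^p_x v\|_{H^{\sigma}}$ and $\|b\|_{\ell^2}=(2\pi)^{-1/2}\|f\|_{L^2}$.

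From this point the two claimed bounds follow from the two dual forms of Young's convolution inequality. For \eqref{stima_comm_p} I would use $\|a*b\|_{\ell^2}\le\|a\|_{\ell^2}\|b\|_{\ell^1}$ and control $\|b\|_{\ell^1}=\|\{\widehat f(k)\}\|_{\ell^1}\le c_1\|f\|_{H^1}$ by the embedding \eqref{imm_sobolev} with $\tau=1>1/2$, which delivers the factor $\|\partial^p_x v\|_{H^{\sigma}}\|f\|_{H^1}$. For \eqref{stima_comm_p2} I would instead use $\|a*b\|_{\ell^2}\le\|a\|_{\ell^1}\|b\|_{\ell^2}$ and bound $\|a\|_{\ell^1}$ by Cauchy--Schwarz, writing $\langle m\rangle^{\sigma}|m|^p=\langle m\rangle^{-1}\,\langle m\rangle^{\sigma+1}|m|^p$ and invoking $\sum_{m\in\mathbb{Z}}\langle m\rangle^{-2}<+\infty$; this yields the factor $\|\partial^p_x v\|_{H^{\sigma+1}}\|f\|_{L^2}$. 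The degenerate case $p=0$ is covered by the same computation with the convention $|m|^0=1$, so that $a(m)=\langle m\rangle^{\sigma}|\widehat v(m)|$.

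The routine ingredients are Young's inequality, Cauchy--Schwarz, the summability $\sum_m\langle m\rangle^{-2}<+\infty$, and the embedding \eqref{imm_sobolev}; the one genuinely structural point — and the step I would write with care — is the gain inequality $|k-\ell|\ge\max\{|k|,|\ell|\}$ on the support of ${\rm sgn}\,k-{\rm sgn}\,\ell$. This cancelation is precisely what lets the commutator absorb the $p$ derivatives of $f$ onto $v$ with no loss in \eqref{stima_comm_p} and with the single expected derivative in \eqref{stima_comm_p2}; it also shows that the constant may in fact be taken independent of $\sigma$ and $p$.
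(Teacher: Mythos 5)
Your proof is correct and follows essentially the same route as the paper: both pass to Fourier coefficients via the representation $\widehat{[\mathbb H;v]u}(k)=-\sum_{\ell}i({\rm sgn}\,k-{\rm sgn}\,\ell)\widehat v(k-\ell)\widehat u(\ell)$, exploit the vanishing of the kernel when $k\ell>0$ to transfer the weights $\langle k\rangle^{\sigma}$ and $|\ell|^{p}$ onto the $\widehat v$-factor (the paper does this by the explicit case split $k>0$, $k=0$, $k<0$ and the inequalities $|k|<|k-\ell|$, $|\ell|<|k-\ell|$, which is exactly your support inequality $|k-\ell|\ge\max\{|k|,|\ell|\}$), and then conclude with the two dual forms of Young's inequality together with the $\ell^1$-embedding \eqref{imm_sobolev}. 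Your unified treatment of the sign cases is a mild streamlining, and your closing remark that the constant is independent of $\sigma$ and $p$ agrees with what the paper's own computation yields in \eqref{stima_ale_2}.
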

\begin{proof}
Using \eqref{convoluzione} and \eqref{hilbert1}, for all sufficiently smooth functions $u$, $v$  one computes
\begin{equation}\label{fourier1}
\begin{split}
\widehat{[\mathbb H\,;\,v]u}(k)&=\widehat{\mathbb H[vu]}(k)-\widehat{v\mathbb H[u]}(k)=-i{\rm sgn}\,k\sum\limits_{\ell}\widehat{v}(k-\ell)\widehat{u}(\ell)-\sum\limits_{\ell}\widehat{v}(k-\ell)(-i{\rm sgn}\,\ell)\widehat{u}(\ell)\\
&=-\sum\limits_{\ell}i({\rm sgn}\,k-{\rm sgn}\,\ell)\widehat{v}(k-\ell)\widehat{u}(\ell).
\end{split}
\end{equation}
Applying \eqref{fourier1} with $u=\partial^p_x f$ and noticing that ${\rm sgn}\,k-{\rm sgn}\,\ell=0$ as long as $k\ell>0$, we get
\begin{equation}\label{casi_2}
\widehat{\left[\mathbb H\,;\,v\right]\partial^p_x f}(k)=\begin{cases}\displaystyle -2i\sum\limits_{\ell<0}\widehat{v}(k-\ell)(i\ell)^p\widehat{f}(\ell)\,,\quad\mbox{if}\,\,k>0\,,\\
\displaystyle i\sum\limits_{\ell}{\rm sgn}\,\ell\,\widehat{v}(-\ell)(i\ell)^p\widehat{f}(\ell)\,,\quad\mbox{if}\,\,k=0\,,\\
\displaystyle 2i\sum\limits_{\ell>0}\widehat{v}(k-\ell)(i\ell)^p\widehat{f}(\ell)\,,\quad\mbox{if}\,\,k<0\,.
\end{cases}
\end{equation}
On the other hand, in view of \eqref{normaHs}
\begin{equation}\label{stima_ale_1}
\Vert\left[\mathbb H\,;\,v\right]\partial^p_x f\Vert^2_{H^\sigma(\mathbb T)}=2\pi\sum\limits_{k\in \mathbb Z}\langle k\rangle^{2\sigma}\vert\widehat{\left[\mathbb H\,;\,v\right]\partial^p_x f}(k)\vert^2\,.
\end{equation}
From \eqref{casi_2} and using the trivial inequality $\langle -\ell\rangle\ge 1$, we get for $k=0$:
\begin{equation}\label{coeff_0_ale}
\begin{split}
\vert \widehat{\left[\mathbb H\,;\,v\right]\partial^p_x f}(0)\vert &\le\sum\limits_{\ell}\vert\widehat{v}(-\ell)\vert\,\vert\ell\vert^p\,\vert\widehat{f}(\ell)\vert\le\sum\limits_{\ell}\langle -\ell\rangle^\sigma \vert -i\ell\vert^{p} \vert\widehat{v}(-\ell)\vert\vert\widehat{f}(\ell)\vert\\
&=\left(\vert\langle\cdot\rangle^\sigma\widehat{\partial_x^{p} v}\vert\ast\vert \widehat f\vert\right)(0)=\left(\vert\widehat{\langle\partial_x\rangle^\sigma\partial_x^{p}v}\vert\ast\vert \widehat f\vert\right)(0)\,.
\end{split}
\end{equation}
For $k>0$ we obtain
\begin{equation}\label{coeff_k+_ale}
\begin{split}
\langle k\rangle^\sigma\,\vert & \widehat{\left[\mathbb H\,;\,v\right]\partial^p_x f}(k)\vert\le 2\sum\limits_{\ell<0}\langle k\rangle^\sigma\,\vert\widehat{v}(k-\ell)\vert\,\vert\ell\vert^p\,\vert\widehat{f}(\ell)\vert\\
&\le 2\sum\limits_{\ell<0}\langle k-\ell\rangle^\sigma\vert i(k-\ell)\vert^{p}\,\vert\widehat{v}(k-\ell)\vert\,\vert\widehat{f}(\ell)\vert\\
&\le 2\sum\limits_{\ell< 0} \langle k-\ell\rangle^\sigma\vert\widehat{\partial^{p}_x v}(k-\ell)\vert\,\vert\widehat{f}(\ell)\vert=2(\vert\widehat{\langle\partial_x\rangle^\sigma\partial^{p}_x v}\vert\ast\vert\widehat{f}\vert)(k)\,,
\end{split}
\end{equation}
where we used that $\vert k\vert<\vert k-\ell\vert$, hence $\langle k\rangle<\langle k-\ell\rangle$, and $\vert\ell\vert<\vert k-\ell\vert$ for $k>0$ and $\ell<0$. The same estimate as above can be extended to the coefficients $\vert\widehat{\left[\mathbb H\,;\,v\right]\partial^p_x f}(k)\vert$, for $k<0$, by repeating the same arguments and since the inequalities $\vert k\vert<\vert k-\ell\vert$ and $\vert\ell\vert<\vert k-\ell\vert$ are still true for $k<0$ and $\ell>0$.

Using \eqref{coeff_0_ale}, \eqref{coeff_k+_ale} to estimate the right-hand side of \eqref{stima_ale_1}, by Young's inequality and \eqref{imm_sobolev} we obtain
\begin{equation}\label{stima_ale_2}
\begin{split}
\Vert &\left[\mathbb H\,;\,v\right]\partial^p_x f\Vert^2_{H^\sigma(\mathbb T)}\le 8\pi\sum\limits_{k\in \mathbb Z}\left(\vert\widehat{\langle\partial_x\rangle^\sigma\partial^{p}_x v}(k)\vert\ast\vert\widehat{f}(k)\vert\right)^2\le 8\pi\left(\sum\limits_{k\in \mathbb Z}\vert\widehat{\langle\partial_x\rangle^\sigma\partial^{p}_x v}(k)\vert^2\right)\left(\sum\limits_{k\in \mathbb Z}\vert\widehat{f}(k)\vert\right)^2\\
&\le C\Vert\partial^{p}_x v\Vert^2_{H^\sigma(\mathbb T)}\Vert f\Vert^2_{H^1(\mathbb T)}\,,
\end{split}
\end{equation}
with some numerical constant $C$ independent of $\sigma$ and $p$. This ends the proof of the first part of Lemma. As for the second part it is sufficient to interchange the role of $v$ and $f$ in the above application of the Young inequality, by taking the $\ell^1$-norm of  $\{\widehat{\langle\partial_x\rangle^\sigma\partial^{p}_x v}(k)\}$ and the $\ell^2$-norm of $\{\widehat{f}(k)\}$.
\end{proof}


\begin{lemma}\label{commutatore_ds}
For every real $r\ge 1$ there exists a constant $C_r>0$ such that for all $f\in H^{r-1}(\mathbb T)\cap H^1(\mathbb T)$ and $v\in H^r(\mathbb T)\cap H^2(\mathbb T)$ the following estimate holds true
\begin{equation}\label{stima_comm_ds}
\Vert \left[\langle\partial_x\rangle^r\,;\,v\right]f\Vert_{L^2(\mathbb T)}\le C_r\left\{\Vert v\Vert_{H^r(\mathbb T)}\Vert f\Vert_{H^1(\mathbb T)}+\Vert v_x\Vert_{H^1(\mathbb T)}\Vert f\Vert_{H^{r-1}(\mathbb T)}\right\}\,.
\end{equation}
\end{lemma}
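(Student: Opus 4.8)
The plan is to work entirely on the Fourier side, exactly as in the proof of Lemma \ref{lemma_comm_ale_2}. First I would compute the Fourier coefficients of the commutator: using \eqref{convoluzione}, \eqref{conv} and the definition \eqref{ds} of $\langle\partial_x\rangle^r$ (more precisely \eqref{operatore2}), one gets
\[
\widehat{[\langle\partial_x\rangle^r;v]f}(k)=\sum_\ell\bigl(\langle k\rangle^r-\langle\ell\rangle^r\bigr)\widehat v(k-\ell)\widehat f(\ell),\qquad k\in\mathbb Z.
\]
The diagonal term $\ell=k$ carries the vanishing factor $\langle k\rangle^r-\langle k\rangle^r=0$; equivalently, since multiplication by the constant $\widehat v(0)$ commutes with any Fourier multiplier, $[\langle\partial_x\rangle^r;v]f=[\langle\partial_x\rangle^r;v-\widehat v(0)]f$. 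Hence I may assume from the outset that $v$ has zero spatial mean, $\widehat v(0)=0$; this reduction increases neither $\Vert v\Vert_{H^r}$ nor $\Vert v_x\Vert_{H^1}$, since dropping the mode $k=0$ only decreases the $H^r$-norm and leaves $v_x$ unchanged.

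The key pointwise estimate on the symbol is the mean value inequality
\[
\bigl|\langle k\rangle^r-\langle\ell\rangle^r\bigr|\le C_r\,\langle k-\ell\rangle\bigl(\langle k\rangle^{r-1}+\langle\ell\rangle^{r-1}\bigr),
\]
valid for $r\ge1$, obtained by applying the mean value theorem to $t\mapsto(1+t^2)^{r/2}$ (whose derivative is bounded by $r\langle t\rangle^{r-1}$, an increasing function of $|t|$ when $r\ge1$) together with $|k-\ell|\le\langle k-\ell\rangle$. Combining this with the elementary subadditivity bound $\langle k\rangle^{r-1}\le C_r(\langle k-\ell\rangle^{r-1}+\langle\ell\rangle^{r-1})$ (again for $r\ge1$, coming from $\langle k\rangle\le\sqrt2(\langle k-\ell\rangle+\langle\ell\rangle)$) and distributing the factors, I would bound the absolute value of the $k$-th Fourier coefficient by a sum of two convolutions,
\[
\bigl|\widehat{[\langle\partial_x\rangle^r;v]f}(k)\bigr|\le C_r\bigl(S_1(k)+S_2(k)\bigr),
\]
where $S_1=\bigl(\langle\cdot\rangle^r|\widehat v|\bigr)\ast|\widehat f|$ and $S_2=\bigl(\langle\cdot\rangle|\widehat v|\bigr)\ast\bigl(\langle\cdot\rangle^{r-1}|\widehat f|\bigr)$.

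Next I would estimate each convolution in $\ell^2$ by Young's inequality $\Vert a\ast b\Vert_{\ell^2}\le\Vert a\Vert_{\ell^1}\Vert b\Vert_{\ell^2}$, letting in each case the factor of lower regularity carry the $\ell^1$-norm and invoking the discrete Sobolev embedding \eqref{imm_sobolev}. For $S_1$ I put $|\widehat f|$ in $\ell^1$, so that $\Vert S_1\Vert_{\ell^2}\le\Vert\langle\cdot\rangle^r|\widehat v|\Vert_{\ell^2}\,\Vert|\widehat f|\Vert_{\ell^1}\le C\Vert v\Vert_{H^r}\Vert f\Vert_{H^1}$, using \eqref{normaHs} and \eqref{imm_sobolev} with $\tau=1>1/2$. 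For $S_2$ I put $\langle\cdot\rangle|\widehat v|$ in $\ell^1$: here the zero-mean reduction is decisive, since for $j\ne0$ one has $\langle j\rangle|\widehat v(j)|\le\sqrt2\,|j|\,|\widehat v(j)|=\sqrt2\,|\widehat{v_x}(j)|$ while the term $j=0$ now vanishes, whence $\Vert\langle\cdot\rangle|\widehat v|\Vert_{\ell^1}\le\sqrt2\,\Vert|\widehat{v_x}|\Vert_{\ell^1}\le C\Vert v_x\Vert_{H^1}$ by \eqref{imm_sobolev}; this yields $\Vert S_2\Vert_{\ell^2}\le C\Vert v_x\Vert_{H^1}\Vert f\Vert_{H^{r-1}}$. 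Finally, by Parseval's identity \eqref{parseval_1}, $\Vert[\langle\partial_x\rangle^r;v]f\Vert_{L^2}$ is controlled by $\Vert\widehat{[\langle\partial_x\rangle^r;v]f}\Vert_{\ell^2}\le C_r(\Vert S_1\Vert_{\ell^2}+\Vert S_2\Vert_{\ell^2})$, which is precisely \eqref{stima_comm_ds}. The one genuinely delicate point is securing exactly $\Vert v_x\Vert_{H^1}$ (a single spared derivative on $v$) in the second term, rather than the $\Vert v\Vert_{H^{3/2+}}$ that a naive $\ell^1$-bound would force; this is exactly what the vanishing of the diagonal term—i.e. the zero-mean reduction—buys, by allowing the $\ell^1$-estimate of $\langle\cdot\rangle|\widehat v|$ to be transferred to $|\widehat{v_x}|$.
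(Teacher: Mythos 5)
Your proposal is correct and follows essentially the same path as the paper's proof: you compute the Fourier coefficients of the commutator, bound the symbol difference via the mean value theorem by $C_r\,\vert k-\ell\vert\,\bigl(\langle k-\ell\rangle^{r-1}+\langle\ell\rangle^{r-1}\bigr)$, split into the two convolutions $S_1$, $S_2$, and conclude with Young's inequality together with the embedding \eqref{imm_sobolev}. The only (correct but unnecessary) extra step is your zero-mean reduction $v\mapsto v-\widehat v(0)$: it is forced because you weaken $\vert k-\ell\vert$ to $\langle k-\ell\rangle$ in $S_2$, whereas the paper keeps the exact factor $\vert k-\ell\vert\,\vert\widehat v(k-\ell)\vert=\vert\widehat{v_x}(k-\ell)\vert$, so the frequency-zero mode of $v$ drops out automatically and no reduction is needed.
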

\begin{proof}
For all $k\in\mathbb Z$ we compute
\begin{equation}\label{coeff_fourier_comm}
\begin{split}
\widehat{\left[\langle\partial_x\rangle^r\,;\,v\right]f}(k)&=\langle k\rangle^r\widehat{vf}(k)-\widehat{v\langle\partial_x\rangle^r f}(k)\\
&=\langle k\rangle^r\sum\limits_{\ell}\widehat{v}(k-\ell)\widehat f(\ell)-\sum\limits_{\ell}\widehat{v}(k-\ell)\langle\ell\rangle^r\widehat{f}(\ell)\\
&=\sum\limits_{\ell}\left(\langle k\rangle^r-\langle\ell\rangle^r\right)\widehat{v}(k-\ell)\widehat{f}(\ell)\,.
\end{split}
\end{equation}
On the other hand we have
\begin{equation*}
\langle k\rangle^r-\langle\ell\rangle^r=\int_0^1\frac{d}{d\theta}\left(\langle \ell+\theta(k-\ell)\rangle^r\right)\,d\theta
=(k-\ell)\int_0^1 D\left(\langle \cdot\rangle^r\right)(\ell+\theta(k-\ell))d\theta\,,
\end{equation*}
where $D$ denotes the derivative of the function $\langle \cdot\rangle^r$. Combining the preceding with the estimate
\begin{equation}\label{derivate_ds}
\left\vert\frac{d}{d\xi}\langle \xi\rangle^r\right\vert\le C_r\langle\xi\rangle^{r-1}\,,\quad\forall\,\xi\in\mathbb R\,,
\end{equation}
then gives
\begin{equation}\label{stima_diff_ds}
\vert\langle k\rangle^r-\langle\ell\rangle^r\vert\le\vert k-\ell\vert\int_0^1 \vert D\left(\langle \cdot\rangle^r\right)(\ell+\theta(k-\ell))\vert d\theta\le C_r\vert k-\ell\vert\int_0^1\langle \ell+\theta(k-\ell)\rangle^{r-1}\,d\theta\,.
\end{equation}
Using \eqref{stima_diff_ds}, from \eqref{coeff_fourier_comm} we get
\begin{equation}\label{stima_coeff_comm}
\begin{split}
\vert\widehat{\left[\langle\partial_x\rangle^r\,;\,v\right]f}(k)\vert&\le\sum\limits_{\ell}\left\vert\langle k\rangle^r-\langle\ell\rangle^r\right\vert\vert\widehat{v}(k-\ell)\vert\vert\widehat{f}(\ell)\vert\\
&\le C_r\sum\limits_{\ell}\int_0^1\vert k-\ell\vert\langle \ell+\theta(k-\ell)\rangle^{r-1}\vert\widehat{v}(k-\ell)\vert\vert\widehat{f}(\ell)\vert\,d\theta\,.
\end{split}
\end{equation}
Since the function $\langle\zeta\rangle^{r-1}$ is sub-additive and $0\le\theta\le 1$, we have
\begin{equation}\label{sub-add}
\langle\ell+\theta(k-\ell)\rangle^{r-1}\le C_r\left\{\langle\theta(k-\ell)\rangle^{r-1}+\langle\ell\rangle^{r-1}\right\}\le C_r\left\{\langle k-\ell\rangle^{r-1}+\langle\ell\rangle^{r-1}\right\}\,,
\end{equation}
with positive constant $C_r$ depending only on $r$. Using \eqref{sub-add} to estimate the right-hand side of \eqref{stima_coeff_comm} then gives
\begin{equation}\label{stima_coeff_comm1}
\begin{split}
\vert\widehat{\left[\langle\partial_x\rangle^r\,;\,v\right]f}(k)\vert&
\le C_r\sum\limits_{\ell}\left\{\int_0^1\langle k-\ell\rangle^{r}\vert\widehat{v}(k-\ell)\vert\vert\widehat{f}(\ell)\vert\,d\theta+\int_0^1\vert k-\ell\vert \vert\widehat{v}(k-\ell)\langle\ell\rangle^{r-1}\vert\vert\widehat{f}(\ell)\vert\,d\theta\right\}\\
&\le C^\prime_r\left\{\left(\vert\widehat{\langle\partial_x\rangle^r v}\vert\ast\vert\widehat{f}\vert\right) (k)+\left(\vert\widehat{v_x}\vert\ast\vert\widehat{\langle\partial_x\rangle^{r-1}f}\vert\right)(k)\right\}\,.
\end{split}
\end{equation}
Using Parseval's identity, Young's inequality with $\left\{\vert\widehat{\langle\partial_x\rangle^r v}(k)\vert\right\}\in\ell^2$, $\left\{\vert\widehat{f}(k)\vert\right\}\in\ell^1$, $\left\{\vert\widehat{v_x}(k)\vert\right\}\in\ell^1$,  $\left\{\vert\widehat{\langle\partial_x\rangle^{r-1} f}(k)\vert\right\}\in\ell^2$, \eqref{normaHs} and \eqref{imm_sobolev} with $r=1$ to estimate the $\ell^1-$norms, from \eqref{stima_coeff_comm1} we derive \eqref{stima_comm_ds}.
\end{proof}



As an application of Lemma \ref{commutatore_ds}, let us prove the following result.
\begin{corollary}\label{add_regularity}
Let $\varphi\in C([0,T]; H^{r+2}(\mathbb T))\cap C^1([0,T]; H^{r+1}(\mathbb T))$, for $r>1/2$ and $T>0$, be a solution of the equation \eqref{onde_integro_diff} on $[0,T]$. Then $\varphi\in C^2([0,T]; H^r(\mathbb T))$.
\end{corollary}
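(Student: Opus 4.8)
The plan is to read the needed second time-derivative directly off the equation and show that the resulting expression is a continuous $H^r$-valued function of $t$. Writing \eqref{onde_integro_diff} in the equivalent form \eqref{equ1ter}, every solution satisfies
\[
\varphi_{tt} = (\mu - 2\phi_x)\varphi_{xx} - \mathcal Q[\varphi], \qquad \phi = \mathbb H[\varphi],
\]
in the sense of distributions on $(0,T)\times\mathbb T$. Since $\varphi\in C([0,T];H^{r+2}(\mathbb T))$, the Sobolev continuity of $\mathbb H$ (cf. \eqref{stima_hilbert}) gives $\phi\in C([0,T];H^{r+2}(\mathbb T))$, hence $\phi_x\in C([0,T];H^{r+1}(\mathbb T))$ and $\phi_{xx},\varphi_{xx}\in C([0,T];H^{r}(\mathbb T))$. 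My goal is to prove that the right-hand side above belongs to $C([0,T];H^{r}(\mathbb T))$; once this is established, a routine integration argument upgrades the distributional identity to $\varphi_t\in C^1([0,T];H^{r}(\mathbb T))$, that is $\varphi\in C^2([0,T];H^{r}(\mathbb T))$.

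For the first term I observe that, since $r>1/2$, the space $H^r(\mathbb T)$ is a Banach algebra (equivalently, one may invoke the product estimate of Lemma \ref{lemma_prod_alg}); thus $(\mu-2\phi_x)\varphi_{xx}$ is controlled in $H^r$ by $\Vert\mu-2\phi_x\Vert_{H^r}\Vert\varphi_{xx}\Vert_{H^r}$. Applying the same bilinear bound to the increment between two times, written as $\mu\,(\varphi_{xx}(t)-\varphi_{xx}(s))-2\phi_x(t)(\varphi_{xx}(t)-\varphi_{xx}(s))-2(\phi_x(t)-\phi_x(s))\varphi_{xx}(s)$, and using $\phi_x\in C([0,T];H^{r+1}(\mathbb T))\hookrightarrow C([0,T];H^{r}(\mathbb T))$ together with $\varphi_{xx}\in C([0,T];H^r(\mathbb T))$, yields continuity of $t\mapsto(\mu-2\phi_x)\varphi_{xx}$ into $H^r$. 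For the nonlocal term $\mathcal Q[\varphi]=-3[\mathbb H;\phi_x]\phi_{xx}-[\mathbb H;\phi]\phi_{xxx}$, the $H^r$-bound itself is supplied by Lemma \ref{lemma_stima_quadr}; to promote this to continuity I exploit the bilinear structure exactly as in the uniqueness proof of Section \ref{sec_uniqueness}, where $\mathcal Q[\varphi]-\mathcal Q[\theta]$ is rewritten in terms of $v=\varphi-\theta$. Taking $\theta=\varphi(s)$ and $\varphi=\varphi(t)$ and applying the commutator estimate \eqref{stima_comm_p} of Lemma \ref{lemma_comm_ale_2} (with $\sigma=r$ and $p=1,2$) to the increments $\phi(t)-\phi(s)$ and $\phi_x(t)-\phi_x(s)$, one bounds $\Vert\mathcal Q[\varphi(t)]-\mathcal Q[\varphi(s)]\Vert_{H^r}$ by a constant times $\Vert\varphi(t)-\varphi(s)\Vert_{H^{r+2}}$; since $\varphi\in C([0,T];H^{r+2}(\mathbb T))$ this tends to $0$ as $s\to t$.

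Having shown that $g:=(\mu-2\phi_x)\varphi_{xx}-\mathcal Q[\varphi]\in C([0,T];H^{r}(\mathbb T))$, it remains to pass from the distributional identity $\varphi_{tt}=g$ to the strong conclusion. As $\varphi_t\in C([0,T];H^{r+1}(\mathbb T))\hookrightarrow C([0,T];H^{r}(\mathbb T))$ has distributional time derivative equal to $g\in C([0,T];H^{r}(\mathbb T))$, the fundamental theorem of calculus in the Banach space $H^r(\mathbb T)$ gives $\varphi_t(t)=\varphi_t(0)+\int_0^t g(\tau)\,d\tau$ for all $t\in[0,T]$; the continuity of $g$ then shows that $\varphi_t$ is continuously differentiable as an $H^r$-valued function with $\partial_t\varphi_t=g$, which is precisely $\varphi\in C^2([0,T];H^{r}(\mathbb T))$. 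The main obstacle is the middle step: establishing \emph{continuity}, not merely boundedness, of the quadratic nonlocal term $t\mapsto\mathcal Q[\varphi(t)]$ in $H^r$. Because the commutator $[\mathbb H;\phi]\phi_{xxx}$ carries three spatial derivatives, one cannot control its time-increment by a top-order norm of $\varphi(t)-\varphi(s)$ alone; it is exactly the cancellation encoded in the commutator estimates of Lemma \ref{lemma_comm_ale_2} that keeps the bound at the level of $H^r$ and allows the assumed $H^{r+2}$-continuity of $\varphi$ to close the argument.
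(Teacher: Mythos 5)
Your proposal is correct and follows essentially the same route as the paper's proof of Corollary \ref{add_regularity}: solve the equation for $\varphi_{tt}$, handle $(\mu-2\phi_x)\varphi_{xx}$ via Lemma \ref{lemma_prod_alg} and the Sobolev continuity of $\mathbb H$, and obtain continuity of the third-order commutator through Lemma \ref{lemma_comm_ale_2} with exactly the bilinear increment splitting $\left[\mathbb H\,;\,\phi(t)-\phi(s)\right]\phi_{xxx}(t)+\left[\mathbb H\,;\,\phi(s)\right](\phi_{xxx}(t)-\phi_{xxx}(s))$ that the paper uses. The only cosmetic differences are that the paper expands the lower-order commutator $\left[\mathbb H\,;\,\phi_x\right]\phi_{xx}$ explicitly and treats it by product estimates while you run all of $\mathcal Q[\varphi(t)]-\mathcal Q[\varphi(s)]$ through the four-term decomposition of Section \ref{sec_uniqueness}, and that you spell out the final fundamental-theorem-of-calculus step that the paper leaves implicit; both are sound.
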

\begin{proof}
Solving the equation \eqref{onde_integro_diff}, written in the equivalent form \eqref{equ1ter}, with respect to $\varphi_{tt}$, we get
\begin{equation}\label{phitt}
\varphi_{tt}=(\mu-2\phi_x)\varphi_{xx}-\mathcal Q[\varphi]\,,
\end{equation}
where
\begin{equation*}
\mathcal Q[\varphi]=-3\left[\mathbb H\,;\,\phi_x\right]\phi_{xx}-\left[\mathbb H\,;\,\phi\right]\phi_{xxx}\,.
\end{equation*}
From the algebra property for $H^r(\mathbb T)$ with $r>1/2$ (see Lemma \ref{lemma_prod_alg}), the Sobolev continuity of $\mathbb H$ and $\varphi\in C([0,T]; H^{r+2}(\mathbb T))$  we derive that
\begin{equation*}
\phi_x\in C([0,T]; H^{r+1}(\mathbb T))\,,\qquad \varphi_{xx}\in C([0,T]; H^r(\mathbb T))\quad\Rightarrow\quad (\mu-2\phi_x)\varphi_{xx}\in C([0,T]; H^r(\mathbb T))\,.
\end{equation*}
Concerning the first commutator in the right-hand side of the expression for $\mathcal Q[\varphi]$, we write it explicitly as $-3\left[\mathbb H\,;\,\phi_x\right]\phi_{xx}=-3\mathbb H[\phi_x\phi_{xx}]+3\phi_x\mathbb H[\phi_{xx}]$ and use the same arguments as to obtain the regularity of $(\mu-2\phi_x)\varphi_{xx}$. As for the second commutator, we cannot argue as for the  previous one; instead we need to exploit the commutator structure by applying Lemma \ref{lemma_comm_ale_2} with $p=2$, $v=\phi$ and $f=\phi_x$; then we get that for each $t\in [0,T]$, $\left[\mathbb H\,;\,\phi(t)\right]\phi_{xxx}(t)\in H^r(\mathbb T)$ and satisfies the estimate
\begin{equation*}
\Vert \left[\mathbb H\,;\,\phi (t)\right]\phi_{xxx}(t)\Vert_{H^r(\mathbb T)}\le C\Vert\phi_{xx}(t)\Vert_{H^r(\mathbb T)}\Vert\phi_x(t)\Vert_{H^1(\mathbb T)}\,,\quad\forall\,t\in[0,T]\,,
\end{equation*}
with constant $C$ depending only on $r$. The time continuity of $\left[\mathbb H\,;\,\phi\right]\phi_{xxx}$, as an $H^r(\mathbb T)-$valued function, comes from the estimate
\begin{equation*}
\begin{split}
\Vert &\left[\mathbb H\,;\,\phi (t_1)\right]\phi_{xxx}(t_1)-\left[\mathbb H\,;\,\phi (t_2)\right]\phi_{xxx}(t_2)\Vert_{H^r(\mathbb T)}\\
&=\Vert \left[\mathbb H\,;\,\phi (t_1)-\phi(t_2)\right]\phi_{xxx}(t_1)+\left[\mathbb H\,;\,\phi (t_2)\right](\phi_{xxx}(t_1)-\phi_{xxx}(t_2)\Vert_{H^r(\mathbb T)}\\
&\le C\left\{\Vert\phi_{xx}(t_1)-\phi_{xx}(t_2)\Vert_{H^r(\mathbb T)}\Vert\phi_x(t_1)\Vert_{H^1(\mathbb T)}+\Vert\phi_{xx}(t_2)\Vert_{H^r(\mathbb T)}\Vert\phi_x(t_1)-\phi_x(t_2)\Vert_{H^1(\mathbb T)}\right\}\,,\quad\forall\,t_1, t_2\in[0,T]\,,
\end{split}
\end{equation*}
which is again a consequence of Lemma \ref{lemma_comm_ale_2}, and that $\phi_{xx}\in C([0,T]; H^r(\mathbb T))$, $\phi_x\in C([0,T]; H^{r+1}(\mathbb T))\hookrightarrow C([0,T]; H^{1}(\mathbb T))$. In view of \eqref{phitt} we then find that $\varphi_{tt}\in C([0,T]; H^r(\mathbb T))$ which completes the proof.
\end{proof}

In the end of this section, we recall the following algebra property of Sobolev spaces.
\begin{lemma}\label{lemma_prod_alg}
For all real $\sigma>1/2$ and $m\ge 0$, the set inclusion $H^\sigma(\mathbb T)\cdot H^m(\mathbb T)\subset H^m(\mathbb T)$ holds with continuous imbedding. In particular, for $m=\sigma>1/2$ the space $H^\sigma(\mathbb T)$ is an algebra for the point-wise product of functions.
\end{lemma}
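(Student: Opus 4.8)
The plan is to argue entirely on the Fourier side, using the same machinery already exploited in Lemma \ref{commutatore_ds}: the description \eqref{normaHs} of the $H^m$-norm through Fourier coefficients, the convolution identity \eqref{convoluzione}, the sub-additivity (Peetre-type) inequality for the weight $\langle\cdot\rangle^{m}$, and Young's inequality for discrete convolutions combined with the embedding \eqref{imm_sobolev} of $H^\tau$ ($\tau>1/2$) into $\ell^1$. Throughout I work in the relevant regime $0\le m\le\sigma$, the algebra case being the endpoint $m=\sigma$.

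First I would write, for $f\in H^\sigma(\mathbb T)$ and $g\in H^m(\mathbb T)$,
\[
\|fg\|_{H^m(\mathbb T)}^2 = 2\pi\sum_{k}\langle k\rangle^{2m}\Big|\sum_{\ell}\widehat f(k-\ell)\,\widehat g(\ell)\Big|^2 ,
\]
using \eqref{normaHs} together with \eqref{convoluzione}. Then, exactly as in \eqref{sub-add}, I would invoke $\langle k\rangle^{m}=\langle (k-\ell)+\ell\rangle^{m}\le C_m\big(\langle k-\ell\rangle^{m}+\langle\ell\rangle^{m}\big)$, valid for $m\ge 0$, to split the weighted coefficient into two pieces in which the full weight $\langle\cdot\rangle^m$ falls respectively on $f$ and on $g$:
\[
\langle k\rangle^{m}\,\big|\widehat{fg}(k)\big|\le C_m\Big[\big(|\widehat{\langle\partial_x\rangle^{m}f}|\ast|\widehat g|\big)(k)+\big(|\widehat f|\ast|\widehat{\langle\partial_x\rangle^{m}g}|\big)(k)\Big].
\]

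The second convolution is the benign one: by Young's inequality with the $\ell^1$-norm on the $f$-factor and the $\ell^2$-norm on the $g$-factor, followed by \eqref{imm_sobolev} (applicable since $\sigma>1/2$) and \eqref{normaHs}, it is bounded in $\ell^2_k$ by $c_\sigma\|f\|_{H^\sigma}\|g\|_{H^m}$. For the first convolution I would again use Young, placing the $\ell^2$-norm on $\{\langle k\rangle^{m}\widehat f(k)\}$ — which is controlled by $\|f\|_{H^\sigma}$ precisely because $m\le\sigma$ — and the $\ell^1$-norm on $\{\widehat g(k)\}$. Squaring, summing the two contributions via Parseval \eqref{parseval_1}, and taking square roots then yields $\|fg\|_{H^m}\le C\,\|f\|_{H^\sigma}\|g\|_{H^m}$, i.e. the continuous inclusion; the algebra statement is the special case $m=\sigma>1/2$, where both factors lie in $H^\sigma$ and so both have $\ell^1$-summable coefficients, rendering the two terms perfectly symmetric.

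The step I expect to require the most care is the allocation of the $\ell^1$- and $\ell^2$-norms in the first convolution term, since \eqref{imm_sobolev} is only available at Sobolev order strictly above $1/2$: one must put the $\ell^1$-factor on a function that genuinely sits in such a space. In the regime $m>1/2$ this is $g$ (as above); when $0\le m\le 1/2$ one instead exploits $m<\sigma$, writing $\langle k\rangle^{m}=\langle k\rangle^{m-\sigma}\langle k\rangle^{\sigma}$ and estimating $\|\widehat{\langle\partial_x\rangle^{m}f}\|_{\ell^1}\le\|\langle\cdot\rangle^{m-\sigma}\|_{\ell^2}\|f\|_{H^\sigma}$, the weight being square-summable when $\sigma-m>1/2$. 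The only genuinely delicate point is the borderline sliver $1/2<\sigma\le1$, $\sigma-\tfrac12\le m\le\tfrac12$, where neither allocation is automatic and the two-term splitting must be refined by localizing the sum according to whether the frequency of $f$ or that of $g$ dominates (splitting $\{|k-\ell|\le|\ell|\}$ from its complement); everything else is a routine use of Young's inequality and \eqref{imm_sobolev}.
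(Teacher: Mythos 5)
The paper offers no proof of Lemma \ref{lemma_prod_alg} at all: it is ``recalled'' as a standard fact, so there is no in-paper argument to compare against. Your Fourier-side proof is nevertheless exactly in the style of the paper's own Appendix \ref{stima_commutatore} (the sub-additivity \eqref{sub-add}, discrete Young, and the $\ell^1$-imbedding \eqref{imm_sobolev} are the same tools driving Lemma \ref{commutatore_ds} and Lemma \ref{lemma_stima_quadr}), and its main line is correct. One point you treat as a convention deserves emphasis: the restriction $0\le m\le\sigma$ is not just ``the relevant regime'' but necessary, since as literally stated (arbitrary $m\ge 0$) the lemma is false --- take $g\equiv 1\in H^m(\mathbb T)$ for every $m$ and $f\in H^\sigma(\mathbb T)\setminus H^m(\mathbb T)$ with $m>\sigma$. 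All of the paper's applications (e.g.\ $(\sigma,m)=(2,1)$, $(1,0)$, $(r+1,r)$ with $r>1/2$) satisfy $m\le\sigma$ and in fact fall into your two ``easy'' regimes $m>1/2$ or $\sigma-m>1/2$, so the borderline sliver is never invoked in the paper.

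In that sliver ($m\le 1/2$ and $\sigma-m\le 1/2$) your sketch identifies the right localization but, as worded (``localize, then routine Young plus \eqref{imm_sobolev}''), it does not close: on the $f$-dominant region $\{\vert k-\ell\vert>\vert\ell\vert\}$, after $\langle k\rangle^m\le C\langle k-\ell\rangle^m$ you face the convolution of $\{\langle j\rangle^{m-\sigma}F(j)\}$ against $\{\widehat g(\ell)\}$, where $F:=\langle\cdot\rangle^{\sigma}\vert\widehat f\vert\in\ell^2$, and in the sliver neither factor lies in $\ell^1$, so plain Young fails there for the same reason as before. The missing step is to use the dominance to transfer the negative weight across the convolution: on that region $\langle k-\ell\rangle^{m-\sigma}\le\langle\ell\rangle^{m-\sigma}$ (the exponent being negative), so the term is bounded by $(F\ast h)(k)$ with $h(\ell):=\langle\ell\rangle^{m-\sigma}\vert\widehat g(\ell)\vert$, and $\Vert h\Vert_{\ell^1}\le\Vert\langle\cdot\rangle^{-\sigma}\Vert_{\ell^2}\,\Vert\langle\cdot\rangle^{m}\widehat g\Vert_{\ell^2}\le c_\sigma\Vert g\Vert_{H^m(\mathbb T)}$ by Cauchy--Schwarz --- this is precisely where $\sigma>1/2$ enters, and it is the same trick you already used for $\Vert\widehat{\langle\partial_x\rangle^{m}f}\Vert_{\ell^1}$; Young $\ell^2\ast\ell^1\to\ell^2$ then finishes, while the $g$-dominant region is covered verbatim by your second convolution estimate. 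Alternatively you can bypass the sliver entirely: your argument already yields the endpoints $m=0$ (since $\sigma>1/2$) and $m=\sigma$ (the algebra case), and for fixed $f\in H^\sigma(\mathbb T)$ the map $g\mapsto fg$ is linear, so complex interpolation $\left[H^\sigma(\mathbb T),L^2(\mathbb T)\right]_{\theta}=H^{(1-\theta)\sigma}(\mathbb T)$ --- the intermediate spaces of Appendix \ref{Lions-Magenes_sect} --- gives all $0\le m\le\sigma$ at once with the bound $C\Vert f\Vert_{H^\sigma(\mathbb T)}\Vert g\Vert_{H^m(\mathbb T)}$.
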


\section{The analysis of the quadratic term $\mathcal Q[\varphi]$}\label{sec_quadratic}
This section is devoted to prove the following result.
\begin{lemma}\label{lemma_stima_quadr}
There exists a positive constant $C$ such that for every real $r\ge 0$ and for all $\varphi\in H^3(\mathbb T)\cap H^{r+1}(\mathbb T)$
\begin{equation}\label{stima_quadr}
\Vert\mathcal Q[\varphi]\Vert_{H^r(\mathbb T)}\le C\Vert\varphi_x\Vert_{H^2(\mathbb T)}\Vert\varphi_x\Vert_{H^r(\mathbb T)}\,.
\end{equation}
\end{lemma}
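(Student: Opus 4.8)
The plan is to pass to the frequency side and compute the Fourier coefficients of $\mathcal Q[\varphi]$ explicitly, exploiting the commutator structure of the two terms in \eqref{termine_nl}. Recall $\phi=\mathbb H[\varphi]$ and that, by the identity \eqref{fourier1} established in the proof of Lemma \ref{lemma_comm_ale_2}, one has $\widehat{[\mathbb H\,;\,v]u}(k)=-\sum_\ell i(\mathrm{sgn}\,k-\mathrm{sgn}\,\ell)\widehat v(k-\ell)\widehat u(\ell)$ for smooth $v,u$. Applying this with $(v,u)=(\phi_x,\phi_{xx})$ and with $(v,u)=(\phi,\phi_{xxx})$, inserting $\widehat{\phi_x}(m)=im\widehat\phi(m)$, $\widehat{\phi_{xx}}(\ell)=-\ell^2\widehat\phi(\ell)$, $\widehat{\phi_{xxx}}(\ell)=-i\ell^3\widehat\phi(\ell)$, and combining with the coefficients $-3$ and $-1$ of \eqref{termine_nl}, I expect the two polynomial weights $3(k-\ell)\ell^2$ and $\ell^3$ to coalesce into $\ell^2(3k-2\ell)$, giving
\[
\widehat{\mathcal Q[\varphi]}(k)=\sum_\ell(\mathrm{sgn}\,k-\mathrm{sgn}\,\ell)\,\ell^2(3k-2\ell)\,\widehat\phi(k-\ell)\widehat\phi(\ell)\,.
\]

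The decisive structural observation, and the crux of the whole estimate, is the factor $\mathrm{sgn}\,k-\mathrm{sgn}\,\ell$: it vanishes whenever $k$ and $\ell$ have the same sign, so, after discarding the terms killed by $\widehat\phi(0)=0$, the sum effectively ranges only over pairs with $k$ and $\ell$ of opposite sign (see the case distinction \eqref{casi_2}). On this support one has the elementary inequalities $|k|\le|k-\ell|$ and $|\ell|\le|k-\ell|$, so the frequency $m:=k-\ell$ always dominates. This is exactly what prevents the apparent third-order weight from producing a loss of derivatives: the ``worst'' factor $|k-\ell|$ can always be attached to a single copy of $\phi$, leaving only a second-order weight on the other copy.

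Concretely, on this support I would bound the kernel by $|(\mathrm{sgn}\,k-\mathrm{sgn}\,\ell)\ell^2(3k-2\ell)|\le 10\,\ell^2|k-\ell|$, using $|3k-2\ell|\le 3|k|+2|\ell|\le 5|k-\ell|$, and use $\langle k\rangle^r\le\langle k-\ell\rangle^r$. Writing $m=k-\ell$, this yields the pointwise convolution bound $\langle k\rangle^r|\widehat{\mathcal Q[\varphi]}(k)|\le 10\,(a\ast b)(k)$ with $a(m):=\langle m\rangle^r|m|\,|\widehat\phi(m)|$ and $b(\ell):=\ell^2|\widehat\phi(\ell)|$. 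Then Young's inequality $\Vert a\ast b\Vert_{\ell^2}\le\Vert a\Vert_{\ell^2}\Vert b\Vert_{\ell^1}$, together with \eqref{normaHs}, gives $\Vert a\Vert_{\ell^2}=\tfrac1{\sqrt{2\pi}}\Vert\phi_x\Vert_{H^r(\mathbb T)}$ (since $|m|\,|\widehat\phi(m)|=|\widehat{\phi_x}(m)|$), while the embedding \eqref{imm_sobolev} with $\tau=1$ gives $\Vert b\Vert_{\ell^1}=\Vert\{\widehat{\phi_{xx}}(\ell)\}\Vert_{\ell^1}\le c_1\Vert\phi_{xx}\Vert_{H^1(\mathbb T)}\le c_1\Vert\phi_x\Vert_{H^2(\mathbb T)}$. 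Finally, the Sobolev continuity \eqref{stima_hilbert} of $\mathbb H$ (recall $\phi_x=\mathbb H[\varphi_x]$) replaces $\Vert\phi_x\Vert_{H^r(\mathbb T)}$ and $\Vert\phi_x\Vert_{H^2(\mathbb T)}$ by $\Vert\varphi_x\Vert_{H^r(\mathbb T)}$ and $\Vert\varphi_x\Vert_{H^2(\mathbb T)}$, which is exactly \eqref{stima_quadr}; the hypothesis $\varphi\in H^3(\mathbb T)\cap H^{r+1}(\mathbb T)$ is precisely what makes both norms finite.

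I expect the main obstacle to be purely bookkeeping rather than conceptual: one must carry the three cases $k>0$, $k=0$, $k<0$ of \eqref{casi_2} through the computation and check that the opposite-sign support together with the inequalities $|k|,|\ell|\le|k-\ell|$ holds uniformly in all of them, including the degenerate case $k=0$ (where $m=-\ell$ and $|k|=0\le|m|$ trivially). Once the explicit kernel and its sign support are secured, the remaining convolution/Young estimate is entirely routine.
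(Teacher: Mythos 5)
Your proposal is correct, and it reaches the estimate \eqref{stima_quadr} by a genuinely leaner route than the paper, while sharing the same starting point and the same key cancellation. Both arguments begin identically: the exact Fourier formula via \eqref{fourier1}, the coalescence of the two commutator weights into $\ell^2(3k-2\ell)$, and the recognition that the factor ${\rm sgn}\,k-{\rm sgn}\,\ell$ restricts the sum to opposite-sign frequencies --- this support restriction, inherited from the commutator structure, is precisely the ``cancelation effect'' the paper invokes, and without it the weight $\ell^2(3k-2\ell)\sim k^3$ near the diagonal would indeed force a loss of one derivative. Where you diverge: the paper symmetrizes the kernel to $\widetilde\Lambda$ in \eqref{nucleo_simm}, decomposes the frequency plane into the four regions $\mathbb F_I,\dots,\mathbb F_{IV}$, and proves the symmetry and reality identities of Lemma \ref{lemma_simmetrie} and Corollary \ref{cor_Qphi} so as to reduce everything to the single sum $I_1(k)$ over $\ell>k>0$ (formula \eqref{coeff_Qphi2_esplicito}); there the kernel reads $(k-\ell)^2(k+2\ell)$, the bound $k+2\ell<3\ell$ is used, and the weight $\langle k\rangle^r$ is attached to the $\widehat\varphi(\ell)$-factor via $\langle k\rangle\le\langle\ell\rangle$, yielding $\langle k\rangle^r|\widehat{\mathcal Q[\varphi]}(k)|\le 6\bigl(|\widehat{\varphi_{xx}}|\ast|\widehat{\psi_x}|\bigr)(k)$ with $\psi=\langle\partial_x\rangle^r\varphi$, followed by the same Young-plus-\eqref{imm_sobolev} step you use. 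You instead bound the \emph{unsymmetrized} kernel directly on the opposite-sign support, where $|k|\le|k-\ell|$ and $|\ell|\le|k-\ell|$ give $|3k-2\ell|\le 5|k-\ell|$ and $\langle k\rangle^r\le\langle k-\ell\rangle^r$; note that your allocation is the mirror image of the paper's (you put the $H^r$-weight and one derivative on the $\widehat\phi(k-\ell)$-factor and the second-order weight, in $\ell^1$, on the $\widehat\phi(\ell)$-factor), and both allocations are legitimate and land on $\|\varphi_x\|_{H^2}\|\varphi_x\|_{H^r}$. Your treatment of the degenerate case $k=0$ is consistent with the paper's explicit zero mode \eqref{coeff_Q0}, and working with $\widehat\phi$ instead of $\widehat\varphi$ is harmless since $|\widehat\phi(m)|\le|\widehat\varphi(m)|$ and \eqref{stima_hilbert} closes the gap at the end. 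What each approach buys: yours dispenses with the entire symmetrization apparatus (at the negligible cost of a constant $10$ instead of $6$); the paper's machinery yields extra structural information --- the closed formulas \eqref{coeff_Qphi2_esplicito}, the reality condition \eqref{reality}, and the discussion of why \eqref{coeff_Qphi1} fails at $k=0$ (Remark \ref{rmk:5}) --- which is illuminating but not needed for the estimate itself.
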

In order to make the estimate of Lemma \ref{lemma_stima_quadr}, it is first convenient to find out the explicit form of the Fourier coefficients of $\mathcal Q[\varphi]$. Let us recall that (cf. \eqref{termine_nl})
\begin{equation*}
\mathcal Q\left[\varphi\right]:=-3\left[\mathbb H\,;\,\phi_x\right]\phi_{xx}-\left[\mathbb H\,;\,\phi\right]\phi_{xxx}\,;
\end{equation*}
hence for all $k\in\mathbb Z$, we have
\begin{equation}\label{fourier_quadr1}
\widehat{\mathcal Q\left[\varphi\right]}(k)=-3\left(\left[\mathbb H\,;\,\phi_x\right]\phi_{xx}\right)^{\wedge}(k)-\left(\left[\mathbb H\,;\,\phi\right]\phi_{xxx}\right)^{\wedge}(k)\,.
\end{equation}
Applying \eqref{fourier1} in the right-hand side of \eqref{fourier_quadr1} with $v=\phi_x$, $u=\phi_{xx}$ in the first commutator and $v=\phi$ and $u=\phi_{xxx}$ in the second commutator gives
\begin{equation}\label{fourier_quadr2}
\begin{split}
\widehat{\mathcal Q\left[\varphi\right]}&(k)=3\sum\limits_{\ell}i({\rm sgn}\,k-{\rm sgn}\,\ell)\widehat{\phi_x}(k-\ell)\widehat{\phi_{xx}}(\ell)
+\sum\limits_{\ell}i({\rm sgn}\,k-{\rm sgn}\,\ell)\widehat{\phi}(k-\ell)\widehat{\phi_{xxx}}(\ell)\\
&=3\sum\limits_{\ell}i({\rm sgn}\,k-{\rm sgn}\,\ell)i(k-\ell)\widehat{\phi}(k-\ell)(i\ell)^2\widehat{\phi}(\ell)+\sum\limits_{\ell}i({\rm sgn}\,k-{\rm sgn}\,\ell)\widehat{\phi}(k-\ell)(i\ell)^3\widehat{\phi}(\ell)\\
&=\sum\limits_{\ell}\left\{3({\rm sgn}\,k-{\rm sgn}\,\ell)(k-\ell)\ell^2+({\rm sgn}\,k-{\rm sgn}\,\ell)\ell^3\right\}\widehat{\phi}(k-\ell)\widehat{\phi}(\ell)\\
&=-\sum\limits_{\ell}({\rm sgn}\,k-{\rm sgn}\,\ell)\,\ell^2\,(3k-2\ell)\,{\rm sgn}(k-\ell)\,{\rm sgn}\ell\,\widehat{\varphi}(k-\ell)\widehat{\varphi}(\ell)\\
&=\sum\limits_{\ell}\Lambda(k-\ell,\ell)\widehat{\varphi}(k-\ell)\widehat{\varphi}(\ell)\,,
\end{split}
\end{equation}
where
\begin{equation}\label{nucleo}
\Lambda(m,\ell):=-({\rm sgn}\,(m+\ell)-{\rm sgn}\,\ell)\,\ell^2\,(3m+\ell)\,{\rm sgn}m\,{\rm sgn}\ell\,,\quad\forall\,(m,\ell)\in\mathbb Z\times\mathbb Z\,.
\end{equation}
We notice that the ``kernel'' $\Lambda$ involved in the last expression above is not symmetric with respect to its arguments $m$ and $\ell$. In order to exploit some ``cancelation effects'' it is convenient to pass from $\Lambda$ to its symmetric counterpart
\begin{equation}\label{nucleo_simm}
\begin{split}
\widetilde{\Lambda}(m,\ell)&:=\frac12\left\{\Lambda(m,\ell)+\Lambda(\ell,m)\right\}\\
&=-\frac1{2}\left\{({\rm sgn}(m+\ell)-{\rm sgn}\ell)\,\ell^2\,(3m+\ell)+({\rm sgn}(m+\ell)-{\rm sgn}m)\,m^2\,(3\ell+m)\right\}{\rm sgn}m\,{\rm sgn}\ell\,.
\end{split}
\end{equation}
One can easily check that the $\Lambda$ can be replaced by $\widetilde{\Lambda}$ in the representation formula of the Fourier coefficients of $\mathcal Q[\varphi]$, so that
\begin{equation}\label{fourier_quadr3}
\widehat{\mathcal Q\left[\varphi\right]}(k)=\sum\limits_{\ell}\widetilde{\Lambda}(k-\ell,\ell)\widehat{\varphi}(k-\ell)\widehat{\varphi}(\ell)\,.
\end{equation}

Besides the symmetry some additional properties of the kernel $\widetilde{\Lambda}$ will be useful in the sequel of the analysis:
\begin{itemize}
\item[(i)] $\widetilde{\Lambda}(m,\ell)=0$ as long as $m\ell\ge 0$;
\item[(ii)] the real-valued kernel $\widetilde{\Lambda}$ satisfies the {\it reality condition} (see \cite{hunter}) that is
\begin{equation}\label{reality}
\widetilde{\Lambda}(-m,-\ell)=\widetilde{\Lambda}(m,\ell)\,,\quad\forall\,(m,\ell)\in\mathbb Z\times\mathbb Z\,.
\end{equation}
\end{itemize}
In view of property (i) above, for every given $k\in\mathbb Z$ the expression \eqref{fourier_quadr3} of the $k-$th Fourier coefficient of $\mathcal Q\left[\varphi\right]$ reduces to
\begin{equation}\label{fourier_quadr4}
\widehat{\mathcal Q\left[\varphi\right]}(k)=\sum\limits_{\ell\in\mathcal A_k}\widetilde{\Lambda}(k-\ell,\ell)\widehat{\varphi}(k-\ell)\widehat{\varphi}(\ell)\,,
\end{equation}
where
\begin{equation}\label{Ak}
\mathcal A_k:=\{\ell\in\mathbb Z\,:\,\,(k-\ell)\ell<0\}\,.
\end{equation}
In order to exploit the properties of $\widetilde{\Lambda}$ above, it is convenient to decompose the set of frequencies $\{(m,\ell)\in\mathbb Z\times\mathbb Z:\,\,m\ell<0\}$, where $\widetilde{\Lambda}$ is not zero, into four pairwise disjoint sub-regions below:
\begin{eqnarray*}
\mathbb F_I:=\{(m,\ell)\,:\,\,m+\ell>0\,,\,\,m<0\}\,;\label{F1}\\
\mathbb F_{II}:=\{(m,\ell)\,:\,\,m+\ell\le 0\,,\,\,m<0\}\,;\label{F2}\\
\mathbb F_{III}:=\{(m,\ell)\,:\,\,m+\ell\le 0\,,\,\,m>0\}\,;\label{F3}\\
\mathbb F_{IV}:=\{(m,\ell)\,:\,\,m+\ell>0\,,\,\,m>0\}\,.\label{F4}
\end{eqnarray*}

\begin{figure}[htbp]
{\centering
\begin{tikzpicture}[scale=.38]
\draw[->] (-5,0) -- (5.5,0) node[below=.0cm]{$m$};
\draw[->] (0,-4) -- (0,5) node[left]{$\ell$};
\draw[-] (-4,4) -- (4,-4) node[right]{$m+\ell=0$};
\draw (3,2.2) node{$\tilde\Lambda=0$};
\draw (-3,-2.2) node{$\tilde\Lambda=0$};
\draw (-1.5,3.5) node{$\mathbb F_{I}$};
\draw (-3.5,1.5) node{$\mathbb F_{I\!I}$};
\draw (1.5,-3.5) node{$\mathbb F_{I\!I\!I}$};
\draw (3.5,-1.5) node{$\mathbb F_{I\!V}$};
\end{tikzpicture}
}
\end{figure}


According to the above decomposition, for every fixed $k\in\mathbb Z$ one can split the index set $\mathcal A_k$ involved in the sum in the right-hand side of \eqref{fourier_quadr4} into the four pairwise disjoint subsets
\begin{eqnarray*}
\mathcal A^I_k:=\{\ell\,:\,\,(k-\ell,\ell)\in\mathbb F_I\}\,;\label{AI}\\
\mathcal A^{II}_k:=\{\ell\,:\,\,(k-\ell,\ell)\in\mathbb F_{II}\}\,;\label{AII}\\
\mathcal A^{III}_k:=\{\ell\,:\,\,(k-\ell,\ell)\in\mathbb F_{III}\}\,;\label{AIII}\\
\mathcal A^{IV}_k:=\{\ell\,:\,\,(k-\ell,\ell)\in\mathbb F_{IV}\}\;\label{AIV}
\end{eqnarray*}
correspondingly the sum in the right-hand side of \eqref{fourier_quadr4} can be split into four contributions:
\begin{equation}\label{fourier_quadr5}
\begin{split}
\widehat{\mathcal Q\left[\varphi\right]}(k)&=\sum\limits_{\ell\in\mathcal A^{I}_k}\widetilde{\Lambda}(k-\ell,\ell)\widehat{\varphi}(k-\ell)\widehat{\varphi}(\ell)+\sum\limits_{\ell\in\mathcal A^{II}_k}\widetilde{\Lambda}(k-\ell,\ell)\widehat{\varphi}(k-\ell)\widehat{\varphi}(\ell)\\
&+\sum\limits_{\ell\in\mathcal A^{III}_k}\widetilde{\Lambda}(k-\ell,\ell)\widehat{\varphi}(k-\ell)\widehat{\varphi}(\ell)+\sum\limits_{\ell\in\mathcal A^{IV}_k}\widetilde{\Lambda}(k-\ell,\ell)\widehat{\varphi}(k-\ell)\widehat{\varphi}(\ell)\\
&=I_1(k)+I_2(k)+I_3(k)+I_4(k)\,.
\end{split}
\end{equation}
It is worth noticing that for any given $k\in\mathbb Z$ some of the sets $\mathcal A^{I}_k,\dots,\mathcal A^{IV}_k$ above may become empty. For instance from \eqref{F1} one derives
\begin{equation}\label{A1_esplicito}
\ell\in\mathcal A^I_{k}\quad\Leftrightarrow\quad(k-\ell,\ell)\in\mathbb F_I\quad\Leftrightarrow\quad \ell>k>0\,.
\end{equation}
In particular the preceding implies that $\mathcal A^{I}_k=\emptyset$ for $k\le 0$. Arguing similarly on the other sets in \eqref{AII}--\eqref{AIV}, one has
\begin{eqnarray*}
\mathcal A^{I}_k=\mathcal A^{IV}_k=\emptyset\quad\mbox{if}\,\,k\le 0;\label{identita_A1}\\
\mathcal A^{II}_k=\mathcal A^{III}_k=\emptyset\quad\mbox{if}\,\,k>0\,.\label{identita_A2}
\end{eqnarray*}
Of course, when an index set $\mathcal A^{I}_k,\dots,\mathcal A^{IV}_k$ is empty it is understood that the corresponding term $I_1(k),\dots, I_4(k)$ in the right-hand side of \eqref{fourier_quadr5} is zero; then one has
\begin{itemize}
\item[1.] If $k\le 0$ then $I_1(k)=I_4(k)=0$, thus
\begin{equation}\label{decomp1}
\widehat{\mathcal Q\left[\varphi\right]}(k)=\sum\limits_{\ell\in\mathcal A^{II}_k}\widetilde{\Lambda}(k-\ell,\ell)\widehat{\varphi}(k-\ell)\widehat{\varphi}(\ell)+\sum\limits_{\ell\in\mathcal A^{III}_k}\widetilde{\Lambda}(k-\ell,\ell)\widehat{\varphi}(k-\ell)\widehat{\varphi}(\ell)=I_2(k)+I_3(k)\,.
\end{equation}
\item[2.] If $k>0$ then $I_2(k)=I_3(k)=0$ thus
\begin{equation}\label{decomp2}
\widehat{\mathcal Q\left[\varphi\right]}(k)=\sum\limits_{\ell\in\mathcal A^{I}_k}\widetilde{\Lambda}(k-\ell,\ell)\widehat{\varphi}(k-\ell)\widehat{\varphi}(\ell)+\sum\limits_{\ell\in\mathcal A^{IV}_k}\widetilde{\Lambda}(k-\ell,\ell)\widehat{\varphi}(k-\ell)\widehat{\varphi}(\ell)=I_1(k)+I_4(k)\,.
\end{equation}
\end{itemize}
In view of the symmetry of $\widetilde{\Lambda}$, that is
\begin{equation}\label{simmetria}
\widetilde{\Lambda}(m,\ell)=\widetilde{\Lambda}(\ell,m)\,,\quad\forall\,(m,\ell)\in\mathbb Z\times\mathbb Z\,,
\end{equation}
and the reality condition \eqref{reality}, we may further reduce the expression of the Fourier coefficients $\widetilde{\mathcal Q[\varphi]}(k)$ in \eqref{decomp1}, \eqref{decomp2}. More precisely, we may prove the following.
\begin{lemma}\label{lemma_simmetrie}
\begin{itemize}
\item[(1)] From the symmetry condition \eqref{simmetria} we deduce that
\begin{equation}\label{identita_simm}
I_1(k)=I_4(k)\,,\quad I_2(k)=I_3(k)\,,\quad\forall\,k\in\mathbb Z\,.
\end{equation}
\item[(2)] From \eqref{reality} we deduce that
\begin{equation}\label{identita_realta}
I_2(k)=\overline{I_1(-k)}\,,\quad\forall\,k\neq 0\,.
\end{equation}
\end{itemize}
\end{lemma}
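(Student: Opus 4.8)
The plan is to prove both identities by reindexing the finite-type sums
$I_j(k)=\sum_{\ell\in\mathcal A^j_k}\widetilde\Lambda(k-\ell,\ell)\widehat\varphi(k-\ell)\widehat\varphi(\ell)$ introduced in \eqref{fourier_quadr5}, exploiting the two structural properties of the kernel separately. The common mechanism is that each summand is attached to the lattice point $(m,\ell)=(k-\ell,\ell)$ lying on the diagonal line $m+\ell=k$, so a change of the summation variable is nothing but a lattice symmetry acting on these points, and the content of the lemma is to track how the four regions $\mathbb F_I,\dots,\mathbb F_{IV}$ are permuted.

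For part (1) I would use the involution $\ell\mapsto k-\ell$ on the summation variable. Writing $n=k-\ell$ turns the pair $(k-\ell,\ell)$ into its coordinate swap $(n,k-n)$; since the swap $(m,\ell)\mapsto(\ell,m)$ sends $\mathbb F_I$ onto $\mathbb F_{IV}$ and $\mathbb F_{II}$ onto $\mathbb F_{III}$ (a one-line check from the defining inequalities, keeping in mind the constraint $m\ell<0$), this substitution is a bijection $\mathcal A^I_k\to\mathcal A^{IV}_k$ and $\mathcal A^{II}_k\to\mathcal A^{III}_k$. In the summand, the symmetry \eqref{simmetria} gives $\widetilde\Lambda(n,k-n)=\widetilde\Lambda(k-n,n)$ while the product $\widehat\varphi(n)\widehat\varphi(k-n)$ is unchanged by commutativity, so $I_1(k)=I_4(k)$ and $I_2(k)=I_3(k)$ follow at once.

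For part (2) I would start from $\overline{I_1(-k)}$. Since $\widetilde\Lambda$ is real-valued (property (ii)) and $\overline{\widehat\varphi(j)}=\widehat\varphi(-j)$ by \eqref{coeff_fourier_condizioni} (recall that $\varphi$ is real with zero mean), conjugation replaces the Fourier factors $\widehat\varphi(-k-\ell)$, $\widehat\varphi(\ell)$ by $\widehat\varphi(k+\ell)$, $\widehat\varphi(-\ell)$. Setting $n=-\ell$ and invoking the reality condition \eqref{reality} in the form $\widetilde\Lambda(n-k,-n)=\widetilde\Lambda(k-n,n)$ turns the summand into $\widetilde\Lambda(k-n,n)\widehat\varphi(k-n)\widehat\varphi(n)$. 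The index transformation here is governed by the point reflection $(m,\ell)\mapsto(-m,-\ell)$, which maps $\mathbb F_I$ onto $\{m+\ell<0,\ m>0\}$; on the diagonal line $m+\ell=k$ this coincides with $\mathbb F_{III}$ precisely because $k\neq0$ keeps us off the boundary $m+\ell=0$. Hence $n$ ranges exactly over $\mathcal A^{III}_k$ and one obtains $\overline{I_1(-k)}=I_3(k)$; part (1) then yields $I_3(k)=I_2(k)$, which is the assertion.

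I expect the only genuine obstacle to be the careful transport of the index sets $\mathcal A^j_k$ under these two substitutions: one must verify, directly from the inequalities defining $\mathbb F_I,\dots,\mathbb F_{IV}$ together with $m\ell<0$, that the coordinate swap and the point reflection permute the regions as claimed, and that the strict/non-strict boundary $m+\ell=0$ produces no mismatch (it does not, since every sum lives on a line $m+\ell=k$ with $k\neq0$). A secondary subtlety worth flagging explicitly is that the pure reality argument lands naturally on $I_3(k)$ rather than on $I_2(k)$, so the identity as stated genuinely relies on the symmetry relation $I_2=I_3$ established in part (1).
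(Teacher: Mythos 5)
Your proof is correct and follows essentially the same route as the paper's: part (1) via the substitution $\ell\mapsto k-\ell$ combined with the diagonal swap of the frequency regions and the symmetry of $\widetilde{\Lambda}$, and part (2) via conjugation, the reality condition, and the point reflection $(m,\ell)\mapsto(-m,-\ell)$ carrying $\mathbb F_I$ onto $\mathbb F_{III}$ off the line $m+\ell=0$ (which $k\neq 0$ guarantees), concluding through $I_3(k)=I_2(k)$ from part (1). The two subtleties you flag, namely the role of the constraint $m\ell<0$ in the region bookkeeping and the fact that the reality argument lands on $I_3(k)$ rather than $I_2(k)$, are precisely the points the paper's proof also addresses.
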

\begin{proof}
{\it Statement (1).} Let us first observe that the frequency regions $\mathbb F_I$ and $\mathbb F_{IV}$ are symmetric with respect to the diagonal $\Delta:=\{(m,\ell)\,:\,\,m=\ell\}$, that is
\begin{equation*}
(m,\ell)\in\mathbb F_I\quad\Leftrightarrow\quad (\ell,m)\in\mathbb F_{IV}\,.
\end{equation*}
Using the above observation, together with \eqref{simmetria}, and performing a change of index $\ell^\prime=k-\ell$ for any $k\in\mathbb Z$ one computes:
\begin{equation*}
\begin{split}
I_4(k)&=\sum\limits_{\ell\,:\,\,(k-\ell,\ell)\in\mathbb F_{IV}}\widetilde{\Lambda}(k-\ell,\ell)\widehat{\varphi}(k-\ell)\widehat{\varphi}(\ell)=\sum\limits_{\ell\,:\,\,(\ell, k-\ell)\in\mathbb F_{I}}\widetilde{\Lambda}(k-\ell,\ell)\widehat{\varphi}(k-\ell)\widehat{\varphi}(\ell)\\
&=\sum\limits_{\ell\,:\,\,(\ell, k-\ell)\in\mathbb F_{I}}\widetilde{\Lambda}(\ell, k-\ell)\widehat{\varphi}(k-\ell)\widehat{\varphi}(\ell)=\sum\limits_{\ell^\prime\,:\,\,(k-\ell^\prime, \ell^\prime)\in\mathbb F_{I}}\widetilde{\Lambda}(k-\ell^\prime,\ell^\prime)\widehat{\varphi}(\ell^\prime)\widehat{\varphi}(k-\ell^\prime)=I_1(k)\,.
\end{split}
\end{equation*}
This establishes the first equality in \eqref{identita_simm}; the second equality in \eqref{identita_simm} follows from the same arguments above, after observing that the frequency sets $\mathbb F_{II}$ and $\mathbb F_{III}$ are symmetric with respect to $\Delta$.

{\it Statement (2).} Let us first observe that the identity \eqref{identita_realta} becomes trivial for $k>0$, since in this case one has $I_2(k)=0$ and $I_1(-k)=0$ (see \eqref{identita_A1}).

For $k<0$, we first get
\begin{equation*}
I_1(-k)=\sum\limits_{\ell\,:(-k-\ell,\ell)\in\mathbb F_{I}}\widetilde{\Lambda}(-k-\ell,\ell)\widehat{\varphi}(-k-\ell)\widehat{\varphi}(\ell)\,,
\end{equation*}
hence using that $\widetilde{\Lambda}$ satisfies \eqref{reality} and is real-valued and that $\varphi$ is also real-valued so that $\overline{\widehat{\varphi}(k)}=\widehat{\varphi}(-k)$ we compute
\begin{equation*}
\begin{split}
\overline{I_1(-k)}&=\sum\limits_{\ell\,:(-k-\ell,\ell)\in\mathbb F_{I}}\overline{\widetilde{\Lambda}(-k-\ell,\ell)}\,\,\overline{\widehat{\varphi}(-k-\ell)}\,\,\overline{\widehat{\varphi}(\ell)}\\
&=\sum\limits_{\ell\,:(-k-\ell,\ell)\in\mathbb F_{I}}\widetilde{\Lambda}(k+\ell,-\ell)\widehat{\varphi}(k+\ell)\widehat{\varphi}(-\ell)\,.
\end{split}
\end{equation*}
Now we notice that the frequency sets $\mathbb F_I$ and $\mathbb F_{III}$ are symmetric with respect to the diagonal $\Delta$, up to the point of the line $m+\ell=0$, that is
\begin{equation*}
(m,\ell)\in\mathbb F_I\quad\Leftrightarrow\quad (-m,-\ell)\in\mathbb F_{III}\setminus\{m+\ell=0\}\,.
\end{equation*}
This yields that for $k<0$ and $\ell\in\mathbb Z$
\begin{equation}\label{simmetria_I_III}
(-k-\ell,\ell)\in\mathbb F_I\quad\Leftrightarrow\quad (k+\ell,-\ell)\in\mathbb F_{III}\,;
\end{equation}
we notice that for $k<0$ the point $(k+\ell,-\ell)$ never belongs to the set $\{(m,\ell):\,\,m+\ell=0\}$. Coming back to the expression of $\overline{I_1(-k)}$ and performing the change of index $\ell^\prime=-\ell$ we may further write
\begin{equation*}
\begin{split}
\overline{I_1(-k)}&=\sum\limits_{\ell\,:(-k-\ell,\ell)\in\mathbb F_{I}}\widetilde{\Lambda}(k+\ell,-\ell)\widehat{\varphi}(k+\ell)\widehat{\varphi}(-\ell)=\sum\limits_{\ell\,:(k+\ell,-\ell)\in\mathbb F_{III}}\widetilde{\Lambda}(k+\ell,-\ell)\widehat{\varphi}(k+\ell)\widehat{\varphi}(-\ell)\\
&=\sum\limits_{\ell\,:(k-\ell^\prime,\ell^\prime)\in\mathbb F_{III}}\widetilde{\Lambda}(k-\ell^\prime,\ell^\prime)\widehat{\varphi}(k-\ell^\prime)\widehat{\varphi}(\ell^\prime)=I_3(k)\,.
\end{split}
\end{equation*}
Since we know that $I_3(k)=I_2(k)$ for all $k$ (see the second identity in \eqref{identita_simm}), we get
\begin{equation*}
\overline{I_1(-k)}=I_2(k)\,,\quad\mbox{for}\,\,k<0\,,
\end{equation*}
which completes the proof of \eqref{identita_realta}.
\end{proof}
As an immediate consequence of Lemma \ref{lemma_simmetrie} we obtain our final form of the Fourier coefficients of $\mathcal Q[\varphi]$.
\begin{corollary}\label{cor_Qphi}
For all $k\neq 0$
\begin{equation}\label{coeff_Qphi1}
\widehat{\mathcal Q[\varphi]}(k)=2I_1(k)+2\overline{I_1(-k)}\,.
\end{equation}
In particular, according to \eqref{decomp1}, \eqref{decomp2} we have
\begin{equation}\label{coeff_Qphi2}
\widehat{\mathcal Q[\varphi]}(k)=
\begin{cases}
2I_1(k)\,,\quad\mbox{if}\,\,k>0\\
\\
2\overline{I_1(-k)}\,,\quad\mbox{if}\,\,k<0\,.
\end{cases}
\end{equation}
\end{corollary}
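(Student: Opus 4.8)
The plan is to establish \eqref{coeff_Qphi1}--\eqref{coeff_Qphi2} by a short case analysis on the sign of $k$, feeding the sign-dependent decomposition of $\widehat{\mathcal Q[\varphi]}(k)$ from \eqref{decomp1}--\eqref{decomp2} into the identities of Lemma \ref{lemma_simmetrie}. The only auxiliary fact I would record first is that $I_1$ vanishes on non-positive indices: by \eqref{A1_esplicito} one has $\ell\in\mathcal A^I_k$ only when $k>0$, so $\mathcal A^I_k=\emptyset$ and hence $I_1(k)=0$ whenever $k\le 0$.

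First I would treat $k>0$. Here \eqref{decomp2} gives $\widehat{\mathcal Q[\varphi]}(k)=I_1(k)+I_4(k)$, and the first identity in \eqref{identita_simm}, namely $I_1(k)=I_4(k)$, collapses this to $2I_1(k)$. Since $-k<0$, the preliminary remark yields $I_1(-k)=0$, hence $\overline{I_1(-k)}=0$ and
\begin{equation*}
\widehat{\mathcal Q[\varphi]}(k)=2I_1(k)=2I_1(k)+2\overline{I_1(-k)}\,,
\end{equation*}
which is \eqref{coeff_Qphi1} and the first branch of \eqref{coeff_Qphi2}.

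Next I would treat $k<0$. Now \eqref{decomp1} gives $\widehat{\mathcal Q[\varphi]}(k)=I_2(k)+I_3(k)$; the second identity in \eqref{identita_simm}, namely $I_2(k)=I_3(k)$, turns this into $2I_2(k)$, and the reality identity \eqref{identita_realta}, $I_2(k)=\overline{I_1(-k)}$, rewrites it as $2\overline{I_1(-k)}$. Since $k<0$, the preliminary remark gives $I_1(k)=0$, so
\begin{equation*}
\widehat{\mathcal Q[\varphi]}(k)=2\overline{I_1(-k)}=2I_1(k)+2\overline{I_1(-k)}\,,
\end{equation*}
which is \eqref{coeff_Qphi1} together with the second branch of \eqref{coeff_Qphi2}.

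I do not expect any genuine obstacle at this stage: the substantive content---the symmetry \eqref{simmetria} and the reality condition \eqref{reality} of the symmetrized kernel $\widetilde\Lambda$, together with their consequences packaged in Lemma \ref{lemma_simmetrie}---is already in place, so the corollary reduces to assembling these pieces. The one point to keep straight is the bookkeeping showing that, for each fixed $k\neq 0$, exactly one of the two terms $I_1(k)$, $\overline{I_1(-k)}$ survives while the other drops out through the emptiness of $\mathcal A^I$.
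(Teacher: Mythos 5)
Your proposal is correct and follows exactly the route the paper intends when it calls the corollary an ``immediate consequence'' of Lemma \ref{lemma_simmetrie}: the decompositions \eqref{decomp1}--\eqref{decomp2} combined with the identities \eqref{identita_simm}, \eqref{identita_realta}, plus the observation from \eqref{A1_esplicito} that $\mathcal A^I_k=\emptyset$ (hence $I_1(k)=0$) for $k\le 0$, which lets the two branches of \eqref{coeff_Qphi2} be merged into the single formula \eqref{coeff_Qphi1}. There is nothing to add or correct.
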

Let us observe in the end that from \eqref{nucleo_simm} we see that in $\mathbb F_I=\{(m,\ell)\,:\,\,m+\ell>0\,,\,\,m<0\}$ the kernel $\widetilde{\Lambda}$ reduces to
\begin{equation}\label{nucleo_simm_F1}
\widetilde{\Lambda}(m,\ell)=m^2(3\ell+m)\,,
\end{equation}
\begin{figure}[htbp]
{\centering
\begin{tikzpicture}[scale=.38]
\draw[->] (-5,0) -- (5.5,0) node[below=.0cm]{\footnotesize$ m$};
\draw[->] (0,-4) -- (0,5) node[right]{\footnotesize$\ell $};
\draw[-] (-4,4) -- (4,-4) node[right]{$ $};
\draw (-1.5,3.5) node{ \footnotesize{$\mathbb{F}_{I}$}};
\draw (15,0) node{\footnotesize$\tilde\Lambda(m,\ell)=m^2(3\ell+m)  \quad{\rm if}\;\; (m,\ell)\in \mathbb F_{I}$};
\end{tikzpicture}
}
\end{figure}


Consequently, from \eqref{fourier_quadr5}, \eqref{AI}, \eqref{A1_esplicito} we deduce that
\begin{equation}\label{I1_esplicito}
I_1(k)=\sum\limits_{\ell\,:\,\,\ell>k}(k-\ell)^2(k+2\ell)\widehat{\varphi}(k-\ell)\widehat{\varphi}(\ell)\,,\quad\mbox{for}\,\,k>0\,,
\end{equation}
hence from \eqref{coeff_Qphi2}
\begin{equation}\label{coeff_Qphi2_esplicito}
\widehat{\mathcal Q[\varphi]}(k)=
\begin{cases} \displaystyle 2\sum\limits_{\ell\,:\,\,\ell>k}(k-\ell)^2(k+2\ell)\widehat{\varphi}(k-\ell)\widehat{\varphi}(\ell)\,,\quad\mbox{if}\,\,k>0\\
\\
\begin{split}& 2\sum\limits_{\ell\,:\,\,\ell>-k}(-k-\ell)^2(-k+2\ell)\overline{ \widehat{\varphi}(-k-\ell)}\,\,\overline{\widehat{\varphi}(\ell)}\\
&=2\sum\limits_{\ell\,:\,\,\ell>-k}(k+\ell)^2(-k+2\ell) \widehat{\varphi}(k+\ell)\,\,\widehat{\varphi}(-\ell)\quad\mbox{if}\,\,k<0\,.
\end{split}
\end{cases}
\end{equation}

\begin{remark}\label{rmk:5}
Notice that the formulas \eqref{coeff_Qphi1}, \eqref{coeff_Qphi2} are in agreement with the fact that the quadratic term $\mathcal Q[\varphi]$ is a real-valued function of $x$ (since $\varphi=\varphi(x)$ is too). Indeed \eqref{coeff_Qphi1} yields at once
\begin{equation*}
\overline{\widehat{\mathcal Q[\varphi]}(k)}=2\overline{I_1(k)}+2I_1(-k)=\widehat{\mathcal Q[\varphi]}(-k)\,,\quad\mbox{for}\,\,k\neq 0\,.
\end{equation*}
Let us even observe that formula \eqref{coeff_Qphi1} does not hold for $k=0$; indeed if such formula would be true one should have that
\begin{equation*}
\widehat{\mathcal Q[\varphi]}(0)=2I_1(0)+2\overline{I_1(0)}=0\,,
\end{equation*}
since $I_1(0)=0$; an explicit computation gives for the $0-$th Fourier coefficient of $\mathcal Q[\varphi]$ the expression
\begin{equation}\label{coeff_Q0}
\widehat{\mathcal Q[\varphi]}(0)=\sum\limits_{\ell}\widetilde{\Lambda}(-\ell,\ell)\widehat{\varphi}(-\ell)\widehat{\varphi}(\ell)
=2\sum\limits_{\ell}\vert\ell\vert^3\widehat{\varphi}(-\ell)\widehat{\varphi}(\ell)\,.
\end{equation}
The reason why formula \eqref{coeff_Qphi1} cannot be extended to $k=0$ is that such formula was obtained using the identity \eqref{identita_realta}. The latter is not true for $k=0$ since in this case the equivalence (needed to prove \eqref{identita_realta})
\begin{equation*}
(-k-\ell,\ell)=(-\ell,\ell)\in\mathbb F_I\quad\Leftrightarrow\quad (k+\ell,-\ell)=(\ell,-\ell)\in\mathbb F_{III}
\end{equation*}
does not hold; indeed, as long as $\ell>0$, the point $(\ell,-\ell)$ belongs to $\mathbb F_{III}$ whereas $(-\ell,\ell)$ does not belong to $\mathbb F_I$, see \eqref{F1}, \eqref{F3}.
\end{remark}

\subsection{Proof of Lemma \ref{lemma_stima_quadr}}\label{proof_stima_quadr}
Along the proof, we restore the notation $\psi=\langle\partial_x\rangle^r\varphi$ (cf. \eqref{derivatas}) for short. By \eqref{normaHs}
\begin{equation}\label{parseval}
\Vert\mathcal Q[\varphi]\Vert^2_{H^r(\mathbb T)}=2\pi\sum\limits_{k}\langle k\rangle^{2r}\,\vert\widehat{\mathcal Q[\varphi]}(k)\vert^2\,.
\end{equation}
Then we need an estimate of $\vert\langle k\rangle^r\,\widehat{\mathcal Q[\varphi]}(k)\vert$ for each $k$.

For $k>0$, from \eqref{coeff_Qphi2_esplicito} and \eqref{derivatas} we get
\begin{equation}\label{stima1_Qphi2}
\begin{split}
\langle k\rangle^r\,\vert\widehat{\mathcal Q[\varphi]}(k)\vert &\le 2\sum\limits_{\ell\,:\,\,\ell>k}\langle k\rangle^r(k-\ell)^2(k+2\ell)\vert\widehat{\varphi}(k-\ell)\vert\,\vert\widehat{\varphi}(\ell)\vert\\
&=2\sum\limits_{\ell\,:\,\,\ell>k}\left(\frac{\langle k\rangle}{\langle \ell\rangle}\right)^r(k-\ell)^2(k+2\ell)\vert\widehat{\varphi}(k-\ell)\vert\,\vert\widehat{\psi}(\ell)\vert\,.
\end{split}
\end{equation}
Now $\ell>k>0$ trivially implies $\langle\ell\rangle>\langle k\rangle$, hence
\begin{equation}\label{stime_nucleo+}
\left(\frac{\langle k\rangle}{\langle \ell\rangle}\right)^r\le 1\,\,(\mbox{since}\,\,r\ge 0)\qquad\mbox{and}\qquad k+2\ell<3\ell=3\vert\ell\vert\,.
\end{equation}
Then using \eqref{stime_nucleo+} in \eqref{stima1_Qphi2} we get
\begin{equation}\label{stima2_Qphi2}
\begin{split}
\langle k\rangle^r\,\vert\widehat{\mathcal Q[\varphi]}(k)\vert&\le 6\sum\limits_{\ell\,:\,\,\ell>k}(k-\ell)^2\vert\ell\vert\vert\widehat{\varphi}(k-\ell)\vert\,\vert\widehat{\psi}(\ell)\vert\\
&=6\sum\limits_{\ell\,:\,\,\ell>k}\vert\widehat{\varphi_{xx}}(k-\ell)\vert\,\vert\widehat{\psi_x}(\ell)\vert
\\
&\le 6\sum\limits_{\ell}\vert\widehat{\varphi_{xx}}(k-\ell)\vert\,\vert\widehat{\psi_x}(\ell)\vert=6\left(\vert\widehat{\varphi_{xx}}\vert\ast\vert\widehat{\psi_x}\vert\right)(k)\,.
\end{split}
\end{equation}
For $k<0$ we argue from \eqref{coeff_Qphi2_esplicito} similarly as for positive $k$ to get
\begin{equation}\label{stima3_Qphi2}
\begin{split}
\langle k\rangle^r\,\vert\widehat{\mathcal Q[\varphi]}(k)\vert &\le 2\sum\limits_{\ell\,:\,\,\ell>-k}\langle k\rangle^r(k+\ell)^2(-k+2\ell) \vert\widehat{\varphi}(k+\ell)\vert\,\vert\widehat{\varphi}(-\ell)\vert\\
&=2\sum\limits_{\ell^\prime\,:\,\,\ell^\prime<k}\langle k\rangle^r(k-\ell^\prime)^2(-k-2\ell^\prime) \vert\widehat{\varphi}(k-\ell^\prime)\vert\,\vert\widehat{\varphi}(\ell^\prime)\vert\\
&=2\sum\limits_{\ell^\prime\,:\,\,\ell^\prime<k}\left(\frac{\langle k\rangle}{\langle \ell^\prime\rangle}\right)^r(k-\ell^\prime)^2(-k-2\ell^\prime) \vert\widehat{\varphi}(k-\ell^\prime)\vert\,\vert\widehat{\psi}(\ell^\prime)\vert\,,
\end{split}
\end{equation}
where we have performed the change of index $\ell^\prime=-\ell$. Since now $\ell^\prime<k<0$ implies $\vert\ell^\prime\vert>\vert k\vert$ as before we get
\begin{equation}\label{stime_nucleo-}
\left(\frac{\langle k\rangle}{\langle \ell^\prime\rangle}\right)^r\le 1\,\,(\mbox{since}\,\,r\ge 0)\qquad\mbox{and}\qquad -k-2\ell^\prime<-3\ell^\prime=3\vert\ell^\prime\vert\,,
\end{equation}
hence, similarly as for $k$ positive, we find again
\begin{equation}\label{stima4_Qphi2}
\begin{split}
\langle k\rangle^r\,\vert\widehat{\mathcal Q[\varphi]}(k)\vert\le 6\left(\vert\widehat{\varphi_{xx}}\vert\ast\vert\widehat{\psi_x}\vert\right)(k)\,.
\end{split}
\end{equation}
Concerning the case $k=0$, from \eqref{coeff_Q0} and the trivial inequality $\langle\ell\rangle\ge 1$ we immediately get
\begin{equation}\label{stima5_Qphi2}
\begin{split}
\langle 0\rangle^r\vert\widehat{\mathcal{Q}[\varphi]}(0)\vert &=\vert\widehat{\mathcal{Q}[\varphi]}(0)\vert\\
&\le 2\sum\limits_{\ell}\vert\ell\vert^3\vert\widehat{\varphi}(-\ell)\vert\,\vert\widehat{\varphi}(\ell)\vert
=2\sum\limits_{\ell}\vert-\ell\vert^2\vert\widehat{\varphi}(-\ell)\vert\,\vert\ell\vert\frac{\vert\widehat{\psi}(\ell)\vert}{\langle\ell\rangle^r}\\
&\le 2\sum\limits_{\ell}\vert\widehat{\varphi_{xx}}(-\ell)\vert\,\vert\widehat{\psi_x}(\ell)\vert=2\left(\vert\widehat{\varphi_{xx}}\vert\ast\vert\widehat{\psi_x}\vert\right)(0)\,.
\end{split}
\end{equation}
In view of the previous calculations, we see that
\begin{equation}\label{stima6_Qphi2}
\langle k\rangle^r\vert\widehat{\mathcal{Q}[\varphi]}(k)\vert\le 6\left(\vert\widehat{\varphi_{xx}}\vert\ast\vert\widehat{\psi_x}\vert\right)(k)\,,\quad\forall\,k\in\mathbb Z\,.
\end{equation}
From \eqref{parseval} and \eqref{stima6_Qphi2} and applying Young's inequality to $\{\vert\widehat{\varphi_{xx}}\vert\}\in\ell^1$ and $\{\vert\widehat{\psi_x}\vert\}\in\ell^2$ and \eqref{imm_sobolev} for $H^1(\mathbb T)$, we finally obtain
\begin{equation}\label{stimafin_Qphi2}
\begin{split}
\Vert\langle\partial_x\rangle^r &\mathcal Q[\varphi]\Vert^2_{L^2(\mathbb T)}\le 72\pi\sum\limits_{k}\left\vert\left(\vert\widehat{\varphi_{xx}}\vert\ast\vert\widehat{\psi_x}\vert\right)(k)\right\vert^2
=72\pi\Vert\{\vert\widehat{\varphi_{xx}}\vert\ast\vert\widehat{\psi_x}\vert\}\Vert^2_{\ell^2}\\
&\le 72\pi\Vert\{\vert\widehat{\varphi_{xx}}\vert\}\Vert^2_{\ell^1}\Vert\{\vert\widehat{\psi_x}\vert\}\Vert^2_{\ell^2}\le C\Vert\varphi_{xx}\Vert^2_{H^1(\mathbb T)}\Vert\psi_x\Vert^2_{L^2(\mathbb T)}\le C\Vert\varphi_{x}\Vert^2_{H^2(\mathbb T)}\Vert\varphi_x\Vert^2_{H^r(\mathbb T)}\,,
\end{split}
\end{equation}
with some positive numerical constant $C$ independent of $r$. This completes the proof of Lemma \ref{lemma_stima_quadr}.

\section{Lower semi-continuity of the energy}\label{Ioffe}
Here we recall the statement of a version of Ioffe  \cite[Theorem 5.8, pag 267]{Ambrosio} (see also \cite[Theorem 21, pg 171]{Ekeland-et-all})
\begin{theorem}\label{teorema_ioffe}
Let $\Omega\subset \mathbb R^N$ be an open bounded set. Let $f:\Omega\times\mathbb R^{n+k}\rightarrow [0,+\infty]$ be a normal function (i.e $f(\cdot,s,z)$ is measurable, $f(x,\cdot,\cdot)$ is lower semicontinuous for a.e. $x$) and assume that $z\rightarrow f(x,s,z)$ is convex in $\mathbb R^k$ for any $x\in \Omega$ and any $s\in \mathbb R^n$. Then
\begin{equation*}
\int_{\Omega}f(x,u,v)\,dx\leq \liminf\limits_{m \to +\infty} \int_{\Omega}f(x,u_m,v_m)\,dx\,,
\end{equation*}
whenever $\{u_m\}\subset [L^1(\Omega)]^n$ strongly converges to $u$ and $\{v_m\}\subset [L^1(\Omega)]^k$ weakly converges to $v$.
\end{theorem}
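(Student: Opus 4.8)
The plan is to follow the classical De Giorgi--Ioffe scheme: reduce the general convex normal integrand to the \emph{affine} case through a sup-representation, and then exploit the two \emph{different} modes of convergence of $u_m$ and $v_m$ separately. First I would extract a subsequence (not relabelled) along which $\int_\Omega f(x,u_m,v_m)\,dx$ tends to the $\liminf$ on the right-hand side, and along which, thanks to the strong $L^1$ convergence $u_m\to u$, one also has $u_m\to u$ pointwise a.e.\ on $\Omega$. Next I would record the only property of $\{v_m\}$ that is really needed beyond its weak limit: a weakly convergent sequence in $L^1(\Omega)$ is relatively weakly compact, hence by the Dunford--Pettis theorem it is \emph{uniformly integrable}, i.e.\ for every $\varepsilon>0$ there is $\eta>0$ with $\sup_m\int_A|v_m|\,dx<\varepsilon$ whenever $|A|<\eta$; in particular $\sup_m\|v_m\|_{L^1}<\infty$.

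The key structural step is the representation of $f$. Since $f\ge 0$ is a normal integrand convex in $z$, there are countably many Carath\'eodory functions $a_i\colon\Omega\times\mathbb{R}^n\to\mathbb{R}$ and $b_i\colon\Omega\times\mathbb{R}^n\to\mathbb{R}^k$, bounded and continuous in the $s$-variable, such that $f(x,s,z)=\sup_i\bigl(a_i(x,s)+b_i(x,s)\cdot z\bigr)$ for a.e.\ $x$ and all $(s,z)$. Putting $f_j(x,s,z):=\max_{1\le i\le j}\bigl(a_i(x,s)+b_i(x,s)\cdot z\bigr)$, the $f_j$ increase pointwise to $f$, and each $f_1(x,u,v)$ is integrable (boundedness of $a_1,b_1$ together with $v\in L^1$). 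By monotone convergence applied to $f_j-f_1\ge 0$ one gets $\int_\Omega f_j(x,u,v)\,dx\to\int_\Omega f(x,u,v)\,dx$, so it suffices to prove, for each fixed $j$, the inequality $\int_\Omega f_j(x,u,v)\,dx\le\liminf_m\int_\Omega f(x,u_m,v_m)\,dx$; and since $f_j\le f$ pointwise this reduces to the lower semicontinuity of the functional attached to the single function $f_j$.

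For fixed $j$ I would partition $\Omega$ into measurable sets $E_1,\dots,E_j$ on which the maximum defining $f_j(x,u(x),v(x))$ is realized by the $i$-th affine piece, so that $\int_\Omega f_j(x,u,v)\,dx=\sum_{i=1}^j\int_{E_i}\bigl(a_i(x,u)+b_i(x,u)\cdot v\bigr)\,dx$, while on each $E_i$ one has the pointwise bound $f_j(x,u_m,v_m)\ge a_i(x,u_m)+b_i(x,u_m)\cdot v_m$. Integrating and summing, $\int_\Omega f_j(x,u_m,v_m)\,dx\ge\sum_{i=1}^j\int_{E_i}\bigl(a_i(x,u_m)+b_i(x,u_m)\cdot v_m\bigr)\,dx$, so it is enough to pass to the limit termwise. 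Here $\int_{E_i}a_i(x,u_m)\,dx\to\int_{E_i}a_i(x,u)\,dx$ by dominated convergence (boundedness and continuity of $a_i$, and $u_m\to u$ a.e.); $\int_{E_i}b_i(x,u)\cdot v_m\,dx\to\int_{E_i}b_i(x,u)\cdot v\,dx$ because $b_i(\cdot,u)\in L^\infty$ and $v_m\rightharpoonup v$ weakly in $L^1$; and the remaining cross term $\int_{E_i}\bigl(b_i(x,u_m)-b_i(x,u)\bigr)\cdot v_m\,dx$ tends to $0$ by combining Egorov's theorem (uniform convergence $b_i(\cdot,u_m)\to b_i(\cdot,u)$ off a set of arbitrarily small measure) with the uniform integrability of $\{v_m\}$ on that small set. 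Summing over $i$ gives $\int_\Omega f_j(x,u,v)\,dx\le\liminf_m\int_\Omega f_j(x,u_m,v_m)\,dx\le\liminf_m\int_\Omega f(x,u_m,v_m)\,dx$, and letting $j\to\infty$ concludes.

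The hard part is the representation lemma: producing affine minorants $a_i(x,s)+b_i(x,s)\cdot z$ with the required measurability in $x$ and continuity in $s$, with pointwise supremum exactly $f$. This is precisely where normality and convexity in $z$ are genuinely used, and it is the technical core of the statement (attributable to De Giorgi and Ioffe); once it is available, everything else is a careful but routine interplay of Egorov's theorem, uniform integrability, and the separation of the strong and weak convergences. A secondary point to watch is that weak $L^1$ convergence, not mere $L^1$ boundedness, is indispensable in controlling $b_i(x,u)\cdot v_m$ and the cross term, which is exactly why the Dunford--Pettis equi-integrability is invoked at the outset.
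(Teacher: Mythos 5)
The paper does not actually prove Theorem \ref{teorema_ioffe}: it is recalled verbatim from the literature, with the proof delegated to \cite{Ambrosio} and \cite{Ekeland-et-all}. So there is no internal argument to compare you against; what can be said is that your proposal is a faithful reconstruction of the classical De Giorgi--Ioffe proof contained in precisely those references, and it is correct as far as it goes. All the reduction steps are sound: passing to a subsequence realizing the liminf along which $u_m\to u$ a.e.; extracting equi-integrability of $\{v_m\}$ from weak $L^1$ convergence via Dunford--Pettis; replacing $f$ by the finite maxima $f_j$ and recovering $\int_\Omega f(x,u,v)\,dx$ by monotone convergence applied to $f_j-f_1\ge 0$; the measurable partition $E_1,\dots,E_j$ according to the first index attaining the maximum; and the three termwise limits, where splitting the critical term into $b_i(\cdot,u)\cdot v_m$ (handled by $L^\infty$--$L^1$ duality against the weak limit) and the cross term $\bigl(b_i(\cdot,u_m)-b_i(\cdot,u)\bigr)\cdot v_m$ (handled by Egorov off a set of small measure plus equi-integrability on it) is exactly the point where the strong convergence of $u_m$ and the weak convergence of $v_m$ are each used once and only once.

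The one ingredient you do not prove is the sup-representation $f(x,s,z)=\sup_i\bigl(a_i(x,s)+b_i(x,s)\cdot z\bigr)$ with bounded Carath\'eodory coefficients. You flag it honestly as the technical core, and citing it is no worse than what the paper itself does (it cites the entire theorem); but note that boundedness and continuity in $s$ of the coefficients require genuine work (truncated affine minorants added to the countable family, plus a Scorza--Dragoni-type argument), and your later steps rely on that boundedness in at least three places: the integrability of $f_1(x,u,v)$ needed to start the monotone convergence, the dominated convergence for $\int_{E_i}a_i(x,u_m)\,dx$, and the bound $2C\sup_m\int_A|v_m|\,dx$ in the cross term. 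If one prefers a representation without bounded coefficients, the argument can be repaired by intersecting each $E_i$ with $\{x:\,|a_i(x,u(x))|+|b_i(x,u(x))|\le M\}$ and exhausting in $M$. Finally, it is worth observing that for the only use the paper makes of the theorem (Corollary \ref{cor_ioffe}, with integrand $\max\{\delta,\mu-2s\}\vert z\vert^2$, independent of $x$ and continuous in $(s,z)$) the representation is explicit, e.g. $\vert z\vert^2=\sup_{w\in\mathbb Q^k}\bigl(2\,w\cdot z-\vert w\vert^2\bigr)$, so in that special case your scheme becomes entirely self-contained.
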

We apply this theorem to the study of the lower semi-continuity of the following functional
\begin{equation}\label{funzionale_int}
\mathcal G(u,v)=\int_{\mathbb T}\max\left\{{\delta},\,\mu-2u(x)\right\}\vert v(x)\vert^2\,dx
\end{equation}
which comes from the energy \eqref{ym} involved in the estimates \eqref{dis_ym}, \eqref{dis_int_ym} (see Section \ref{sec_existence})\footnote{The reason why in the definition of $\mathcal G$ we consider $\max\left\{ \delta,\,\mu-2u(x)\right\}$ (instead of $\mu-2u(x)$ alone, in agreement with \eqref{ym}) is technical: it guarantees the non negativity of the function under the integral sign in \eqref{funzionale_int} as required in Theorem \ref{teorema_ioffe}. One should avoid such technical requirement by considering a more sophisticated version of Ioffe's Theorem (see \cite[Theorem 21]{Ekeland-et-all}).}. With respect to the general form of the functional considered in the statement of Theorem \ref{teorema_ioffe}, here $\Omega=\mathbb T$ and $N=n=k=1$ and the function $f(x,s,z)$ is defined by
\begin{equation*}
f(x,s,z)=F(s,z)=\max\left\{\delta,\,\mu-2s\right\}\vert z\vert^2\,.
\end{equation*}
Let us notice that all the assumptions of Theorem \ref{teorema_ioffe} about the function $f(x,s,z)$ are satisfied by $F(s,z)$ defined above; in particular, $F$ is independent of $x$ and continuous with respect to $(s,z)$; moreover the function $z\mapsto F(s,z)$ is convex on $\mathbb R$ for every $s\in\mathbb R$, and
\begin{equation*}
F(s,z)\ge \delta \vert z\vert^2
\end{equation*}
yields that $F$ is valued in $[0,+\infty[$.

In terms of the functional $\mathcal G$, the energy \eqref{ym} can be restated as
\begin{equation}\label{Gym}
\begin{split}
\mathcal E(\varphi_m(t))&=\Vert\varphi_{m,t}(t)\Vert^2_{H^2(\mathbb T)}+\int_{\mathbb T}(\mu-2\phi_{m,x}(t))\vert \langle\partial_x\rangle^2\varphi_{m,x}(t)\vert^2\,dx\\
&=\Vert\varphi_{m,t}(t)\Vert^2_{H^2(\mathbb T)}+\mathcal G\left(\phi_{m,x}(t), \langle\partial_x\rangle^2\varphi_{m,x}(t)\right)\,,
\end{split}
\end{equation}
where we recall that along the sequence $\{\varphi_m\}$ the sign condition \eqref{131} is satisfied on $[0,T]$ (hence $\mu-2\phi_{m,x}(t)=\max\left\{ \delta,\,\mu-2\phi_{m,x}(t)\right\}$).

Using now that the sequences $\{\varphi_{m}(t)\}$ and $\{\varphi_{m,t}(t)\}$ are weakly convergent respectively in $H^3(\mathbb T)$ and $H^2(\mathbb T)$, for all $t\in[0,T]$ (see \eqref{conv_m}), and applying Theorem \ref{teorema_ioffe}, we are able to prove the following result.

\begin{corollary}\label{cor_ioffe}
Assume that for all $t\in[0,T]$
\begin{equation*}
\varphi_m(t)\rightharpoonup\varphi(t)\,\,\,\mbox{w-}H^3(\mathbb T)\quad\mbox{and}\quad\varphi_{m,t}(t)\rightharpoonup\varphi_t(t)\,\,\,\mbox{w-}H^2(\mathbb T)\,.
\end{equation*}
Then
\begin{equation}\label{lscE}
\mathcal E(\varphi(t))\le\liminf\limits_{m\to +\infty}\mathcal E(\varphi_m(t))\,.
\end{equation}
\end{corollary}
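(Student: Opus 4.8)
The plan is to exploit the decomposition \eqref{Gym} of the energy into the sum of a squared Hilbert-space norm and the integral functional $\mathcal G$ of \eqref{funzionale_int}, and to treat the two summands separately, invoking Ioffe's Theorem \ref{teorema_ioffe} only for the second one. For the kinetic term I would simply use that $\Vert\cdot\Vert^2_{H^2(\mathbb T)}$, being the square of the norm of the Hilbert space $H^2(\mathbb T)$, is sequentially weakly lower semi-continuous; hence from $\varphi_{m,t}(t)\rightharpoonup\varphi_t(t)$ in $H^2(\mathbb T)$ one gets
\[
\Vert\varphi_t(t)\Vert^2_{H^2(\mathbb T)}\le\liminf_{m\to+\infty}\Vert\varphi_{m,t}(t)\Vert^2_{H^2(\mathbb T)}.
\]

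Next, to handle the potential term through Theorem \ref{teorema_ioffe} with $F(s,z)=\max\{\delta,\mu-2s\}\vert z\vert^2$, I must verify its two convergence hypotheses for the pair $(u_m,v_m)=(\phi_{m,x}(t),\langle\partial_x\rangle^2\varphi_{m,x}(t))$. For the strongly converging variable I would observe that the weak convergence $\varphi_m(t)\rightharpoonup\varphi(t)$ in $H^3(\mathbb T)$, combined with the compact embedding $H^3(\mathbb T)\hookrightarrow H^2(\mathbb T)$, yields strong convergence $\varphi_m(t)\to\varphi(t)$ in $H^2(\mathbb T)$; differentiating and using the Sobolev continuity \eqref{stima_hilbert} of $\mathbb H$ then gives $\phi_{m,x}(t)\to\phi_x(t)$ strongly in $H^1(\mathbb T)\hookrightarrow L^1(\mathbb T)$. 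For the weakly converging variable, since $\langle\partial_x\rangle^2\partial_x$ is a bounded linear map $H^3(\mathbb T)\to L^2(\mathbb T)$, hence weakly continuous, we obtain $\langle\partial_x\rangle^2\varphi_{m,x}(t)\rightharpoonup\langle\partial_x\rangle^2\varphi_x(t)$ in $L^2(\mathbb T)$, which on the finite-measure torus descends to weak convergence in $L^1(\mathbb T)$ (test functions range in $L^\infty\subset L^2$). As already noted in the excerpt, $F$ is a nonnegative normal integrand, convex in $z$, so Theorem \ref{teorema_ioffe} applies and gives
\[
\mathcal G(\phi_x(t),\langle\partial_x\rangle^2\varphi_x(t))\le\liminf_{m\to+\infty}\mathcal G(\phi_{m,x}(t),\langle\partial_x\rangle^2\varphi_{m,x}(t)).
\]

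Finally I would assemble the two estimates. Because $\mu-2\phi_x(t)\le\max\{\delta,\mu-2\phi_x(t)\}$ pointwise, the genuine energy of the limit is controlled by the $\mathcal G$-functional, namely $\mathcal E(\varphi(t))\le\Vert\varphi_t(t)\Vert^2_{H^2(\mathbb T)}+\mathcal G(\phi_x(t),\langle\partial_x\rangle^2\varphi_x(t))$, so that no sign condition on the limit is needed at this stage; for the approximating terms the sign condition \eqref{131} turns the analogous inequality into the equality \eqref{Gym}. Combining the two lower-semicontinuity bounds and using the superadditivity $\liminf a_m+\liminf b_m\le\liminf(a_m+b_m)$ then yields
\[
\mathcal E(\varphi(t))\le\liminf_{m\to+\infty}\mathcal E(\varphi_m(t)),
\]
which is exactly \eqref{lscE}. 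I expect the only delicate point to be the verification of the Ioffe hypotheses: securing the \emph{strong} $L^1$ convergence of $\phi_{m,x}(t)$ out of the merely weak $H^3$ convergence via the compact embedding, and checking that the weak $L^2$ convergence of $\langle\partial_x\rangle^2\varphi_{m,x}(t)$ indeed passes to weak $L^1$ convergence. Everything else is routine.
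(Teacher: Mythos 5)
Your proof is correct and takes essentially the same route as the paper's: weak lower semi-continuity of the squared $H^2$-norm for the kinetic term, Ioffe's Theorem \ref{teorema_ioffe} applied to $\mathcal G$ after extracting strong $L^1(\mathbb T)$ convergence of $\phi_{m,x}(t)$ from the compact embedding and weak $L^1(\mathbb T)$ convergence of $\langle\partial_x\rangle^2\varphi_{m,x}(t)$ from boundedness of $\langle\partial_x\rangle^2\partial_x:H^3(\mathbb T)\to L^2(\mathbb T)$, and finally superadditivity of $\liminf$. Your one deviation is minor but sound: using the pointwise bound $\mu-2\phi_x(t)\le\max\{\delta,\mu-2\phi_x(t)\}$ to control $\mathcal E(\varphi(t))$ by the $\mathcal G$-functional spares you the sign condition on the limit, which the paper instead uses implicitly (it is available from \eqref{sign_cond_limit} in Step (3.7)).
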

\begin{proof}
From the weak convergence of $\{\varphi_{m,t}(t)\}$ to $\varphi_t(t)$ in $H^2(\mathbb T)$ and the lower semi-continuity of the norm in $H^2(\mathbb T)$ we deduce that
\begin{equation}\label{lsc1}
\Vert\varphi_t(t)\Vert^2_{H^2(\mathbb T)}\le\liminf\limits_{m\to +\infty}\Vert\varphi_{m,t}(t)\Vert^2_{H^2(\mathbb T)}\,.
\end{equation}
From the weak convergence of $\{\varphi_{m}(t)\}$ to $\varphi(t)$ in $H^3(\mathbb T)$, the compactness of the imbedding $H^2(\mathbb T)\hookrightarrow L^2(\mathbb T)$ and the $L^2-$continuity of $\mathbb H$ we also deduce that
\begin{equation*}
\varphi_{m,x}(t)\rightarrow\varphi_{x}(t)\,\,\,\mbox{s-}L^2(\mathbb T)\quad\Rightarrow\quad \phi_{m,x}(t)\rightarrow\phi_{x}(t)\,\,\,\mbox{s-}L^2(\mathbb T)
\end{equation*}
and
\begin{equation*}
\langle\partial_x\rangle^2\varphi_{m,x}(t)\rightharpoonup\langle\partial_x\rangle^2\varphi_x(t)\,\,\,\mbox{w-}L^2(\mathbb T)\,.
\end{equation*}
We are now in the position to apply to $\mathcal G$ the result of Ioffe Theorem \ref{teorema_ioffe} with $u_m=\mu-2\phi_{m,x}(t)$, $v_m=\langle\partial_x\rangle^2\varphi_{m,x}(t)$, $u=\mu-2\phi_{x}(t)$, and $v=\langle\partial_x\rangle^2\varphi_{x}(t)$. From the previous arguments we have that
\begin{equation*}
\begin{split}
u_m\rightarrow u\,\,\,\mbox{s-}L^2(\mathbb T)\quad\Rightarrow\quad u_m\rightarrow u\,\,\,\mbox{s-}L^1(\mathbb T)\,,\\
v_m\rightharpoonup v \,\,\,\mbox{w-}L^2(\mathbb T)\quad\Rightarrow\quad v_m\rightharpoonup v\,\,\,\mbox{w-}L^1(\mathbb T)\,,
\end{split}
\end{equation*}
because $L^2(\mathbb T)\subset L^1(\mathbb T)$. Then from Theorem \ref{teorema_ioffe} we get
\begin{equation}\label{lsc2}
\begin{split}
\int_{\mathbb T}(\mu-2\phi_{x}(t))&\vert \langle\partial_x\rangle^2\varphi_{x}(t)\vert^2\,dx=\mathcal G(u,v)\\
&\le\liminf\limits_{m\to +\infty}\mathcal G(u_m,v_m)=\liminf\limits_{m\to +\infty}\int_{\mathbb T}(\mu-2\phi_{m,x}(t))\vert \langle\partial_x\rangle^2\varphi_{m,x}(t)\vert^2\,dx\,.
\end{split}
\end{equation}
Adding \eqref{lsc1}, \eqref{lsc2} we get \eqref{lscE},
and the proof is complete.
\end{proof}

\subsection{Convergence in energy and strong convergence}
Let $\varphi\in C_{w}([0,T]; H^3(\mathbb T))\cap C^1_w([0,T]; H^2(\mathbb T))$ be the weak limit of the sequence $\{\varphi_m\}$ of approximating solutions of problem \eqref{onde_integro_diff}, \eqref{id} constructed in the proof of Theorem \ref{th_esistenza}. This section is devoted to the proof that the weak convergence
\begin{equation}\label{_conv_debole_nuova}
\varphi(t)\rightharpoonup\varphi(0)\,\,\,\mbox{w-}H^3(\mathbb T)\quad\mbox{and}\quad\varphi_{t}(t)\rightharpoonup\varphi_t(0)\,\,\,\mbox{w-}H^2(\mathbb T)
\end{equation}
and the convergence in energy
\begin{equation}\label{conv_energia}
\lim\limits_{t\to 0^+}\mathcal E(\varphi(t))=\mathcal E(\varphi(0))
\end{equation}
imply the strong convergence
\begin{equation*}
\varphi(t)\rightarrow\varphi(0)\,\,\,\mbox{s-}H^3(\mathbb T)\quad\mbox{and}\quad\varphi_{t}(t)\rightarrow\varphi_t(0)\,\,\,\mbox{s-}H^2(\mathbb T)\,.
\end{equation*}
The following elementary result on real sequences will be used.
\begin{lemma}\label{successioni}
Let $\{a_m\}$, $\{b_m\}$ be two real-valued sequences such that
\begin{equation*}
\liminf\limits_{m\to +\infty}a_m\ge a\quad\mbox{and}\quad \liminf\limits_{m\to +\infty}b_m\ge b\,,
\end{equation*}
and
\begin{equation*}
\lim\limits_{m\to +\infty}(a_m+b_m)=a+b\,,
\end{equation*}
for $a, b\in\mathbb R$. Then
\begin{equation*}
\lim\limits_{m\to +\infty}a_m=a\quad\mbox{and}\quad \lim\limits_{m\to +\infty}b_m=b\,.
\end{equation*}
\end{lemma}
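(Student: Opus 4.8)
The plan is to reduce everything to a single application of the elementary inequality $\liminf \le \limsup$, exploiting the fact that one of the two sequences in play, namely $\{a_m+b_m\}$, is assumed to converge. The crucial observation is that adding a convergent sequence commutes with $\limsup$ and $\liminf$: if $\lim_m c_m = c$ exists, then for any real sequence $\{d_m\}$ one has $\limsup_m (c_m+d_m)=c+\limsup_m d_m$ and likewise with $\liminf$. I would state (or simply recall, as standard) this fact at the outset, together with $\limsup_m(-d_m)=-\liminf_m d_m$.

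First I would write $a_m=(a_m+b_m)-b_m$ and take $\limsup$ on both sides. Since $\{a_m+b_m\}$ converges to $a+b$, the recalled identities give
\[
\limsup\limits_{m\to+\infty} a_m=(a+b)+\limsup\limits_{m\to+\infty}(-b_m)=(a+b)-\liminf\limits_{m\to+\infty} b_m.
\]
Now the hypothesis $\liminf_m b_m\ge b$ yields the upper bound $\limsup_m a_m\le (a+b)-b=a$. Combining this with the other hypothesis $\liminf_m a_m\ge a$, I obtain the chain
\[
a\le\liminf\limits_{m\to+\infty} a_m\le\limsup\limits_{m\to+\infty} a_m\le a,
\]
which forces $\liminf_m a_m=\limsup_m a_m=a$, i.e. $\lim_m a_m=a$.

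Once the convergence of $\{a_m\}$ is established, the convergence of $\{b_m\}$ comes for free: writing $b_m=(a_m+b_m)-a_m$ and using that both $\{a_m+b_m\}$ and $\{a_m\}$ now converge, I get $\lim_m b_m=(a+b)-a=b$. (Alternatively, one may simply invoke the symmetry of the roles of $a_m$ and $b_m$ in the hypotheses and repeat the previous paragraph verbatim.) There is no genuine obstacle here; the only point requiring a moment's care is the handling of $\limsup$ of a sum, where it is essential that one summand actually converges—this is precisely what allows the inequality $\limsup(x_m+y_m)\le\limsup x_m+\limsup y_m$ to become the exact identity used above, and it is exactly the situation guaranteed by the convergence of $\{a_m+b_m\}$.
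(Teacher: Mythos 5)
Your proof is correct, but it follows a genuinely different route from the paper's. The paper first uses superadditivity of $\liminf$ to get
\[
a+b=\lim_{m\to+\infty}(a_m+b_m)\ge\liminf_{m\to+\infty}a_m+\liminf_{m\to+\infty}b_m\ge a+b\,,
\]
forcing $\liminf_m a_m=a$ and $\liminf_m b_m=b$, and then upgrades these $\liminf$ identities to full limits by contradiction: assuming $a_m\not\to a$, it extracts a subsequence $a_{m_k}\to\widetilde a>a$ and reruns the first step along the subsequence to reach a contradiction. You instead control the $\limsup$ directly, writing $a_m=(a_m+b_m)-b_m$ and using the exact additivity of $\limsup$ with a convergent summand to get $\limsup_m a_m\le(a+b)-\liminf_m b_m\le a$, which squeezes $a_m$ in one line; the convergence of $b_m$ then follows by subtraction. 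Your route is shorter and entirely sidesteps the subsequence machinery; in particular, the paper's extraction of a \emph{convergent} subsequence with limit $\widetilde a\ne a$ tacitly relies on boundedness of $\{a_m\}$ (which does follow from the hypotheses but is not verified there), a point your argument never needs. What the paper's approach buys is that it only invokes the superadditivity inequality for $\liminf$ rather than the sharper identity $\limsup_m(c_m+d_m)=c+\limsup_m d_m$ for $c_m\to c$ — though you correctly identify and justify that identity as the one delicate ingredient, so your proof is complete as written.
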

\begin{proof}
From the assumptions it easily follows that
\begin{equation*}
\liminf\limits_{m\to +\infty}(a_m+b_m)\ge\liminf\limits_{m\to +\infty}a_m+\liminf\limits_{m\to +\infty}b_m\ge a+b=\lim\limits_{m\to +\infty}(a_m+b_m)=\limsup\limits_{m\to +\infty}(a_m+b_m)\,,
\end{equation*}
which implies
\begin{equation}\label{limite1}
a+b=\liminf\limits_{m\to +\infty}a_m+\liminf\limits_{m\to +\infty}b_m\,.
\end{equation}
This gives
\begin{equation}\label{limite2}
a=\liminf\limits_{m\to +\infty}a_m\quad\mbox{and}\quad b=\liminf\limits_{m\to +\infty}b_m\,.
\end{equation}
By contradiction, let us suppose that $\{a_m\}$ does not converge to $a$; then there exists a subsequence $\{a_{m_k}\}$ such that
\begin{equation*}
a_{m_k}\rightarrow\widetilde a\neq a\,.
\end{equation*}
Because of \eqref{limite2}, this implies that $\widetilde{a}>a$. Considering also the subsequence $\{b_{m_k}\}$ of $\{b_m\}$ (with the same indices $m_k$ as in $\{a_{m_k}\}$) and repeating on $\{a_{m_k}\}$ and $\{b_{m_k}\}$ the arguments above we find that
\begin{equation*}
a+b=\liminf\limits_{k\to +\infty}a_{m_k}+\liminf\limits_{k\to +\infty}b_{m_k}\,,
\end{equation*}
hence
\begin{equation*}
a=\liminf\limits_{k\to +\infty}a_{m_k}\quad\mbox{and}\quad b=\liminf\limits_{k\to +\infty}b_{m_k}\,.
\end{equation*}
This leads to a contradiction, since $\liminf\limits_{k\to +\infty}a_{m_k}\equiv\lim\limits_{k\to +\infty}a_{m_k}=\widetilde a>a$. Thus every subsequence of $\{a_m\}$ is convergent to $a$, which gives that $\lim\limits_{m\to +\infty}a_m=a$. The same argument applies to prove that $\lim\limits_{m\to +\infty}b_m=b$.
\end{proof}
We will also use the following result on weak and strong $L^2-$convergence.
\begin{lemma}\label{weak/strong}
Let $\{u_m\}$ and $\{v_m\}$ be two sequences in $L^2(\Omega)$ where $\Omega$ is an open bounded subset of $\mathbb R^n$. Assume that there exist $u, v\in L^2(\Omega)$ and positive constants $\delta$ and $C$ such that
\begin{itemize}
\item[a.] $v_m\rightarrow v\,,\quad\mbox{a.e. in}\,\,\Omega\,;$
\item[b.] $\delta\le v_m\le C$ in $\Omega$ for all $m$;
\item[c.] $u_m\rightharpoonup u\,,\quad\mbox{w-}L^2(\Omega)\,$;
\item[d.] $\lim\limits_{m\to +\infty}\int_{\Omega}v_m u_m^2\,dx=\int_{\Omega}v u^2\,dx$.
\end{itemize}
Then
\begin{equation*}
u_m\rightarrow u\,,\quad\mbox{s-}L^2(\Omega)\,.
\end{equation*}
\end{lemma}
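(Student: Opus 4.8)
The plan is to exploit the uniform lower bound $v_m\ge\delta>0$ from hypothesis (b): if I can show that $\int_\Omega v_m\,(u_m-u)^2\,dx\to 0$, then the coercivity estimate $\delta\int_\Omega |u_m-u|^2\,dx\le\int_\Omega v_m\,(u_m-u)^2\,dx$ immediately yields the desired strong convergence $u_m\to u$ in $L^2(\Omega)$. I would therefore expand the weighted square as
\[
\int_\Omega v_m\,(u_m-u)^2\,dx=\int_\Omega v_m\,u_m^2\,dx-2\int_\Omega v_m\,u_m\,u\,dx+\int_\Omega v_m\,u^2\,dx
\]
and prove that each of the three integrals on the right converges to $\int_\Omega v\,u^2\,dx$, so that the total tends to $(1-2+1)\int_\Omega v\,u^2\,dx=0$.

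The first integral converges to $\int_\Omega v\,u^2\,dx$ directly by hypothesis (d). For the third integral I would argue by dominated convergence: since $v_m\to v$ a.e. by (a) and $|v_m\,u^2|\le C\,u^2\in L^1(\Omega)$ by (b) (recall $u\in L^2(\Omega)$), Lebesgue's theorem gives $\int_\Omega v_m\,u^2\,dx\to\int_\Omega v\,u^2\,dx$.

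The cross term is the only point requiring care, and is where I would invoke the weak/strong product principle. First I would observe that $v_m\,u\to v\,u$ \emph{strongly} in $L^2(\Omega)$: indeed $v_m\,u\to v\,u$ a.e. by (a), while $|v_m\,u-v\,u|^2\le 4C^2\,u^2\in L^1(\Omega)$ by (b), so dominated convergence applies. Writing the integrand as $(v_m\,u)\,u_m$, the product of the strongly convergent sequence $\{v_m u\}$ with the weakly convergent sequence $\{u_m\}$ (hypothesis (c)) passes to the limit by the standard fact that $\int f_m g_m\to\int f g$ whenever $f_m\to f$ strongly and $g_m\rightharpoonup g$ weakly in $L^2$. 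This gives $\int_\Omega v_m\,u_m\,u\,dx\to\int_\Omega v\,u\cdot u\,dx=\int_\Omega v\,u^2\,dx$. Collecting the three limits yields $\int_\Omega v_m\,(u_m-u)^2\,dx\to 0$, and the proof concludes via the coercivity bound above.

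The only genuine obstacle is the cross term: one must resist estimating $\int_\Omega v_m\,u_m\,u\,dx$ naively, since weak convergence alone does not let one pass to the limit in a product of two weakly convergent factors. The resolution is to absorb the weight into the fixed limit $u$, turning $v_m u$ into a strongly convergent factor so that the weak/strong pairing applies. Everything else reduces to dominated convergence and the elementary bound $v_m\ge\delta$.
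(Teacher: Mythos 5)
Your proof is correct and follows essentially the same route as the paper: the identical expansion of $\int_\Omega v_m(u_m-u)^2\,dx$ into three terms, hypothesis (d) for the first, dominated convergence (via (a) and (b)) for $\int_\Omega v_m u^2\,dx$, the strong $L^2$ convergence $v_m u\to vu$ paired against the weak convergence $u_m\rightharpoonup u$ for the cross term, and finally the coercivity bound $\delta\int_\Omega(u_m-u)^2\,dx\le\int_\Omega v_m(u_m-u)^2\,dx$. The only cosmetic difference is that the paper phrases the passage to the limit through $\limsup$ of each term rather than asserting the full limit directly, which changes nothing in substance.
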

\begin{proof}
Using b. we get
\begin{equation*}
\begin{split}
\limsup\limits_{m\to +\infty}&\,\delta\,\int_{\Omega}(u_m-u)^2dx\le \limsup\limits_{m\to +\infty}\int_{\Omega}v_m(u_m-u)^2dx\\
&=\limsup\limits_{m\to +\infty}\left(\int_{\Omega}v_mu_m^2dx+\int_{\Omega}v_mu^2dx-2\int_{\Omega}v_muu_m\,dx\right)\,.
\end{split}
\end{equation*}
The assumption d. gives that
\begin{equation}\label{conv1}
\limsup\limits_{m\to +\infty}\int_{\Omega}v_mu_m^2dx=\lim\limits_{m\to +\infty}\int_{\Omega}v_mu_m^2dx=\int_{\Omega}vu^2dx\,.
\end{equation}
From assumptions a., b. and using that $u\in L^2(\Omega)$ we have
\begin{equation*}
\begin{split}
& v_mu^2\rightarrow vu^2\quad\mbox{a.e. in}\,\,\Omega\,; \\
&\vert v_mu^2\vert\le C u^2\in L^1(\Omega)\,,
\end{split}
\end{equation*}
then from the dominated convergence theorem we get
\begin{equation}\label{conv2}
\limsup\limits_{m\to +\infty}\int_{\Omega}v_mu^2dx=\lim\limits_{m\to +\infty}\int_{\Omega}v_mu^2dx=\int_{\Omega}vu^2dx\,.
\end{equation}
Again from a. we get
\begin{equation*}
\begin{split}
& v_mu\rightarrow vu\quad\mbox{a.e. in}\,\,\Omega\,; \\
&\vert v_mu\vert\le C \vert u\vert\in L^2(\Omega)\,,
\end{split}
\end{equation*}
hence again by the dominated convergence theorem we get
\begin{equation*}
v_m u\rightarrow vu\,,\quad\mbox{s-}L^2(\Omega)\,.
\end{equation*}
Then from c. we derive
\begin{equation}\label{conv3}
\limsup\limits_{m\to +\infty}-2\int_{\Omega}v_mu u_mdx=\lim\limits_{m\to +\infty}-2\int_{\Omega}v_mu u_mdx=-2\int_{\Omega}vu^2dx\,.
\end{equation}
From \eqref{conv1}-\eqref{conv3} we obtain
\begin{equation*}
\limsup\limits_{m\to +\infty}\delta\int_{\Omega}(u_m-u)^2dx\le 0\,,
\end{equation*}
which gives, since $\delta>0$,
\begin{equation*}
\limsup\limits_{m\to +\infty}\int_{\Omega}(u_m-u)^2dx\le 0\,.
\end{equation*}
On the other hand,
\begin{equation*}
\liminf\limits_{m\to +\infty}\int_{\Omega}(u_m-u)^2dx\ge 0\,.
\end{equation*}
Then
\begin{equation*}
\lim\limits_{m\to +\infty}\int_{\Omega}(u_m-u)^2dx=0\,,
\end{equation*}
which proves the result.
\end{proof}
Now we are in the position to prove the result announced at the beginning of this section.

First, we notice that from the weak convergence in \eqref{_conv_debole_nuova} and the lower semi-continuity of the $H^2-$norm and the functional $\mathcal G$ defined by \eqref{funzionale_int} we obtain that
\begin{equation}\label{liminf_phi}
\begin{split}
&\liminf\limits_{t\to 0^+}\Vert\varphi_t(t)\Vert^2_{H^2(\mathbb T)}\ge\Vert\varphi_t(0)\Vert^2_{H^2(\mathbb T)}\,\\
&\liminf\limits_{t\to 0^+}\int_{\mathbb T}(\mu-2\phi_{x}(t))\vert\langle\partial_x\rangle^2\varphi_x(t)\vert^2\,dx\ge \int_{\mathbb T}(\mu-2\phi_{x}(0))\vert\langle\partial_x\rangle^2\varphi_x(0)\vert^2\,dx\,.
\end{split}
\end{equation}
In view of the convergence \eqref{conv_energia}, we can apply the result of Lemma \ref{successioni} to find that
\begin{equation}\label{lim_phi}
\begin{split}
&\lim\limits_{t\to 0^+}\Vert\varphi_t(t)\Vert^2_{H^2(\mathbb T)}=\Vert\varphi_t(0)\Vert^2_{H^2(\mathbb T)}\,\\
&\lim\limits_{t\to 0^+}\int_{\mathbb T}(\mu-2\phi_{x}(t))\vert\langle\partial_x\rangle^2\varphi_x(t)\vert^2\,dx= \int_{\mathbb T}(\mu-2\phi_{x}(0))\vert\langle\partial_x\rangle^2\varphi_x(0)\vert^2\,dx\,.
\end{split}
\end{equation}

The weak convergence $\varphi_t(t) \rightharpoonup\varphi_t(0)$ $w-H^2(\mathbb T)$ and the convergence of the norms $\lim\limits_{t\to 0^+}\Vert\varphi_t(t)\Vert^2_{H^2(\mathbb T)}=\Vert\varphi_t(0)\Vert^2_{H^2(\mathbb T)}$ (see \eqref{lim_phi}) imply immediately that $\varphi_t(t) \rightarrow\varphi_t(0)$ $s-H^2(\mathbb T)$ (see for instance \cite[Proposition III.30]{brezis}).  Since $\varphi$ has zero mean, in order to prove that $\varphi(t) \rightarrow\varphi(0)$ $s-H^3(\mathbb T)$, it remains only to prove that $\varphi_x(t) \rightarrow\varphi_x(0)$ $s-H^2(\mathbb T)$. This comes from the convergence
\begin{equation*}
\lim\limits_{t\to 0^+}\int_{\mathbb T}(\mu-2\phi_{x}(t))\vert\langle\partial_x\rangle^2\varphi_x(t)\vert^2\,dx= \int_{\mathbb T}(\mu-2\phi_{x}(0))\vert\langle\partial_x\rangle^2\varphi_x(0)\vert^2\,dx\,
\end{equation*}
as a simple applications of Lemma \ref{weak/strong}, where the role of $v_m$ and $u_m$ is here played by $\mu-2\phi_x(t)$ and $\langle\partial_x\rangle^2\varphi_x(t)$, respectively.
All the assumptions in Lemma \ref{weak/strong} are verified up to a subsequence.

\section{Some results from abstract functional analysis}\label{Lions-Magenes_sect}
In this section, we assume that $X$ and $Y$ are reflexive Banach spaces such that
\begin{equation*}
X\hookrightarrow Y\,,\quad\mbox{with continuous embedding}\,.
\end{equation*}
The space $W(X,Y)$ is defined as
\begin{equation}\label{W}
W(X,Y):=\left\{ u\in L^2(0,T; X)\,:\,\,u_t\in L^2(0,T; Y)\right\}\,.
\end{equation}
For every $\theta\in[0,1]$, let the {\it intermediate space} $\left[X,Y\right]_{\theta}$ be defined as in \cite{lionsmagenes1}.
\begin{lemma}\label{lemma1}(\cite{lionsmagenes3})
The following inclusion holds with continuous embedding:
\begin{equation*}
L^\infty(0,T; X)\cap C([0,T]; Y)\hookrightarrow C_w([0,T]; X)\,.
\end{equation*}
\end{lemma}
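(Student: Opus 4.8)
The plan is to establish the set inclusion by fixing an arbitrary functional $\ell\in X^*$ and showing that $t\mapsto\langle\ell,u(t)\rangle_{X^*,X}$ is continuous on $[0,T]$ for every $u\in L^\infty(0,T;X)\cap C([0,T];Y)$; the continuity of the embedding will then come for free from the uniform bound produced along the way. Throughout I would denote by $j:X\hookrightarrow Y$ the (injective, continuous) embedding and work with the representative of $u$ that is continuous into $Y$.

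The first and most delicate step is to upgrade the essential $X$-bound coming from $u\in L^\infty(0,T;X)$ to a genuinely pointwise statement. Setting $M:=\|u\|_{L^\infty(0,T;X)}$, the set $E$ of times $t$ with $u(t)\in X$ and $\|u(t)\|_X\le M$ has full measure, hence is dense in $[0,T]$. For an arbitrary $t_0\in[0,T]$ I would choose $t_n\in E$ with $t_n\to t_0$: by reflexivity of $X$ and the bound $\|u(t_n)\|_X\le M$, a subsequence converges weakly in $X$ to some $\xi$ with $\|\xi\|_X\le M$. Since $j$ is bounded and linear it is weak-to-weak continuous, so $u(t_n)\rightharpoonup j\xi$ weakly in $Y$, while $u(t_n)\to u(t_0)$ strongly in $Y$ by continuity into $Y$. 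Uniqueness of the weak limit in $Y$ forces $u(t_0)=j\xi$, whence $u(t)\in X$ with $\|u(t)\|_X\le M$ for \emph{every} $t\in[0,T]$. This is where reflexivity is genuinely used, and I expect it to be the main obstacle.

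The second ingredient is the density of $j^*(Y^*)$ in $X^*$, which requires only that $j$ be injective and $X$ reflexive (note that density of $X$ in $Y$ is \emph{not} needed, consistent with the hypotheses). Indeed, since $X$ is reflexive, a subspace of $X^*$ is dense iff its annihilator in $X^{**}=X$ is trivial; the annihilator of $j^*(Y^*)$ consists of those $x\in X$ with $\langle\eta,jx\rangle_{Y^*,Y}=0$ for all $\eta\in Y^*$, i.e.\ $jx=0$, i.e.\ $x=0$ by injectivity of $j$.

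Finally I would run the standard approximation argument for weak continuity. For $\ell=j^*\eta$ with $\eta\in Y^*$ one has $\langle\ell,u(t)\rangle_{X^*,X}=\langle\eta,u(t)\rangle_{Y^*,Y}$, which is continuous because $u\in C([0,T];Y)$. For a general $\ell\in X^*$ pick $\ell_n=j^*\eta_n\to\ell$ in $X^*$ and split
\[
|\langle\ell,u(t)-u(s)\rangle_{X^*,X}|\le 2M\,\|\ell-\ell_n\|_{X^*}+|\langle\eta_n,u(t)-u(s)\rangle_{Y^*,Y}|,
\]
where the pointwise bound $\|u(t)\|_X\le M$ from Step~1 is exactly what makes the first term small uniformly in $t,s$; choosing $n$ large and then $|t-s|$ small yields continuity of $t\mapsto\langle\ell,u(t)\rangle$, i.e.\ $u\in C_w([0,T];X)$. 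The continuity of the embedding is then immediate, since the argument shows $\sup_{t\in[0,T]}\|u(t)\|_X\le\|u\|_{L^\infty(0,T;X)}$.
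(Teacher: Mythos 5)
Your proof is correct, and since the paper does not prove this lemma itself but only cites Lions--Magenes, the relevant comparison is with the classical argument from that source: your three steps (upgrading the essential bound to a pointwise bound $\Vert u(t)\Vert_X\le M$ via reflexivity and uniqueness of weak limits in $Y$, norm-density of $j^*(Y^*)$ in $X^*$ from injectivity of $j$ plus reflexivity, and uniform approximation of $t\mapsto\langle\ell,u(t)\rangle$) are exactly the standard proof. Your remark that density of $X$ in $Y$ is not needed is a correct and slightly sharper observation than the usual textbook statement.
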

\begin{lemma}\label{lemma2}(\cite[Theorem 3.1]{lionsmagenes1})
For $Z=\left[X,Y\right]_{1/2}$ we have
\begin{equation*}
W(X,Y)\hookrightarrow C([0,T]; Z)\hookrightarrow C([0,T]; Y)\,\quad\mbox{with continuous embedding}\,.
\end{equation*}
\end{lemma}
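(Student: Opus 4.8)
The plan is to establish the two continuous inclusions separately. The second, $C([0,T];Z)\hookrightarrow C([0,T];Y)$, is immediate: since $Z=[X,Y]_{1/2}$ is an intermediate space between $X=[X,Y]_0$ and $Y=[X,Y]_1$, one has $Z\hookrightarrow Y$ together with the interpolation inequality $\|a\|_Z\le C\|a\|_X^{1/2}\|a\|_Y^{1/2}$; the embedding $Z\hookrightarrow Y$ transfers at once to the corresponding spaces of vector-valued continuous functions, giving $\|u\|_{C([0,T];Y)}\le C\|u\|_{C([0,T];Z)}$. All the substance is therefore in the first inclusion $W(X,Y)\hookrightarrow C([0,T];Z)$.

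For the case actually used in the paper, where $X\hookrightarrow Y$ are Hilbert spaces (here $X=H^3$, $Y=H^2$, $Z=H^{5/2}$), I would exploit the spectral description of the intermediate space. Let $\Lambda$ be the positive self-adjoint operator on $Y$ with $D(\Lambda)=X$ and $\|a\|_X\simeq\|\Lambda a\|_Y+\|a\|_Y$; then $Z=D(\Lambda^{1/2})$ and $\|a\|_Z\simeq\|\Lambda^{1/2}a\|_Y+\|a\|_Y$. For $u\in W(X,Y)$ one has $u_t,\ \Lambda u\in L^2(0,T;Y)$, so the scalar function $t\mapsto\|\Lambda^{1/2}u(t)\|_Y^2$ is absolutely continuous with
\begin{equation*}
\frac{d}{dt}\|\Lambda^{1/2}u(t)\|_Y^2=2\,\mathrm{Re}\,(u_t(t),\Lambda u(t))_Y\in L^1(0,T)\,,
\end{equation*}
and likewise $t\mapsto\|u(t)\|_Y^2$ is continuous because $u\in H^1(0,T;Y)\hookrightarrow C([0,T];Y)$. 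Integrating the bound $\big|\tfrac{d}{dt}\|\Lambda^{1/2}u\|_Y^2\big|\le 2\|u_t\|_Y\|\Lambda u\|_Y$ from $s$ to $t$, then averaging the initial term $\|\Lambda^{1/2}u(s)\|_Y^2$ over $s\in(0,T)$, gives the uniform estimate $\sup_{t}\|u(t)\|_Z^2\le C\|u\|_{W(X,Y)}^2$, which is the continuity of the embedding.

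To pass from boundedness to genuine time-continuity in $Z$, I would combine the two facts just obtained: $t\mapsto\|u(t)\|_Z^2$ is continuous (being, up to equivalence of norms, the sum of two absolutely continuous scalar functions), while $u\in C([0,T];Y)$ and $u$ is bounded in $Z$, so $u(t)\rightharpoonup u(s)$ weakly in $Z$ as $t\to s$. In the Hilbert space $Z$, weak convergence together with convergence of the norms forces strong convergence, whence $u\in C([0,T];Z)$, and the estimate above exhibits the embedding as continuous. (The differentiation formula is justified for general $u\in W(X,Y)$ by a routine approximation with smooth functions.)

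The hard part is the identification of the exact limiting space: one must certify that the trace $u(t_0)$ lands precisely in $[X,Y]_{1/2}$, and not merely in some larger interpolation space, which is exactly the content of the trace method of interpolation. This is automatic in the operator picture above, but in the stated reflexive Banach generality the operator $\Lambda^{1/2}$ is unavailable, and I would instead invoke the characterization of $[X,Y]_{1/2}$ as the space of traces $v(0)$ of functions $v$ with $v\in L^2(0,\infty;X)$ and $v'\in L^2(0,\infty;Y)$: extending $u$ to the half-line with control of the $W$-norm and using translation invariance then reads off both the uniform estimate and the continuity directly from this characterization, recovering \cite[Theorem 3.1]{lionsmagenes1}.
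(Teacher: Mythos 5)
Your proposal is correct, but the comparison here is lopsided: the paper offers no proof of this lemma at all --- it is quoted verbatim from Lions--Magenes \cite[Theorem 3.1]{lionsmagenes1}, and Appendix \ref{Lions-Magenes_sect} merely records the statement for later use. What you have written is essentially a reconstruction of the classical Lions--Magenes argument in the Hilbert case: the positive self-adjoint operator $\Lambda$ with $D(\Lambda)=X$, the identification $[X,Y]_{1/2}=D(\Lambda^{1/2})$, the differential identity $\frac{d}{dt}\Vert\Lambda^{1/2}u\Vert_Y^2=2\,\mathrm{Re}\,(u_t,\Lambda u)_Y$ bounded via Cauchy--Schwarz, the averaging in $s$ (using $\Vert\Lambda^{1/2}u\Vert_{L^2(0,T;Y)}^2\le\Vert u\Vert_{L^2(Y)}\Vert\Lambda u\Vert_{L^2(Y)}$ to control the averaged term), and density of smooth $X$-valued functions to justify the computation --- all of this is sound, and your route to strong continuity (absolute continuity of $t\mapsto\Vert u(t)\Vert_Z^2$ plus weak $Z$-continuity extracted from boundedness in $Z$ and continuity in $Y$, then weak convergence plus norm convergence in the Hilbert space $Z$) is a legitimate variant; one could equally conclude by applying the uniform estimate to differences of smooth approximants, getting a Cauchy sequence in $C([0,T];Z)$ directly. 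Since the application in the paper is only to $X=H^3(\mathbb T)$, $Y=H^2(\mathbb T)$, $Z=H^{5/2}(\mathbb T)$, your Hilbert argument fully covers what is needed.

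One imprecision worth flagging in your final paragraph: for a general reflexive Banach couple, the trace space of $\{v\in L^2(0,\infty;X):\,v'\in L^2(0,\infty;Y)\}$ is the \emph{real} interpolation space $(X,Y)_{1/2,2}$, not the complex space $[X,Y]_{1/2}$; the two coincide for Hilbert couples (both being $D(\Lambda^{1/2})$) but differ in general, so the trace characterization does not by itself ``recover'' the theorem for complex interpolation in Banach generality. This is harmless here, because Lions--Magenes define the intermediate spaces $[X,Y]_\theta$ precisely through the operator $\Lambda$ in the Hilbert setting --- which is the sense in which the paper's appendix invokes them --- but as stated your last step conflates two interpolation functors that are only provably equal under the Hilbert hypothesis.
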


\vspace{.20cm}
\noindent
\paragraph{\bf Acknowledgements}
The authors should like to express their thanks to Professors Alessandro Giacomini and Riccarda Rossi for the helpful discussions and their valuable comments.

\end{document}